\newtheorem{theoreme}{Theorem}[subsection]
\newtheorem{theo}{Theorem}[subsection]
\newtheorem{cor}{Corollary}[subsection]
\newtheorem{conj}{Conjecture}
\newtheorem{prop}[theo]{Proposition}
\newtheorem{lemme}[theo]{Lemma}
\theoremstyle{definition}
\newtheorem{defi}[theo]{Definition}
\theoremstyle{remark}
\newtheorem{rem}[theo]{Remark}
\newtheorem{ex}[theo]{Example}
\DeclareMathOperator{\supp}{supp}
\DeclareMathOperator{\vect}{Vect}
\DeclareMathOperator{\id}{Id}
\newcommand{\Uqg}{\mathcal{U}_{q}(\mathfrak{g})}
\newcommand{\Uqb}{\mathcal{U}_{q}(\mathfrak{b})}
\newcommand{\Uqbm}{\mathcal{U}_{q}^{-}(\mathfrak{b})}
\newcommand{\Uqbp}{\mathcal{U}_{q}^{\geq 0}(\mathfrak{b})}
\newcommand{\Uqbh}{\mathcal{U}_{q}(\mathfrak{b})^{0}}
\newcommand{\cO}{\mathcal{O}}
\newcommand{\Op}{\mathcal{O}^{+}}
\newcommand{\Om}{\mathcal{O}^{-}}
\newcommand{\OpZ}{\mathcal{O}^{+}_{\mathbb{Z}}}
\newcommand{\OmZ}{\mathcal{O}^{-}_{\mathbb{Z}}}
\newcommand{\PfN}{\mathcal{P}_{f}(\mathbb{N})}
\newcommand{\Plr}{P_{\ell}^{\mathfrak{r}}}
\numberwithin{equation}{section}
\author{Léa Bittmann}
\title{Asymptotics of standard modules of quantum affine algebras}
\date{}
\begin{document}

\maketitle

\begin{abstract} 
We introduce a sequence of $q$-characters of standard modules of a quantum affine algebra and we prove it has a limit as a formal power series. For $\mathfrak{g}=\hat{\mathfrak{sl}_{2}}$, we establish an explicit formula for the limit which enables us to construct corresponding asymptotical standard modules associated to each simple module in the category $\cO$ of a Borel subalgebra of the quantum affine algebra. Finally, we prove a decomposition formula for the limit formula into $q$-characters of simple modules in this category $\cO$.

\end{abstract}

\tableofcontents

	\section*{Introduction}

The quantized universal enveloping algebra of a finite-dimensional simple Lie algebra $\mathfrak{g}$ was introduced  independently by Drinfeld \citep{QGD} and Jimbo \citep{AqDAUg} in 1985. It is a $q$-deformation of the universal enveloping algebra $\mathcal{U}(\mathfrak{g})$, for $q$ a generic complex number, and it has a Hopf algebra structure.
If $\mathfrak{g}$ is an untwisted affine Kac-Moody algebra, then its universal enveloping algebra also admits a $q$-deformation, called the \textit{quantum affine algebra} $\Uqg$. The quantum affine algebra can also be obtained as a quantum affinization of the quantized universal enveloping algebra of the corresponding simple Lie algebra. Both processes (affinization then quantization and quantization then quantum affinization) commute (\citep{ANRY,BGA}). The quantum affine algebra has a presentation with the so-called "Drinfeld-Jimbo generators", and from this presentation one can define its Borel subalgebra $\Uqb \subset\Uqg$. 

Both the quantum affine algebra and the Borel algebra are Hopf algebras, thus their categories of finite-dimensional representations are monoidal categories. 
In \citep{QCRQAA}, Frenkel and Reshetikhin defined the $q$-character $\chi_{q}$, which is an injective ring morphism on the Grothendieck ring of the category of finite-dimensional representations of $\Uqg$, mimicking the usual character morphism. For every finite-dimensional representation $V$ of $\Uqg$, one can consider its $q$-character $\chi_{q}(V)$. It describes the decomposition of $V$ into $\ell$-weight spaces the action of the $\ell$-Cartan subalgebra of $\Uqg$. These $\ell$-weight are more suited than the weights spaces to the study of finite-dimensional $\Uqg$-modules (for detailed results, see \citep{CqC} for example).

Hernandez and Jimbo introduced in \citep{ARDRF} a category $\cO$ of representations of the Borel algebra $\Uqb$. Objects in this category $\cO$ are sums of their $\ell$-weight spaces and have finite-dimensional weight spaces, with weights in a finite union of certain cones. The category $\cO$ contains the finite-dimensional representations, as well as some other remarkable representations, called \textit{prefundamental representations}. For $\mathfrak{g}= \hat{\mathfrak{sl}_{2}}$, the latter appeared naturally, under the name $q$\textit{-oscillator representations} in Conformal Field Theory (for example in \citep{ISCFT}).
In general, the prefundamental representations are constructed as limits of particular sequences of simple finite-dimensional modules \citep{ARDRF}. Note that the prefundamental representations were used in \citep{BRSQIM} to prove the Frenkel-Reshetikhin's conjecture on the spectra of quantum integrable systems \citep{QCRQAA} (generalizing certain results of Baxter \citep{PF} on the spectra of the 6 and 8-vertex models).

The $q$-character morphism is a well-defined injective ring morphism, on the Grothendieck ring of the category $\cO$ (\citep{ARDRF}). 

In the category $\cO$, objects do not necessarily have finite lengths (see \citep[Lemma C.1]{HGS}, the tensor product of some prefundamental representations presents an infinite filtration of submodules), thus the category $\cO$ is not a Krull-Schmidt category. However, as in the case of the classical category $\cO$ of representations of a Kac-Moody Lie algebra, the multiplicity of a simple module in a module is well-defined. As such, all the elements of the Grothendieck ring of $\cO$ can be written as (infinite) sums of classes of simple modules.

There is a second basis of the Grothendieck ring of finite-dimensional $\Uqb$-modules formed by the \textit{standard modules}, introduced in \citep{QVFDR}. Both simple and standard modules are indexed by the same set: the monomials $m$ in the ring $\mathbb{Z}[Y_{i,a}]_{i\in I, a \in \mathbb{C}^{\times}}$. In \citep{QVFDR}, Nakajima showed that for the simply laced  types, the multiplicities of the simple modules in the standard modules can be realized as dimensions of certain varieties. Nakajima showed that the transition matrix between theses two bases is upper triangular: 
\begin{equation}\label{KL}
[M(m)] = [L(m)] + \sum_{m'< m} P_{m',m}[L(m')],
\end{equation}
for some partial order $\leq$ on the monomials in $\mathbb{Z}[Y_{i,a}]_{i\in I, a \in \mathbb{C}^{\times}}$, the exact definition of the order is recalled in (\ref{Nakajimaorder}), in the proof of Theorem \ref{proplimqchar}.

Moreover, the coefficients $P_{m',m} \in \mathbb{Z}$ are non-negative. Moreover, using $t$-deformations of the $q$-characters, Nakajima showed (\citep{QVtA}) that the coefficients $P_{m',m}$ can be computed as the evaluation at $t=1$ of some polynomials (analogs of the Kazhdan-Lusztig polynomials for Weyl groups). With this type of formula, one can hope to deduce from the $q$-characters of the standard modules the $q$-characters of the simple modules, which are not known in the general case. 

One would want to have the same type of decomposition in the category $\cO$, in which the simple modules also form a basis. For that, the first step is to build analogs of the standard modules corresponding to each simple module in the category $\cO$.

In order to do that, let us recall that the $q$-characters of the prefundamental representations can be obtained as limits (as formal power series) of sequences of $q$-characters of simple modules. 

In the present paper, we establish that a limit of a particular sequence of $q$-characters of standard modules exist. We conjecture that this limit is the $q$-character of a certain asymptotical standard module. In the case $\mathfrak{g}=\hat{\mathfrak{sl}_{2}}$ we give an explicit formula for the limit and we prove this conjecture. In this case, the simple modules are known for certain natural monoidal subcategories (see \citep[Theorem 7.9]{CABS}), thus we can work the other way and deduce from the simple modules information about the standard modules.
Furthermore, will we show at the end of the paper that the $q$-characters of these asymptotical standard modules satisfy a decomposition formula of the type (\ref{KL}).

This paper is organized as follows. The first section of this paper presents the background for our work. It gathers the definitions and already known properties of the objects we study. We start with the definitions of the quantum affine algebra and the Borel algebra. Section \ref{sectrep} presents some results on the representations of the Borel algebra, specifically the category $\cO$ of Hernandez-Jimbo. Finally, we present the $q$-character theory for the category $\cO$.   

In Section \ref{sectlim}, we recall the definitions of standard modules for finite-dimensional representations, and study more precisely their $q$-characters, which will be the base for the construction of the asymptotical standard modules. We prove the convergence, as a formal power series, of some sequences of normalized $q$-character of standard modules (Theorem \ref{proplimqchar}). Then we conjecture (Conjecture \ref{conjqchar}) that these limits are $q$-characters of certain asymptotical standard modules.

From Section \ref{sectgenstand} on, we focus on the case where $\mathfrak{g}=\hat{\mathfrak{sl}_{2}}$. Section \ref{sectgenstand} tackles the technical part of the construction of the standard modules, with the two main theorems. First, we build a $\Uqbp$-module $T$, with finite-dimensional $\ell$-weight spaces, such that its normalized $q$-character is the limit obtained from the result of Theorem \ref{proplimqchar} (Theorem \ref{theoPsi1}). Then we induce from $T$ a $\Uqb$-module  with finite-dimensional $\ell$-weight spaces and the same $q$-character as $T$ (Theorem \ref{theoPsi1'}).

Finally, in Section \ref{sectdecomp} we present some results for the decomposition of the $q$-characters of our asymptotical standard modules. The last theorem is a result of the type of (\ref{KL}): the limit obtained in Theorem \ref{proplimqchar}, which from Theorem \ref{theoPsi1'} can be realized as the $q$-character of a $\Uqb$-module, admits a decomposition into a sum of $q$-characters of simple modules in the category $\cO$. Moreover, the coefficients are non-negative integers (Theorem \ref{theodecomp}). 
 \vspace{0.4cm}
 
\textit{The author is supported by the European Research Council under the European Union's Framework Programme 
H2020 with ERC Grant Agreement number 647353 Qaffine.}

\section{Background}
	\subsection{The quantum affine algebra and its Borel subalgebra}	

Let us start by recalling the definitions of the two main algebras we study: the quantum affine algebra $\Uqg$ and its Borel subalgebra for the Drinfeld-Jimbo generators $\Uqb$. 	

			\subsubsection{Quantum affine algebra}
		
Let $\mathfrak{g}$ be an untwisted affine Kac-Moody algebra, with $C=(C_{i,j})_{0\leq i,j\leq n}$ its Cartan matrix. Let $\dot{\mathfrak{g}}$ be the associated finite-dimensional simple Lie algebra, let $I=\{1, \ldots, n\}$ be the vertices of its Dynkin diagram and $\dot{C}=(C_{i,j})_{i,j\in I}$ its Cartan matrix.

Let $(\alpha_{i})_{i\in I}$, $(\alpha_{i}^{\vee})_{i\in I}$ and $(\omega_{i})_{i\in I}$ be the simple roots, the simple coroots and the fundamental weights of $\dot{\mathfrak{g}}$, respectively. We use the usual lattices $Q= \bigoplus_{i\in I}\mathbb{Z}\alpha_{i}$, $Q^{+}= \bigoplus_{i\in I}\mathbb{N}\alpha_{i}$ and $P= \bigoplus_{i\in I}\mathbb{Z}\omega_{i}$. Let $P_{\mathbb{Q}}=P\otimes \mathbb{Q}$, endowed with the partial ordering: $\omega \leq \omega'$ if and only if $\omega' - \omega \in Q^{+}$. Let $D=\text{diag}(d_{0}, d_{1}, \ldots, d_{n})$ be the unique diagonal matrix such that $B=DC$ is symmetric and the $(d_{i})_{0\leq i \leq n}$ are relatively prime positive integers. 

Fix an non-zero complex number $q$, which is not a root of unity, and $h\in\mathbb{C}$ such that $q=e^{h}$. Then for all $r\in \mathbb{Q}, q^{r}:=e^{rh}$. Since $q$ is not a root of unity, for $r,s\in \mathbb{Q}$, we have $q^{r}=q^{s}$ if and only if $r=s$. We set $q_{i}:= q^{d_{i}}$, for $0\leq i \leq n$.

We use the following notations.
\begin{equation*}
\begin{array}{ccc}
[m]_{z}=\frac{z^{m}-z^{-m}}{z-z^{-1}},& [m]_{z}! = \prod_{j = 1}^{m}[j]_{z}, & \genfrac[]{0pt}{0}{r}{s}_{z} = \frac{[r]_{z}!}{[s]_{z}![r-s]_{z}!}
\end{array}
\end{equation*}

\begin{defi}
One defines the \textit{quantum affine algebra} $\Uqg$ as the $\mathbb{C}$-algebra generated by $e_{i},f_{i},k_{i}^{\pm 1}, 0\leq i \leq n$, together with the following relations, for $0\leq i,j\leq n$,
\begin{equation*}
\begin{gathered}
k_{i}k_{j}=k_{j}k_{i}, \quad \left[ e_{i},f_{j} \right]= \delta_{i,j}\frac{k_{i}-k_{i}^{-1}}{q_{i}-q_{i}^{-1}}, \\
k_{i}e_{j}k_{i}^{-1}=q_{i}^{C_{i,j}}e_{j}, \quad k_{i}f_{j}k_{i}^{-1}=q_{i}^{-C_{i,j}}e_{j}, \\
\sum_{r=0}^{1-C_{i,j}}(-1)^{r} e_{i}^{(1-C_{i,j}-r)}e_{j}e_{i}^{(r)}  =0,~ (i\neq j), \\
\sum_{r=0}^{1-C_{i,j}}(-1)^{r}f_{i}^{(1-C_{i,j}-r)}f_{j}f_{i}^{(r)} =0,~ (i\neq j) 
\end{gathered}
\end{equation*}
where $x_{i}^{(r)} = x_{i}^{r}/[r]_{q_{i}}!, (x_{i}=e_{i},f_{i})$.
\end{defi}

The algebra $\Uqg$ has another presentation, with the \textit{Drinfeld generators} (\citep{ANRY}, \citep{BGA})
\begin{equation*}
x_{i,r}^{\pm}(i\in I, r\in \mathbb{Z}), \quad \phi_{i,\pm m}^{\pm} (i\in I, m\geq 0),\quad k_{i}^{\pm 1} (i\in I),
\end{equation*}
and some relations we will not recall here, but which are also $q$-deformations of the Weyl and Serre relations.

\begin{ex}
For $\mathfrak{g}=\hat{\mathfrak{sl}_{2}}$, one has the following correspondence 
\begin{align*}
e_{1} &= x_{1,0}^{+}, \enskip f_{1} = x_{1,0}^{-}, \\
e_{0} &= k_{1}^{-1}x_{1,1}^{-}, \enskip f_{0} = x_{1,-1}^{+}k_{1}.
\end{align*}
\end{ex}

Let us introduce the generating series, for $i\in I$ 
\begin{equation*}
\phi_{i}^{\pm}(z) = \sum_{m\geq 0}\phi_{i,\pm m}^{\pm}z^{\pm m} = k_{i}^{\pm 1}\exp\left( \pm (q_{i}-q_{i}^{-1})\sum_{r>0}h_{i,\pm r}z^{\pm r}\right)\quad \in \mathcal{U}_{q}(\mathfrak{h})[z^{\pm 1}].
\end{equation*}
Thus, the $(\phi_{i,\pm m}^{\pm})_{i\in I, m \geq 0}$ and the $(k_{i}^{\pm 1},h_{i,\pm r})_{i\in I, r>0}$ generate the same subalgebra of $\Uqg$: the $\ell$-Cartan subalgebra $\Uqg^0$.

The $(h_{i,\pm r})_{i\in I, r>0}$ can be useful because their relations with the Drinfeld generators are simpler to write. For example, for all $i,j\in I$, $r\in \mathbb{Z}\setminus \{0\}$ and $m\in \mathbb{Z}$,
\begin{equation}\label{relhx}
[ h_{i,r}, x_{j,m}^\pm ] = \pm \frac{[r\dot{C},j]_q}{r}x_{j,r+m}^\pm.
\end{equation}

The quantum affine algebra has a triangular decomposition, associated to the Drinfeld generators: let $\Uqg^{\pm}$ be the subalgebra of $\Uqg$ generated by the $(x_{i,r}^{\pm})_{i\in I, r\in \mathbb{Z}}$. Then (\citep{BGA}),
\begin{equation}\label{trigdecompUqg}
\Uqg \simeq \Uqg^{-}\otimes \Uqg^{0}\otimes \Uqg^{+}.
\end{equation}

The algebra $\Uqg$ has a natural $Q$-grading, with, for all $i\in I, m \in \mathbb{Z}, r>0$,
\begin{equation}\label{Qgrad}
\deg(x_{i,m}^{\pm}) =\pm \alpha_{i}, \quad \deg(h_{i,r}) = \deg(k_{i}^{\pm 1}) = 0.
\end{equation}

		\subsubsection{The Borel subalgebra}

\begin{defi}
The \textit{Borel algebra} $\Uqb$ is the subalgebra of $\Uqg$ generated by the $e_{i}, k_{i}^{\pm 1}$, for $0\leq i\leq n$.
\end{defi}

Let $\Uqb^{\pm} = \Uqg^{\pm}\cap \Uqb$ and $\Uqb^{0} = \Uqg^{0}\cap \Uqb$, then \citep{BCP}
\begin{equation*}
\Uqb^{+} = \langle x_{i,m}^{+}\rangle_{i\in I, m\geq 0}, \quad \Uqb^{0} = \langle \phi_{i,r}^{+}, k_{i}^{\pm}\rangle_{i\in I, r> 0}.
\end{equation*}
\begin{rem}\label{remUqsl2m}
In general, such a nice description does not exist for $\Uqbm$, except when $\mathfrak{g} = \hat{\mathfrak{sl}}_2$. In that case, $\Uqbm$ is isomorphic to the algebra defined by the generators $(x_{1,m}^{-})_{ m\geq 1}$, together with the relations (\citep[Section 4.21]{LQG}), for all $m,l\geq 1$, $i,j \in I$
\begin{equation}\label{eqUqbmg}
x_{i,m+1}^\pm x_{j,l}^\pm -q^{\pm\dot{C}_{i,j}}x_{j,l}^\pm x_{i,m+1}^\pm = q^{\pm\dot{C}_{i,j}}x_{i,m}^\pm x_{j,l+1}^\pm - x_{j,l+1}^\pm x_{i,m}^\pm.
\end{equation}
\end{rem}

We also use the subalgebra $\Uqbp$:
\begin{equation}\label{defUqbp}
\Uqbp := \langle x_{i,m}^{+}, \phi_{i,r}^{+}, k_{i}^{\pm}\rangle_{i\in I, m \geq 0,r> 0}. 
\end{equation}

The triangular decomposition of (\ref{trigdecompUqg}) carries over (\citep{BGA}):
\begin{equation*}
\Uqb \simeq \Uqb^{-}\otimes \Uqb^{0}\otimes \Uqb^{+}.
\end{equation*}

From now on, we are going to consider representations of the Borel algebra $\Uqb$.
\begin{ex}\label{ex1dim}
The algebra $\Uqg$ has only one 1-dimensional representation (of type 1), but $\Uqb$ has an infinite family of one-dimensional representations, indexed by $P_{\mathbb{Q}}$: for each $\omega\in P_{\mathbb{Q}}$, $[\omega]$ denote the one-dimensional representation on which the $(e_{i})_{i\in I}$ act trivially and $k_{i}$ act by multiplication by $q_{i}^{\omega(\alpha_{i})}$.
\end{ex}

It may seem that by studying representations of $\Uqb$ we consider many more representations, but we will see that for finite-dimensional representations, the simple modules are essentially the same.

		\subsubsection{Hopf algebra structure}

The algebra $\Uqg$ has a Hopf algebra structure, where the coproduct and the antipode are given by, for $i\in \{0,\ldots,n\}$,
\begin{equation}\label{HopfUqg}
\begin{gathered}
\Delta(e_{i}) =e_{i}\otimes 1 + k_{i}\otimes e_{i},\\ \Delta(f_{i}) =f_{i}\otimes k_{i}^{-1} + 1\otimes f_{i}, \\ \Delta(k_{i}) = k_{i}\otimes k_{i},\quad S(k_{i}) = k_{i}^{-1} \\
S(e_{i}) = -k_{i}^{-1}e_{i},\quad S(f_{i}) = -f_{i}k_{i}. 
\end{gathered}
\end{equation}
	
With these coproducts and antipodes, the Borel algebra $\Uqb$ is a Hopf subalgebra of $\Uqg$. 

We have the following result for the coproducts in $\Uqb$, where the $Q$-grading follows from the $Q$-grading on $\Uqg$ defined in (\ref{Qgrad}).
\begin{prop}\label{propappcoprod}\citep[Proposition 7.1]{RMAQ} For $r >0$ and $m\in \mathbb{Z}$, 
\begin{equation*}
\Delta(h_{i,r}) \in h_{i,r}\otimes 1 + 1\otimes h_{i,r} + \tilde{\mathcal{U}}_{q}^{-}(\mathfrak{b})\otimes \tilde{\mathcal{U}}_{q}^{+}(\mathfrak{b}),
\end{equation*}
\begin{equation*}
\Delta(x_{i,m}^{+}) \in x_{i,m}^{+}\otimes 1 + \Uqb\otimes (\Uqb X^{+}),
\end{equation*}
where $\tilde{\mathcal{U}}_{q}^{+}(\mathfrak{b})$ (resp. $\tilde{\mathcal{U}}_{q}^{-}(\mathfrak{b})$) is the subalgebra of $\Uqb$ consisting of elements of positive (resp. negative) $Q$-degree, and $X^{+} = \sum_{i\in I,m\in \mathbb{Z}}\mathbb{C}x_{i,m}^{+} \subset \mathcal{U}_{q}^{+}(\mathfrak{b})$.
\end{prop}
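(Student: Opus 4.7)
My approach would be to bootstrap from the explicit coproducts on the Drinfeld--Jimbo generators given in (\ref{HopfUqg}), exploiting the fact that every positive Drinfeld generator $x_{i,m}^{+}$ (for $i \in I$, $m \geq 0$) can be obtained from $e_{i} = x_{i,0}^{+}$ by iterated commutators with the imaginary generators $h_{i,1}$. The central tool is the Drinfeld relation (\ref{relhx}), which for $j=i$ and $r=1$ reads $x_{i,m+1}^{+} = \tfrac{1}{[2]_{q_{i}}}\,[h_{i,1},\, x_{i,m}^{+}]$. So once I know $\Delta(h_{i,1})$ and $\Delta(e_{i}) = e_{i}\otimes 1 + k_{i}\otimes e_{i}$, I can compute $\Delta(x_{i,m}^{+})$ for all $m \geq 0$ inductively by applying $\Delta$ as a ring homomorphism to this commutator.

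The main obstacle is establishing the first statement at the base case, namely identifying the form of $\Delta(\phi_{i,1}^{+})$, after which the result for $h_{i,1}$ (and higher $h_{i,r}$) drops out by taking formal logarithms of the generating series $\phi_{i}^{+}(z)$. Since $\phi_{i,1}^{+}$ is an imaginary-root Drinfeld generator, its expression in Drinfeld--Jimbo letters is combinatorially involved; one either computes directly in low rank and extends (relying on the uniformity of the Drinfeld presentation), or invokes a structural result such as Damiani's explicit formula for the universal $R$-matrix, to obtain $\Delta(\phi_{i,1}^{+}) \in \phi_{i,1}^{+}\otimes 1 + k_{i}\otimes \phi_{i,1}^{+} + \tilde{\mathcal{U}}_{q}^{-}(\mathfrak{b})\otimes \tilde{\mathcal{U}}_{q}^{+}(\mathfrak{b})$. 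Higher $h_{i,r}$ follow either by the logarithmic expansion of $\phi_{i}^{+}(z)$ or by a parallel induction using that $\Delta$ is an algebra map and that $h_{i,r}$ can itself be expressed via bracketing operations on the $x_{j,k}^{\pm}$.

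For the inductive step on $\Delta(x_{i,m}^{+})$, assume $\Delta(x_{i,m}^{+}) = x_{i,m}^{+}\otimes 1 + R_{m}$ with $R_{m} \in \Uqb \otimes (\Uqb X^{+})$, and expand $[\Delta(h_{i,1}), \Delta(x_{i,m}^{+})]$. The leading piece $[h_{i,1}\otimes 1,\, x_{i,m}^{+}\otimes 1] = [2]_{q_{i}}\, x_{i,m+1}^{+}\otimes 1$ produces the desired top term; every other contribution either already has its right tensor slot inside $\Uqb X^{+}$ (coming from $R_{m}$), or acquires an $X^{+}$ factor upon expanding $\tilde{\mathcal{U}}_{q}^{+}(\mathfrak{b})$, which by the triangular decomposition $\Uqb \simeq \Uqb^{-}\otimes \Uqb^{0}\otimes \Uqb^{+}$ is contained in $\Uqb X^{+}$. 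The $Q$-grading from (\ref{Qgrad}) is what pairs the positive and negative degrees in the two tensor slots and ensures the error terms split as claimed; the commutator manipulation preserves the grading throughout, so no unwanted terms can escape the subspaces $\tilde{\mathcal{U}}_{q}^{-}(\mathfrak{b})\otimes \tilde{\mathcal{U}}_{q}^{+}(\mathfrak{b})$ and $\Uqb\otimes \Uqb X^{+}$.
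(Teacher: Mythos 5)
First, a point of comparison: the paper does not prove this proposition at all --- it is imported verbatim from \citep[Proposition 7.1]{RMAQ}, so there is no internal argument to measure your attempt against. Judged on its own, your sketch is a plausible roadmap for the second formula but has a genuine gap at the first, and the first is both the harder half and a prerequisite for your induction. Your treatment of $\Delta(h_{i,r})$ never materializes: ``compute in low rank and extend by uniformity'' is not an argument, and ``invoke Damiani's formula for the universal $R$-matrix'' amounts to citing the very source the proposition is quoted from, which defeats the purpose of a reconstruction. Moreover, even granting the base case $r=1$, your scheme gives no access to $h_{i,r}$ for $r\geq 2$: the $h_{i,r}$ commute with one another, so they cannot be generated from $h_{i,1}$ by bracketing within the Cartan part, and passing through the logarithm of $\phi_{i}^{+}(z)$ requires control of $\Delta(\phi_{i,m}^{+})$ for \emph{all} $m$ --- which is exactly the content being assumed. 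The usual route is to express $\phi_{i,r}^{+}$ via $[x_{i,r}^{+},f_{i}]$ and compute in $\Uqg\otimes\Uqg$ using $\Delta(f_{i})$ from (\ref{HopfUqg}), then isolate the degree-zero component; some such computation is unavoidable and is the actual substance of the first statement, since the $Q$-grading alone does not rule out components in $\tilde{\mathcal{U}}_{q}^{+}(\mathfrak{b})\otimes\tilde{\mathcal{U}}_{q}^{-}(\mathfrak{b})$ (the Borel algebra does contain elements of negative $Q$-degree, e.g.\ $e_{0}=k_{1}^{-1}x_{1,1}^{-}$).

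Two secondary issues. The statement is asserted for all $m\in\mathbb{Z}$, but your ladder $x_{i,m+1}^{+}=\tfrac{1}{[2]_{q_{i}}}[h_{i,1},x_{i,m}^{+}]$ starting from $e_{i}=x_{i,0}^{+}$ only climbs upward and so only reaches $m\geq 0$; the modes $m<0$ would require $\Delta(h_{i,-1})$, which lives in the opposite Borel and needs a separate argument (only $m\geq 0$ is used in this paper, but the claim as stated is broader). Second, your inductive step is essentially sound but glosses over why the error space is stable: you need that $\Uqb X^{+}$ is a left ideal, that $\tilde{\mathcal{U}}_{q}^{+}(\mathfrak{b})\subset\Uqb X^{+}$ (which does follow from the graded triangular decomposition, as you say), and additionally that $[h_{i,1},\Uqb X^{+}]\subset\Uqb X^{+}$ and $(\Uqb X^{+})\cdot\tilde{\mathcal{U}}_{q}^{+}(\mathfrak{b})\subset\Uqb X^{+}$, since commutators place factors on both sides of the $X^{+}$ letter. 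These are all true and easy from (\ref{relhx}), but they are the points where ``no unwanted terms can escape'' actually needs checking.
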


These relations of "approximate coproducts" are going to be crucial in the definition of the asymptotical standard modules in Section \ref{sectgendecomp}.

	\subsection{Representations of the Borel algebra}\label{sectrep}
	
In this Section, we recall some results on the representations of $\Uqb$. First of all, we recall the notion of $\ell$-weights and highest $\ell$-weight modules. These notions are at the center of the study of $\Uqb$-modules, as are weights and highest weight modules in the study of representations of semi-simple Lie algebras. Then, we cite some results on the finite-dimensional representations of the Borel algebra. And finally, we recall Hernandez-Jimbo's category $\cO$ for the representations of $\Uqb$.

			\subsubsection{Highest $\ell$-weight modules}\label{HlWM}

Let $V$ be a $\Uqb$-module and $\omega\in P_{\mathbb{Q}}$ a weight. One defines the weight space of $V$ of weight $\omega$ by 
\begin{equation*}
V_{\omega} := \lbrace v\in V \mid k_{i}v=q_{i}^{\omega(\alpha_{i}^{\vee})}v, 0\leq i \leq n \rbrace.
\end{equation*}
The vector space $V$ is said to be \textit{Cartan diagonalizable} if $V=\bigoplus_{\omega\in P_{\mathbb{Q}}}V_{\omega}$.
\begin{defi}
A series $\pmb\Psi = (\psi_{i,m})_{i\in I,m\geq 0}$ of complex numbers, such that $\psi_{i,0}\in q_{i}^{\mathbb{Q}}$ for all $i\in I$ is called an $\ell$\textit{-weight}. The set of $\ell$-weights is denoted by $P_{\ell}$. One identifies the $\ell$-weight $\pmb\Psi$ to its generating series:
\begin{equation*}
\begin{array}{cc}
\pmb\Psi = (\psi_{i}(z))_{i\in I}, & \psi_{i}(z)=\sum_{m\geq 0}\psi_{i,m}z^{m}.
\end{array}
\end{equation*}
\end{defi}
The sets $P_{\mathbb{Q}}$ and $P_{\ell}$ have group structures (the elements of $P_{\ell}$ are invertible formal series) and one has a surjective group morphism $\varpi : P_{\ell} \to P_{\mathbb{Q}}$ which satisfies $\psi_{i}(0)=q_{i}^{\varpi(\pmb\Psi)(\alpha_{i}^{\vee})}$. Let $V$ be a $\Uqb$-module and $\pmb\Psi\in P_{\ell}$ an $\ell$-weight. One defines the $\ell$-weight space of $V$ of $\ell$-weight $\pmb\Psi$ by
\begin{equation*}
V_{\pmb\Psi} := \lbrace v \in V \mid \exists p\geq 0,\forall i \in I, \forall m\geq 0, (\phi_{i,m}^{+} - \psi_{i,m})^{p}v = 0 \rbrace.
\end{equation*} 
A non-zero vector $v\in V$ which belongs to an $\ell$-weight space is called an \textit{$\ell$-weight vector}.
\begin{rem}
As $\phi_{i,0}^{+}=k_{i}$, one has $V_{\pmb\Psi} \subset V_{\varpi(\pmb\Psi)}$.
\end{rem}

\begin{defi}
Let $V$ be a $\Uqb$-module. It is said to be \textit{of highest} $\ell$\textit{-weight} $\pmb\Psi\in P_{\ell}$ if there is $v\in V$ such that $V=\Uqb v$, 
\begin{equation*}
e_{i}v= 0, \forall i \in I \quad \text{ and } \quad \phi_{i,m}^{+}v=\psi_{i,m}v,~\forall i\in I,m \geq 0.
\end{equation*}
In that case, the $\ell$-weight $\pmb\Psi$ is entirely determined by $V$, it is called the $\ell$-weight of $V$, and $v$ is the highest $\ell$-weight vector of $V$.
\end{defi}
\begin{prop}\citep{ARDRF}
For all $\pmb\Psi\in P_{\ell}$ there is, up to isomorphism, a unique simple highest $\ell$-weight module of $\ell$-weight $\pmb\Psi$, denote it by $L(\pmb\Psi)$.
\end{prop}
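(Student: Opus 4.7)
The plan is to imitate the classical Verma-module construction, using $\Uqbp$ in place of a Borel subalgebra.

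First I would build a universal highest $\ell$-weight module. Given $\pmb\Psi = (\psi_i(z))_{i \in I} \in P_\ell$, define a one-dimensional $\Uqbp$-module $\mathbb{C}v_{\pmb\Psi}$ by declaring
\begin{equation*}
x_{i,m}^{+}\cdot v_{\pmb\Psi} = 0, \qquad k_i^{\pm 1}\cdot v_{\pmb\Psi} = q_i^{\pm\varpi(\pmb\Psi)(\alpha_i^\vee)} v_{\pmb\Psi}, \qquad \phi_{i,m}^{+}\cdot v_{\pmb\Psi} = \psi_{i,m} v_{\pmb\Psi}.
\end{equation*}
The key thing to check is that this is well defined, i.e. that the defining relations of $\Uqbp$ are respected: relations purely among the $\phi^+, k^{\pm 1}$ hold because they act as commuting scalars; relations of the form $[h_{i,r},x_{j,m}^+] = \pm \frac{[r\dot C_{i,j}]_q}{r}\, x_{j,r+m}^+$ and the $q$-Serre relations have both sides vanish on $v_{\pmb\Psi}$ since all $x_{i,m}^+$ do; and the compatibility of $\phi_{i,0}^+$ with $k_i$ follows from the identification $\psi_i(0) = q_i^{\varpi(\pmb\Psi)(\alpha_i^\vee)}$. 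Then induce
\begin{equation*}
M(\pmb\Psi) := \Uqb \otimes_{\Uqbp} \mathbb{C}v_{\pmb\Psi}.
\end{equation*}

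Next I would analyze the weight structure of $M(\pmb\Psi)$. By the triangular decomposition $\Uqb \simeq \Uqbm \otimes \Uqb^0 \otimes \Uqb^+$, as a vector space $M(\pmb\Psi) \simeq \Uqbm \otimes \mathbb{C}v_{\pmb\Psi}$. Because $\Uqbm$ is $Q^+$-graded in \emph{negative} degrees (lowering the weight by elements of $Q^+$), $M(\pmb\Psi)$ decomposes as a direct sum of weight spaces $M(\pmb\Psi)_\omega$ for $\omega \leq \varpi(\pmb\Psi)$, with the top weight space $M(\pmb\Psi)_{\varpi(\pmb\Psi)} = \mathbb{C}v_{\pmb\Psi}$ being one-dimensional. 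Any proper submodule $N \subset M(\pmb\Psi)$ cannot contain $v_{\pmb\Psi}$ (else $N = M(\pmb\Psi)$), so $N \subset \bigoplus_{\omega < \varpi(\pmb\Psi)} M(\pmb\Psi)_\omega$. The sum $N(\pmb\Psi)$ of all proper submodules is therefore itself proper, hence the unique maximal proper submodule, and $L(\pmb\Psi) := M(\pmb\Psi)/N(\pmb\Psi)$ is simple and highest $\ell$-weight of $\ell$-weight $\pmb\Psi$.

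For uniqueness, if $V$ is any simple $\Uqb$-module of highest $\ell$-weight $\pmb\Psi$ with highest $\ell$-weight vector $v$, the universal property of induction yields a surjective $\Uqb$-linear map $M(\pmb\Psi) \twoheadrightarrow V$ sending $v_{\pmb\Psi} \mapsto v$; its kernel is a proper submodule, hence contained in $N(\pmb\Psi)$, and by simplicity of $V$ this kernel must equal $N(\pmb\Psi)$. Thus $V \simeq L(\pmb\Psi)$.

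The main subtlety is the first step: verifying that $\mathbb{C}v_{\pmb\Psi}$ is genuinely a $\Uqbp$-module for an \emph{arbitrary} series $\pmb\Psi \in P_\ell$ (no polynomiality or Drinfeld-polynomial constraint is imposed, unlike in the finite-dimensional setting), which relies on the fact that the generators $x_{i,m}^+$ act as zero and the $\phi^+$-$\phi^+$ relations are purely commutative. Once that one-dimensional representation is in hand, the rest of the argument is formally parallel to the classical Verma-module construction.
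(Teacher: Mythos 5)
Your proof is correct and follows exactly the route the paper relies on: the paper gives no argument of its own but cites \citep{ARDRF}, where the result is established via precisely this Verma-type construction (the ``Verma modules associated to the $\ell$-weights'' of \citep[Section 3.1]{ARDRF}, which the paper itself mentions in Remark \ref{remqchar}). The only step you leave implicit is that a highest $\ell$-weight vector $v$ (defined by $e_iv=0$ for $i\in I$ and $\phi_{i,m}^+v=\psi_{i,m}v$) is in fact killed by all $x_{i,m}^{+}$, $m\geq 0$, which follows from the relation (\ref{relhx}) and is needed for the universal property to apply in the uniqueness argument.
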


\begin{ex}\label{ex1dim'}
For $\omega\in P_{\mathbb{Q}}$, the one-dimensional representation defined in Example \ref{ex1dim} is the simple representation $[\omega] = L(\pmb\Psi_{\omega})$, with $(\pmb\Psi_{\omega})_{i}(z) = q_{i}^{\omega(\alpha_{i}^{\wedge})}$, for all $i\in I$.
\end{ex}

Let us define some particular simple modules.
\begin{defi}
For all $i\in I$ and $a\in \mathbb{C}^{\times}$, define the \textit{fundamental representation} $V_{i,a}$ as the simple module $L(Y_{i,a})$, where $Y_{i,a}(z)_{j} = \left\lbrace\begin{array}{ll}
										q_{i}\frac{1-aq_{i}^{-1}z}{1-aq_{i}z} & \text{ if } j=i \\
										1 & \text{ if } j\neq i
							\end{array}\right.$.
\end{defi}
If we generalize this definition:
\begin{defi}
For $i\in I, a\in \mathbb{C}^{\times}$ and $k\geq 0$, the \textit{Kirillov-Reshetikhin module} (or KR-module) $W_{k,a}^{(i)}$ is the simple $\Uqb$-module,
\begin{equation*}
W_{k,a}^{(i)}= L(Y_{i,a}Y_{i,aq_{i}^{2}}\cdots Y_{i,aq_{i}^{2(k-1)}}).
\end{equation*}
\end{defi}
We are going to see in the next section that these are finite-dimensional representations.

Let us define another family of $\ell$-weights. For $i\in I$ and $a\in \mathbb{C}^{\times}$, let $\pmb\Psi_{i,a}^{\pm 1}$ be the $\ell$-weight satisfying
\begin{equation*}
(\psi_{i,a}^{\pm 1})_{j}(z) =  \left\lbrace \begin{array}{ll}
(1-az)^{\pm 1} & \text{ if } i=j,\\
1 & \text{ if not}.
\end{array}\right.
\end{equation*}
Then,
\begin{defi}
For $i\in I$ and $a\in \mathbb{C}^{\times}$, define
\begin{equation*}
L_{i,a}^{\pm} := L(\pmb\Psi_{i,a}^{\pm 1}).
\end{equation*}
The modules $L_{i,a}^{+}$ (resp. $L_{i,a}^{-}$) are the \textit{positives prefundamental representations} (resp. \textit{negative prefundamental representations}).
\end{defi}
The construction of the $(L_{i,a}^{\pm})_{i\in I, a\in \mathbb{C}^{\times}}$ is detailed in \citep{ARDRF}. It is an asymptotical construction, in particular, they are infinite dimensional representations. 

For all $\ell$-weight $\pmb\Psi$, one can consider the \textit{normalized} $\ell$-weight 
\begin{equation}\label{normlweight}
\tilde{\pmb\Psi} = (\varpi(\pmb\Psi))^{-1}\pmb\Psi,
\end{equation}
which is an $\ell$-weight of weight $0$. For example, for $i\in I, a\in \mathbb{C}^{\times}$, $\tilde{Y}_{i,a} = \pmb\Psi_{i,aq_{i}^{-1}}\left(\pmb\Psi_{i,aq_{i}}\right)^{-1}$.

			\subsubsection{Finite-dimensional representations}

Let $\mathscr{C}$ be the category of finite-dimensional $\Uqb$-modules. The $([\omega])_{\omega\in P_{\mathbb{Q}}}$ and the Kirillov-Reshetikhin modules are examples of finite dimensional simple $\Uqb$-modules. 

As stated before, these are not so different from the $\Uqg$-modules. In particular, one has 
\begin{prop}\citep[References for Proposition 3.5]{ARDRF}
Let $V$ be a simple finite-dimensional $\Uqg$-module. Then $V$ is simple as a $\Uqb$-module.
\end{prop}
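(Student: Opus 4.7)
The plan is to use the Chari--Pressley characterization of finite-dimensional simple $\Uqg$-modules of type $1$ as highest $\ell$-weight modules in the Drinfeld presentation. Concretely, there exists $v\in V$, unique up to scalar, such that $x_{i,m}^{+}v = 0$ for all $i\in I$ and $m\in\mathbb{Z}$, $v$ is an $\ell$-weight vector of $\ell$-weight $\pmb\Psi$, and $V = \Uqg v$. Since $\Uqb^{+}$ is generated by the $x_{i,m}^{+}$ with $m\geq 0$, and $\Uqb^{0}\subseteq \Uqg^{0}$, the same vector $v$ is a highest $\ell$-weight vector for the restricted $\Uqb$-structure on $V$, with the same $\ell$-weight $\pmb\Psi$. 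The proposition will then reduce to showing $\Uqb v = V$, together with the fact that any non-zero $\Uqb$-submodule $W\subseteq V$ must contain $v$.

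The core technical step is to prove $\Uqb v = V$, equivalently $\Uqg^{-}v\subseteq \Uqb v$. The algebra $\Uqb$ already contains the negative Drinfeld modes $x_{i,m}^{-}$ for $m\geq 1$: for $\hat{\mathfrak{sl}}_{2}$ this follows from Remark \ref{remUqsl2m} once one notes $e_{0} = k_{1}^{-1}x_{1,1}^{-}\in \Uqb$; for general $\mathfrak{g}$ an analogous inclusion holds, obtained from $e_{0}$ together with iterated commutators with elements of $\Uqb^{0}$. The "missing" modes $x_{j,m}^{-}$ with $m\leq 0$ must then be reached using finite-dimensionality of $V$. I would combine the Drinfeld relation (\ref{relhx}), with $h_{i,r}\in\Uqb^{0}$ for $r>0$, with the observation that on the finite-dimensional $V$ the family $\{x_{j,m}^{-}\}_{m\in\mathbb{Z}}$ spans a finite-dimensional subspace of $\mathrm{End}(V)$, so the formal series $\sum_{m}x_{j,m}^{-}z^{m}$ acts on $V$ as a rational function. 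These rationality constraints impose polynomial identities between the modes, which let one inductively rewrite the action of $x_{j,m}^{-}$ with $m\leq 0$ on any vector of $\Uqb v$ in terms of actions of $x_{j,m'}^{-}$ with $m'\geq 1$. An induction on the length of a Drinfeld monomial $x_{j_{1},m_{1}}^{-}\cdots x_{j_{k},m_{k}}^{-}$ then yields $\Uqg^{-}v\subseteq \Uqb v$.

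For the simplicity itself, let $W\subseteq V$ be a non-zero $\Uqb$-submodule. Since $W$ is $\Uqb^{0}$-stable and finite-dimensional, pick an $\ell$-weight vector $w\in W$ whose classical weight in $P_{\mathbb{Q}}$ is maximal among weights of $W$. Maximality forces $x_{i,m}^{+}w = 0$ for every $i\in I$ and $m\geq 0$; the same finite-dimensional closure argument, applied now to the positive Drinfeld modes, forces $x_{i,m}^{+}w = 0$ for all $m\in\mathbb{Z}$. Hence $w$ is a full Drinfeld highest $\ell$-weight vector in the simple $\Uqg$-module $V$, so $w\in\mathbb{C}v$ by uniqueness, giving $v\in W$ and finally $W\supseteq \Uqb v = V$. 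The main obstacle is this finite-dimensional closure step: translating the abstract finite-dimensionality of $V$ into explicit control over how the Drinfeld modes act, in order to ensure that the "positive halves" $\Uqb^{\pm}$ suffice both to reach every vector of $V$ from $v$ and to annihilate any weight-maximal vector $w$.
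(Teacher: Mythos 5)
The paper does not actually prove this statement: it is imported from Hernandez--Jimbo with a pointer to the references therein (Bowman, Chari--Greenstein), so your proposal has to stand on its own. Its overall architecture --- reduce everything to the claim that, on a finite-dimensional module, the Drinfeld modes $x_{j,m}^{-}$ with $m\leq 0$ act as linear combinations of the modes with $m\geq 1$ (which do lie in $\Uqb$), and similarly for the $x_{j,m}^{+}$ --- is exactly the strategy of the cited sources. But the step you yourself flag as ``the main obstacle'' is the entire content of the proposition, and the justification you sketch for it does not work as stated. That $\{x_{j,m}^{-}\}_{m\in\mathbb{Z}}$ spans a finite-dimensional subspace of $\mathrm{End}(V)$ is automatic (any family of operators on a finite-dimensional space does) and says nothing about \emph{which} finite subfamily spans it: a priori the operators $x_{j,m}^{-}|_{V}$ with $m\geq 1$ could span a strictly smaller subspace than the full family, in which case no amount of linear algebra recovers $f_{j}=x_{j,0}^{-}$ from $\Uqb$. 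Likewise ``acts as a rational function'' is not a consequence of finite-dimensionality of the span, and even granted it does not by itself yield a recurrence that can be solved in the \emph{decreasing} direction of $m$.

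What is actually needed, and what Chari--Pressley and Bowman prove, is a two-sided linear recurrence $\sum_{k=0}^{N} a_{k}\, x_{j,m+k}^{-}|_{V}=0$ valid for all $m$, with $a_{0}$ and $a_{N}$ both nonzero; the invertibility of $a_{0}$ is precisely what lets you solve downwards and express $x_{j,m}^{-}|_{V}$ for $m\leq 0$ through modes with $m\geq 1$. This recurrence is extracted from the Drinfeld relations --- essentially from the commutation of the series $x_{j}^{-}(z)$ with $\phi_{j}^{\pm}(w)$ together with the fact that $\phi_{j}^{\pm}(z)$ act on each $\ell$-weight space by rational functions whose values at $0$ and $\infty$ are the invertible eigenvalues of $k_{j}^{\pm 1}$ --- and none of that appears in your argument. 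The same gap propagates to your simplicity step, where you again invoke the ``closure argument'' to get $x_{i,m}^{+}w=0$ for $m<0$ from maximality of the weight of $w$. (The remaining ingredients --- that $x_{i,m}^{-}\in\Uqb$ for $m\geq 1$ in all types, and that the space of vectors killed by all $x_{i,m}^{+}$ in a simple finite-dimensional module is one-dimensional --- are correct, though the former follows from the convex PBW basis of Beck rather than from commutators with $\Uqb^{0}$ alone.) Until the recurrence lemma is stated and proved, or explicitly imported, the proposal is a plan rather than a proof.
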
 
Using this result and the classification of finite-dimensional simple module of quantum affine algebras in \citep{GQG}, as well as \citep[remark 3.11]{BRSQIM}, one has 

\begin{prop}\label{propfd}
Let $\pmb\Psi \in P_{\ell}$. Then the simple $\Uqb$-module $L(\pmb\Psi)$ is finite dimensional if and only if, there exists $\omega \in P_\mathbb{Q}$ such that $\pmb\Psi'=\pmb\Psi / \pmb\Psi_\omega$ satisfies: for all $i\in I$, $\Psi'_{i}(z)$ is of the form
\begin{equation*}
\Psi'_{i}(z) = q_{i}^{\deg(P_i)}\frac{P_i(zq_{i}^{-1})}{P_i(zq_{i})},
\end{equation*} 
where $P_i$ are polynomials.

Moreover, in that case, the action of $\Uqb$ can be uniquely extended to an action of $\Uqg$.
\end{prop}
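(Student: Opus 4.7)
The plan is to deduce both directions from the Chari-Pressley classification of finite-dimensional simple $\Uqg$-modules \citep{GQG}, leveraging the 1-dimensional Borel modules $[\omega]$ of Example \ref{ex1dim'} to absorb the rational twist. The key observation is that tensoring an $\ell$-weight module by $[\omega]$ multiplies its $\ell$-weight by $\pmb\Psi_\omega$, so the decomposition $\pmb\Psi = \pmb\Psi_\omega \pmb\Psi'$ translates between $\Uqb$-highest-$\ell$-weight simples and $\Uqg$-highest-$\ell$-weight simples.

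For the sufficient direction, given $\pmb\Psi'$ of the stated rational form, \citep{GQG} furnishes a finite-dimensional simple $\Uqg$-module $V$ with highest $\ell$-weight $\pmb\Psi'$. By the preceding proposition, $V$ remains simple as a $\Uqb$-module with the same highest $\ell$-weight, so $V \cong L(\pmb\Psi')$. The tensor product $L(\pmb\Psi') \otimes [\omega]$ is then a finite-dimensional $\Uqb$-module whose highest $\ell$-weight is $\pmb\Psi_\omega \pmb\Psi' = \pmb\Psi$, and it is simple since tensoring a simple module by a one-dimensional module preserves simplicity. Hence it is isomorphic to $L(\pmb\Psi)$, which is thereby finite-dimensional.

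For the necessary direction, assume $L(\pmb\Psi)$ is finite-dimensional. I would invoke \citep[Remark 3.11]{BRSQIM} to produce $\omega \in P_{\mathbb{Q}}$ such that $L(\pmb\Psi) \otimes [-\omega] \cong L(\pmb\Psi')$, with $\pmb\Psi' = \pmb\Psi/\pmb\Psi_\omega$, admits an extension to a finite-dimensional simple $\Uqg$-module. The Chari-Pressley classification applied to this $\Uqg$-module then forces each $\Psi'_i(z)$ to have the displayed Drinfeld-polynomial form. The main obstacle lies in this step: one must select $\omega$ so that $\pmb\Psi'$ lands in the integral weight range where the $\Uqg$-classification is available, which is precisely what the cited remark guarantees.

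Finally, for the \emph{moreover} statement, existence of a $\Uqg$-extension of $L(\pmb\Psi)$ is built into the construction in the sufficient direction. Uniqueness follows because any $\Uqg$-extension, after twisting by $[-\omega]$, becomes a $\Uqg$-structure on $L(\pmb\Psi')$ compatible with its $\Uqb$-action; the uniqueness half of \citep{GQG} then ensures the $f_i$-actions are determined by the Drinfeld polynomials, forcing the two extensions to coincide. The principal remaining difficulty is administrative: keeping the twist by $[\omega]$ consistent across the $\Uqb$- and $\Uqg$-pictures, since 1-dimensional Borel modules of non-integral weight do not themselves lift to $\Uqg$ but the full tensor product $L(\pmb\Psi') \otimes [\omega]$ does, via the interplay with the Chari-Pressley module.
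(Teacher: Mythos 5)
Your treatment of the equivalence itself follows the same route the paper takes: the paper offers no independent argument, deducing the statement from the restriction-of-simples proposition, the Chari--Pressley classification, and the twist by the one-dimensional modules $[\omega]$, which is exactly your decomposition $\pmb\Psi=\pmb\Psi_\omega\pmb\Psi'$. Both directions of the ``if and only if'' are fine as you present them (tensoring by $[\omega]$ is invertible on simples, so it transports the classification across the twist).

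The gap is in your justification of the \emph{moreover}. You assert that, although a one-dimensional Borel module $[\omega]$ of non-integral weight does not lift to $\Uqg$, the full tensor product $L(\pmb\Psi')\otimes[\omega]$ does ``via the interplay with the Chari--Pressley module.'' This is false whenever $\omega\neq 0$. Take $\mathfrak{g}=\hat{\mathfrak{sl}_2}$, $L(\pmb\Psi')=V_a$ the two-dimensional fundamental representation of Example \ref{exfond}, and $\omega=c\,\omega_1$ with $c\neq 0$. On $V_a\otimes[\omega]$ the operator $k_1$ has eigenvalues $q^{1+c}$ and $q^{-1+c}$; any candidate $f_1$ must lower the $k_1$-eigenvalue by $q^{2}$, hence kills the lower weight line, and writing out $[e_1,f_1]=\frac{k_1-k_1^{-1}}{q-q^{-1}}$ on the two weight lines and summing the resulting scalar identities gives $(q+q^{-1})(q^{c}-q^{-c})=0$, i.e.\ $c=0$ since $q$ is not a root of unity. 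So the extension to $\Uqg$ exists (and is then unique, by the Drinfeld-polynomial argument you give) only for the untwisted module $L(\pmb\Psi')$ --- equivalently, only when $\pmb\Psi$ is itself a monomial in the $Y_{i,a}$, which is how the \emph{moreover} should be read (compare the Remark immediately following the Proposition). This part of your argument needs to be restated for $L(\pmb\Psi')$ rather than dismissed as an administrative matter of bookkeeping the twist.
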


\begin{rem} Equivalently, $L(\pmb\Psi)$ is finite-dimensional if and only if $\pmb\Psi$ is a monomial in the $(Y_{i,a})_{i\in I, a \in \mathbb{C}^{\times}}$. 
In particular, fundamental representations, and more generally KR-modules, are examples of finite-dimensional $\Uqb$-modules.
\end{rem}

\begin{ex}\label{exfond} For $\mathfrak{g}=\hat{\mathfrak{sl}_{2}}$, for all $a\in \mathbb{C}^{\times}$, 
\begin{equation*}
V_{a} = \mathbb{C}v_{a}^{+}\oplus \mathbb{C}v_{a}^{-},
\end{equation*}
with $v_{a}^{+}$ of $\ell$-weight $Y_{a}$ and $v_{a}^{-}$ of $\ell$-weight $Y_{aq^{2}}^{-1}$.

This example will be used later.
\end{ex}

			\subsubsection{Category $\mathcal{O}$}

The category $\mathcal{O}$ of representations of the Borel algebra was first defined in \citep{ARDRF}, mimicking the usual definition of the category $\cO$ BGG for Kac-Moody algebras. Here, we use the definition in \citep{CABS}, which is slightly different. 

For all $\lambda\in P_{\mathbb{Q}}$, define $D(\lambda) := \{ \omega \in P_{\mathbb{Q}} \mid \omega \geq \lambda \}$.
\begin{defi}\label{defcatO}
 A $\Uqb$-module $V$ is in the category $\mathcal{O}$ if
\begin{enumerate}
	\item $V$ is Cartan diagonalizable,
	\item For all $\omega \in P_{\mathbb{Q}}$, one has $\dim(V_{\omega}) < \infty$,
	\item\label{3catO} There is a finite number of $\lambda_{1},\ldots,\lambda_{s}\in P_{\mathbb{Q}}$ such that all the weights that appear in $V$ are in the cone $\bigcup_{j=1}^{s}D(\lambda_{j})$.
\end{enumerate}
\end{defi} 

The category $\cO$ is a monoidal category \citep{ARDRF}.

\begin{ex}
All finite dimensional $\Uqb$-modules are in the category $\mathcal{O}$.  Moreover the positive and negative prefundamental representations are also in the category $\cO$. 
\end{ex}

Let $\Plr$ be the set of $\ell$-weights $\Psi$ such that, for all $i\in I$, $\Psi_{i}(z)$ is rational. We need the following result.

\begin{theo}\citep{ARDRF}
Let $\Psi\in P_{\ell}$. Simple objects in the category $\cO$ are highest $\ell$-weight modules. The simple module $L(\Psi)$ is in the category $\cO$ if and only if $\Psi\in \Plr$. Moreover, if $V$ is in the category $\cO$ and $V_{\Psi}\neq 0$, then $\Psi\in \Plr$.
\end{theo}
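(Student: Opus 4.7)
The theorem packages three claims: (a) every simple object of $\cO$ is a highest $\ell$-weight module, (b) $L(\Psi)\in\cO$ iff $\Psi\in\Plr$, and (c) every $\ell$-weight appearing in some $V\in\cO$ lies in $\Plr$. Since (c) implies the forward direction of (b), only (a), (c), and the converse of (b) need a proof.

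For (a), let $V$ be simple in $\cO$. The cone condition bounds the support of $V$ above, giving a maximal weight $\omega$ with $V_\omega\neq 0$ and $V_{\omega+\alpha_j}=0$ for every $j\in I$. On the finite-dimensional space $V_\omega$ the commuting operators $\phi_{i,m}^+$ (of $Q$-grading zero, hence stabilizing $V_\omega$) admit a common generalized eigenvector $v$, which is an $\ell$-weight vector of some $\Psi\in P_\ell$. Since $\deg(x_{j,m}^+)=\alpha_j$, one has $x_{j,m}^+v\in V_{\omega+\alpha_j}=0$ for all $j\in I$, $m\geq 0$; as $\Uqb^+=\langle x_{j,m}^+\rangle$, the vector $v$ is a highest $\ell$-weight vector. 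Simplicity then forces $V=\Uqb\cdot v=L(\Psi)$.

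The core of the argument is (c). A filtration argument exploiting the finite-dimensionality of each weight space reduces the problem to $V=L(\Psi')$ simple and highest $\ell$-weight, since every $\ell$-weight of $V$ appears in some simple sub-quotient. For the highest $\ell$-weight $\Psi'$ itself, realize $L(\Psi')$ as the asymptotic limit (\citep{ARDRF}) of a sequence of finite-dimensional simple modules $L(m_n)$ whose highest $\ell$-weights $m_n$ are monomials in the $Y_{i,a}$; by Proposition \ref{propfd} each $m_n$ is rational, and this rationality is preserved under the prescribed limit, giving $\Psi'\in\Plr$. For an arbitrary $\ell$-weight $\Psi$ of $L(\Psi')$, use the commutation relation $[h_{i,r},x_{j,m}^-]=-\tfrac{[rC_{i,j}]_q}{r}x_{j,r+m}^-$: any $\ell$-weight vector of $L(\Psi')$ is obtained from the highest one by the action of $\Uqb^-$, and an induction on the weight expresses the action of $\phi_i^+(z)$ on such a vector as $\Psi_i'(z)$ times a rational shift, whence $\Psi_i(z)\in\mathbb{C}(z)$.

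For the converse of (b), given $\Psi\in\Plr$, decompose via partial fractions to write $\Psi=\Psi_\omega\prod_k\pmb\Psi_{i_k,a_k}^{\epsilon_k}$. The tensor product $W=[\omega]\otimes L_{i_1,a_1}^{\epsilon_1}\otimes\cdots\otimes L_{i_r,a_r}^{\epsilon_r}$ lies in $\cO$ by monoidality; for a judicious ordering of the factors (ensuring cyclicity by a standard argument), the tensor of highest $\ell$-weight vectors is itself a highest $\ell$-weight vector of $\ell$-weight $\Psi$, so the simple quotient of the submodule it generates is $L(\Psi)\in\cO$. The main obstacle is the second half of (c): controlling the $\Uqb^-$-action is delicate because $\Uqbm$ lacks a clean presentation outside $\hat{\mathfrak{sl}}_2$ (cf.\ Remark \ref{remUqsl2m}), so direct computations must be replaced by bootstrap arguments through $\hat{\mathfrak{sl}}_2$-subalgebras or further limit constructions.
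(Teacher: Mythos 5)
The paper does not prove this statement: it is quoted verbatim from \citep{ARDRF}, so there is no in-paper argument to compare against, and your sketch has to be measured against Hernandez--Jimbo's proof. Your overall architecture does follow theirs. Part (a) is essentially complete: the support of a module in $\cO$ is bounded above, a maximal weight exists, and the commuting $\phi_{i,m}^+$ admit a common (genuine, not just generalized) eigenvector on the finite-dimensional top weight space, which is then a highest $\ell$-weight vector. (For this one must read $D(\lambda)=\{\omega\leq\lambda\}$ in Definition \ref{defcatO}, as in the cited sources; with the literal ``$\geq$'' the negative prefundamental representations would not lie in $\cO$.) The converse of (b) is also essentially right: $\Psi\in\Plr$ factors as $\pmb\Psi_{\omega}$ times a product of $\pmb\Psi_{i,a}^{\pm1}$ (a factorization of numerator and denominator, not a partial-fraction expansion), $L(\Psi)$ is a subquotient of the corresponding tensor product of prefundamental representations, and $\cO$ is stable under subquotients; no cyclicity is needed for this weaker conclusion.

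The genuine gap is in (c), which is the substance of the theorem, and it occurs at both of its sub-steps. First, rationality of the highest $\ell$-weight $\Psi'$ of a simple object of $\cO$ cannot be deduced by ``realizing $L(\Psi')$ as an asymptotic limit of finite-dimensional simples'': that construction takes a rational $\ell$-weight as \emph{input} and produces a module, so it says nothing about an arbitrary simple object of $\cO$. The actual mechanism is the finite-dimensionality of $V_{\varpi(\Psi')-\alpha_i}$: the vectors $x_{i,m}^-v$, $m\geq 1$, are linearly dependent, and applying $x_{i,l}^+$ to such a dependence and using $[x_{i,l}^+,x_{i,m}^-]v=\frac{\psi'_{i,l+m}}{q_i-q_i^{-1}}v$ produces a linear recursion on the coefficients $(\psi'_{i,m})_m$, which forces $\psi'_i(z)$ to be rational. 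Second, for the lower $\ell$-weights, the claim that ``an induction on the weight expresses the action of $\phi_i^+(z)$ as $\Psi'_i(z)$ times a rational shift'' is exactly the hard point and is only asserted: the commutator $[h_{i,r},x_{j,m}^-]$ does not act diagonally on the span of the $x_{j,m}^-w$ but shifts $m$ by $r$, so identifying the generalized eigenvalues of $\phi_i^+(z)$ on a lower weight space requires a genuine generating-series computation (your closing sentence concedes the argument as written does not go through beyond $\hat{\mathfrak{sl}}_2$). Finally, the reduction of (c) to simple modules by ``a filtration argument'' needs a local composition series in the style of Kac--Moody category $\cO$, since, as the paper itself notes, objects of $\cO$ need not have finite length.
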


	\subsection{A $q$-character theory}

The $q$-characters of finite-dimensional representations of quantum affine algebras were introduced in \citep{QCRQAA} using transfer-matrices. Here, we consider representations in the category $\cO$, which are not necessarily finite-dimensional. Hence we use the $q$-character morphism on the Grothendieck ring of the category $\cO$ defined in \citep{ARDRF}. More precisely, it is the version of \citep{CABS} we use here (since we also use the definition of the category $\cO$ from \citep{CABS}).

After recalling the definition of the $q$-characters, we use it to define some interesting subcategories of the category $\cO$: the categories $\Op, \Om$ and $\OpZ, \OmZ$, as in \citep{CABS}.

			\subsubsection{$q$-characters for category $\mathcal{O}$}

Let $\mathcal{E}_{\ell}$ be the additive group of all maps $c: \Plr \to \mathbb{Z}$ whose support, $\supp(c) = \{ \Psi \in \Plr \mid c(\Psi) \neq 0 \}$ satisfies: $\varpi( \supp(c)) $ is contained in a finite union of sets of the form $D(\mu)$, and, for all $\omega\in P_{\mathbb{Q}}$, the set $\supp(c)\cap \varpi^{-1}(\omega)$ is finite. Similarly, $\mathcal{E}$ is the additive group of maps $c : P_{\mathbb{Q}} \to \mathbb{Z}$ whose support is contained in a finite union of sets of the form $D(\mu)$. The map $\varpi$ naturally extends to a surjective morphism $\varpi : \mathcal{E}_{\ell} \to \mathcal{E}$.

For all $\pmb\Psi \in \Plr$, let $[\pmb\Psi] = \delta_{\pmb\Psi} \in \mathcal{E}_{\ell}$ (resp. $[\omega] = \delta_{\omega} \in \mathcal{E}$, for all $\omega \in P_{\mathbb{Q}}$). 
\begin{rem}
One notices that this notation is coherent with the ones from  Example \ref{ex1dim'}. Indeed, for all $\omega \in P_{\mathbb{Q}}$, the simple one-dimensional representation $[\omega] = L(\pmb\Psi_{\omega})$ is identified with the map $\delta_{\omega} \in \mathcal{E}$.
\end{rem}

\begin{defi}
Let $V$ be a module in the category $\mathcal{O}$. The $q$\textit{-character of} $V$ is the following element of $\mathcal{E}_{\ell}$:
\begin{equation}\label{defqchar}
\chi_{q}(V) = \sum_{\pmb\Psi \in \Plr} \dim(V_{\pmb\Psi})[\pmb\Psi].
\end{equation}
The \textit{character of} $V$ is the following element of $\mathcal{E}$: 
\begin{equation*}
\chi(V) =\varpi(\chi_{q}(V)) = \sum_{\omega \in P_{\mathbb{Q}}}  \dim(V_{\omega})[\omega].
\end{equation*}
\end{defi}

\begin{rem}\citep[Section 3.2]{CABS} In the category $\cO$, every object does not necessarily have finite length. But, as for the category $\cO$ of a classical Kac-Moody algebra (see \citep{IDLA}), the multiplicity of a simple module is well-defined. Hence we have its Grothendieck ring $K_{0}(\mathcal{O})$. Its elements are formal sums, for each $M\in \cO$,
\begin{equation}\label{decompGR}
[M]= \sum_{\pmb\Psi\in P_{\ell}^{\mathfrak{r}}}\lambda_{\pmb\Psi, M}[L(\pmb\Psi)],
\end{equation}
where $\lambda_{\pmb\Psi, M}$ is the multiplicity of $L(\pmb\Psi)$ in $M$. These coefficients satisfy 
\begin{equation*}
\sum_{\pmb\Psi\in P_{\ell}^{\mathfrak{r}},\omega\in P_{\mathbb{Q}}}|\lambda_{\pmb\Psi,M}|\dim((L(\pmb\Psi))_{\omega})[\omega] \quad \in \mathcal{E}.
\end{equation*} $K_{0}(\cO)$ has indeed a ring structure, because of \ref{3catO}, in Definition \ref{defcatO}.
\end{rem}

The $q$\textit{-character morphism} is the group morphism,
\begin{equation*}
\chi_{q} :K_{0}(\mathcal{O}) \mapsto \mathcal{E}_{\ell} 
\end{equation*}
 which sends a class $[V]$ of a representation $V$ to $\chi_{q}(V)$. It is well defined, as $\chi_{q}$ is compatible with exact sequences.
 
\begin{rem}\label{remqchar}
The definition of a $q$-character makes sense for more general representations than that in the category $\cO$. For every representation $V$ with finite-dimensional $\ell$-weight spaces, (\ref{defqchar}) has a sense. However the resulting $q$-character is not necessarily in the ring $\mathcal{E}_{\ell}$. Moreover, the module is not necessarily the sum of its $\ell$-weight spaces (for example, the Verma modules associated to the $\ell$-weights, as in \citep[Section 3.1]{ARDRF}).
\end{rem}

For $V$ a module in the category $\cO$ having a unique $\ell$-weight $\pmb\Psi$ whose weight is maximal, one can consider its normalized $q$-character $\tilde{\chi}_{q}(V)$:
\begin{equation*}
\tilde{\chi}_{q}(V) : = [\pmb\Psi^{-1}]\cdot \chi_{q}(V).
\end{equation*}

For $i\in I$ and $a\in\mathbb{C}^{\times}$, define $A_{i,a}$ as
\begin{equation*}
 Y_{i,aq_{i}^{-1}}Y_{i,aq_{i}}\left( \prod_{\{j\in I\mid C_{j,i}=-1\}}Y_{j,a} \prod_{\{j\in I\mid C_{j,i}=-2\}}Y_{j,aq^{-1}} Y_{j,aq} \prod_{\{j\in I\mid C_{j,i}=-3\}}Y_{j,aq^{-2}} Y_{j,aq}Y_{j,aq^2}   \right)^{-1}.
\end{equation*}
For all $i\in I, a\in\mathbb{C}^{\times}$, $\varpi(A_{i,a})=[\alpha_{i}]$.
\begin{theo}\label{theonormqchar}\citep{QCRQAA, CqC} For $V$ a simple finite-dimensional $\Uqg$-module, one has
\begin{equation*}
\tilde{\chi}_{q}(V) \in \mathbb{Z}[A_{i,a}^{-1}]_{i\in I,a\in\mathbb{C}^{\times}}.
\end{equation*}
\end{theo}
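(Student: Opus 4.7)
By Proposition \ref{propfd}, the highest $\ell$-weight of $V$ is (up to a one-dimensional twist, which leaves $\tilde{\chi}_{q}$ unchanged) given by a dominant monomial $m$ in the variables $Y_{i,a}$, i.e.\ a monomial with non-negative exponents. Since $\varpi(A_{i,a})=[\alpha_{i}]$, the claim $\tilde{\chi}_{q}(V)\in\mathbb{Z}[A_{i,a}^{-1}]$ is equivalent to showing that every $\ell$-weight monomial $m'$ appearing in the expansion of $\chi_{q}(V)$ can be written in the form $m'= m\cdot\prod_{i,a}A_{i,a}^{-u_{i,a}}$ with $u_{i,a}\in\mathbb{Z}_{\geq 0}$.

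The strategy is to reduce to the rank-$1$ case. For each $i\in I$, let $\mathcal{U}_{i}\subset\Uqg$ denote the subalgebra generated by the Drinfeld generators of index $i$, namely $(x_{i,r}^{\pm})_{r\in\mathbb{Z}}$, $(\phi_{i,\pm s}^{\pm})_{s>0}$ and $k_{i}^{\pm 1}$; up to a central extension it is isomorphic to $\mathcal{U}_{q_{i}}(\hat{\mathfrak{sl}_{2}})$. Restricting $V$ to $\mathcal{U}_{i}$ produces a finite-dimensional $\mathcal{U}_{i}$-module that decomposes as a direct sum of simple $\mathcal{U}_{i}$-modules. I would first establish the theorem in rank $1$ directly: by the classification, every simple finite-dimensional $\mathcal{U}_{q}(\hat{\mathfrak{sl}_{2}})$-module is a tensor product of Kirillov--Reshetikhin modules, and a direct computation shows that $\chi_{q}(W_{k,a}^{(1)})$ is an explicit ``$q$-string'' polynomial manifestly of the desired form; multiplicativity of $\chi_{q}$ under tensor product then yields the claim for all simple modules in rank $1$.

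For general $\mathfrak{g}$, I would argue by induction on the weight $\varpi(\pmb\Psi')$ with respect to the $Q^{+}$-ordering, starting from the maximal weight $\varpi(m)$ and descending. Given $m'\neq m$ appearing in $\chi_{q}(V)$, applying the rank-$1$ result to the $\mathcal{U}_{i}$-cyclic submodule generated by an $\ell$-weight vector of $\ell$-weight $\pmb\Psi'$ expresses $m'= m''\cdot\prod_{a}A_{i,a}^{-v_{i,a}}$, where $m''$ is the $\mathcal{U}_{i}$-highest $\ell$-weight of that submodule and $v_{i,a}\geq 0$. Provided $\varpi(m'')>\varpi(m')$, the inductive hypothesis applied to $m''$ expresses $m/m''$ as a non-negative product of $A_{j,b}$'s, and combining the two factorisations yields the assertion for $m'$. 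The \emph{main obstacle} is thus justifying the induction step: one must show that for every $m'\neq m$ in $\chi_{q}(V)$ there exists at least one $i\in I$ for which $m'$ is not $i$-dominant in its rank-$1$ summand. This non-trivial compatibility across the rank-$1$ restrictions --- ruling out ``dead-end'' non-highest monomials that would be dominant for every $i$ simultaneously --- is the technical core of the Frenkel--Mukhin argument carried out in \citep{CqC}.
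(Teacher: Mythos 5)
The paper does not prove this statement: Theorem \ref{theonormqchar} is quoted from \citep{QCRQAA, CqC} (Frenkel--Reshetikhin and Frenkel--Mukhin) with no argument given, so there is no internal proof to compare against. Your sketch is a faithful reconstruction of the actual Frenkel--Mukhin proof: rank $1$ first (KR modules are $q$-strings, every simple finite-dimensional $\mathcal{U}_{q}(\hat{\mathfrak{sl}_{2}})$-module factors as a tensor product of KR modules in general position, and $\chi_{q}$ is multiplicative), then a descending induction on weight using the restriction to the rank-$1$ subalgebras $\mathcal{U}_{i}$. You have also correctly located the hard point.

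Two caveats. First, the restriction of $V$ to $\mathcal{U}_{i}$ need \emph{not} decompose as a direct sum of simple $\mathcal{U}_{i}$-modules --- finite-dimensional modules over quantum affine algebras are not semisimple, and neither are their restrictions. What one actually uses is a Jordan--H\"older filtration of $\mathrm{Res}_{\mathcal{U}_{i}}V$, which suffices because $q$-characters only see the composition factors; this gives the decomposition $\chi_{q}(V)=\sum_{p}m_{p}\chi_{p}$ with $m_{p}$ $i$-dominant and $\chi_{p}\in\mathbb{Z}[A_{i,c}^{-1}]$. Second, the step you flag as the ``main obstacle'' --- that any monomial $m'\neq m$ of maximal weight among those not in $m\cdot\mathbb{Z}[A_{i,a}^{-1}]$ would have to be the head of its $i$-string for every $i$, hence dominant, and that this is incompatible with simplicity of $V$ (such an $\ell$-weight vector would be annihilated by all $x_{i,r}^{+}$ and generate a proper submodule) --- is indeed the crux, and you state it without proving it, deferring to \citep{CqC}. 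As a blind proof your text is therefore an accurate outline of the standard argument rather than a complete one, but since the paper itself only cites the result, this is an entirely reasonable level of detail; there is no wrong turn in the strategy.
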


One has more precise results when $V$ is a fundamental representation, we will use some later on.

			\subsubsection{Categories $\Op$ and $\Om$}
	
Let us now recall the definitions of some subcategories of the category $\cO$, introduced in \citep{CABS}. These categories are interesting to study because the $\ell$-weights of the simple modules have some unique decomposition.

\begin{defi}\label{defiPosNegPoids}
An $\ell$-weight of $P_{\ell}^{\mathfrak{r}}$ is said to be \textit{positive} (resp. \textit{negative}) if it is a monomial in the following $\ell$-weights:
\begin{itemize}
	\item[•] the $Y_{i,a} = q_{i}\pmb\Psi_{i,aq_{i}}^{-1}\pmb\Psi_{i,aq_{i}^{-1}}$, where $i \in I$ and $a\in \mathbb{C}^{\times}$,
	\item[•] the $\pmb\Psi_{i,a}$ (resp. $\pmb\Psi_{i,a}^{-1}$), where $i \in I$ and $a\in \mathbb{C}^{\times}$,
	\item[•] the $[\omega]$, where $\omega\in P_{\mathbb{Q}}$.
\end{itemize}
\end{defi}
Let us denote by $P^{+}_{\ell}$ (resp. $P^{-}_{\ell}$) the ring of positive (resp. negative) $\ell$-weights.

\begin{defi}
The category $\Op$ (resp. $\Om$) is the category of representations in $\cO$ whose simple constituents have a positive (resp. negative) highest $\ell$-weight, in the sense of (\ref{decompGR}), that is: for $M$ in $\cO^{\pm}$, one can write,
\begin{equation*}
\chi_{q}(M) = \sum_{\pmb\Psi\in P_{\ell}^{\pm}}\lambda_{\pmb\Psi,M}[L(\pmb\Psi)].
\end{equation*} 
\end{defi}

\begin{rem}\begin{enumerate}[(i)]
	\item The category $\Op$ (resp. $\Om$) contains $\mathscr{C}$, the category of finite-dimensional representations, as well as the positive (resp. negative) prefundamental representations $L_{i,a}^{+}$ (resp. $L_{i,a}^{-}$), for all $i\in I, a\in \mathbb{C}^{\times}$.
	
	\item The generalized Baxter's relations in \citep{BRSQIM} are satisfied in the Grothendieck rings $K_{0}(\cO^{\pm})$.

	\item Positive $\ell$-weights have a unique factorization into a product of $Y_{i,a}$ and $\pmb\Psi_{i,a}$. In particular, for $\mathfrak{g} = \hat{\mathfrak{sl}_{2}}$, this implies a unique factorization of simple modules into products of prime simple representations in $\Op$ (see \citep[Theorem 7.9]{CABS}).
\end{enumerate}
\end{rem}

\begin{theo}\citep{CABS}
The categories $\Op$ and $\Om$ are monoidal categories. 
\end{theo}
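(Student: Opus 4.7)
The plan is to verify the two monoidal axioms for $\Op$, the case of $\Om$ being symmetric (swap $\pmb\Psi_{i,a}$ with $\pmb\Psi_{i,a}^{-1}$ throughout). The unit object is $[0] = L(\pmb\Psi_{0})$, whose highest $\ell$-weight $\pmb\Psi_{0}$ is the instance of $[\omega]$ at $\omega = 0$, and is therefore positive by the first and third bullets of Definition \ref{defiPosNegPoids}. So the unit axiom is immediate.

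The substantive axiom is closure under tensor products. Take $V, W \in \Op$; since $\cO$ is itself monoidal, $V \otimes W \in \cO$. Writing $[V] = \sum_{\pmb\Psi_1 \in P_\ell^+} \lambda_{\pmb\Psi_1, V} [L(\pmb\Psi_1)]$ in $K_0(\cO)$ and analogously for $W$, and using that $\chi_q$ is a ring morphism, every simple subquotient of $V \otimes W$ is a simple subquotient of some $L(\pmb\Psi_1) \otimes L(\pmb\Psi_2)$ with $\pmb\Psi_1, \pmb\Psi_2 \in P_\ell^+$. It therefore suffices to prove the key claim: for $\pmb\Psi_1, \pmb\Psi_2 \in P_\ell^+$, every simple subquotient of $L(\pmb\Psi_1) \otimes L(\pmb\Psi_2)$ has highest $\ell$-weight in $P_\ell^+$.

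For the top simple quotient this is easy. By Proposition \ref{propappcoprod}, $\Delta(h_{i,r})$ agrees with $h_{i,r} \otimes 1 + 1 \otimes h_{i,r}$ modulo $\tilde{\mathcal{U}}_q^-(\mathfrak{b}) \otimes \tilde{\mathcal{U}}_q^+(\mathfrak{b})$, and the correction terms annihilate the pure tensor $v_1 \otimes v_2$ of highest $\ell$-weight vectors, because the positive-degree factors kill $v_2$. Hence $v_1 \otimes v_2$ is an $\ell$-weight vector of $\ell$-weight $\pmb\Psi_1\pmb\Psi_2 \in P_\ell^+$, the cyclic submodule $\Uqb \cdot (v_1 \otimes v_2)$ is a highest $\ell$-weight module with positive highest $\ell$-weight, and its simple quotient $L(\pmb\Psi_1\pmb\Psi_2)$ lies in $\Op$.

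The main obstacle is to control the remaining simple subquotients, which sit in the quotient $L(\pmb\Psi_1) \otimes L(\pmb\Psi_2) / \Uqb(v_1 \otimes v_2)$. I would argue by induction on the weight distance in $Q^+$ from $\varpi(\pmb\Psi_1\pmb\Psi_2)$: each newly generated simple subquotient corresponds to a highest $\ell$-weight vector in the quotient, and tracing its $\ell$-weight through the approximate coproduct formulas of Proposition \ref{propappcoprod} shows that the resulting highest $\ell$-weight is again a product of generators in $\{Y_{i,a}, \pmb\Psi_{i,a}, [\omega]\}$, with the $\pmb\Psi$-exponents inherited non-negatively from those of $\pmb\Psi_1$ and $\pmb\Psi_2$. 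The uniqueness of positive factorizations (the remark after Definition \ref{defiPosNegPoids}) and the generalized Baxter relations of \citep{BRSQIM} provide the bookkeeping needed to close the induction.
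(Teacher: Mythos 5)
The paper does not actually prove this statement; it is quoted from \citep{CABS}, so the only ``proof'' given is the citation, and your proposal has to be judged as a self-contained argument. The easy parts are fine: the unit object $[0]$ is positive, the reduction via $K_0(\cO)$ to tensor products $L(\pmb\Psi_1)\otimes L(\pmb\Psi_2)$ of simples is legitimate (all multiplicities involved are non-negative), and the computation showing that the pure tensor of highest $\ell$-weight vectors has $\ell$-weight $\pmb\Psi_1\pmb\Psi_2\in P_\ell^+$, so that the head $L(\pmb\Psi_1\pmb\Psi_2)$ lies in $\Op$, is correct.

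The last paragraph, however, is where the entire content of the theorem lives, and there it is asserted rather than proved. Two concrete gaps. First, to control the highest $\ell$-weight of a lower simple subquotient you need to know the shape of \emph{all} $\ell$-weights of $L(\pmb\Psi_1)\otimes L(\pmb\Psi_2)$; the relevant fact --- every $\ell$-weight of a simple module in $\cO$ equals the highest one times a monomial in the $A_{i,a}^{-1}$ --- is itself a nontrivial theorem of \citep{CABS} (the category-$\cO$ analogue of the Frenkel--Mukhin result) and does not follow from the approximate coproduct formulas of Proposition \ref{propappcoprod} alone. Second, and more seriously, even granting that fact, positivity is \emph{not} stable under multiplication by $A_{i,a}^{-1}$: for instance $Y_{i,a}A_{i,aq_i}^{-1}$ involves $Y_{i,aq_i^2}^{-1}$ and is not a positive $\ell$-weight. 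So your claim that ``the resulting highest $\ell$-weight is again a product of generators with the exponents inherited non-negatively'' is false for a general $\ell$-weight of the tensor product of the form $\pmb\Psi_1\pmb\Psi_2\prod A_{i,a}^{-1}$; the whole difficulty of the theorem is to show that the non-positive ones cannot occur as highest $\ell$-weights of simple constituents. Neither the uniqueness of positive factorizations nor the generalized Baxter relations visibly performs that exclusion, so the induction does not close as written.
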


		\subsubsection{The categories $\OpZ$ and $\OmZ$}

First, let us recall the infinite quiver defined in \citep[Section 2.1.2]{ACAA}. Let $\tilde{V} = I\times \mathbb{Z}$ and $\tilde{\Gamma}$ be the quiver with vertex set $\tilde{V}$ whose arrows are given by
\begin{equation*}
((i,r) \to (j,s)) \Longleftrightarrow (C_{i,j}\neq 0 \text{ and } s=r+d_{i}C_{i,j}).
\end{equation*}
 
Select one of the two connected components of $\tilde{\Gamma}$ (see \citep[Lemma 2.2]{ACAA}) and call it $\Gamma$. The vertex set of $\Gamma$ is denoted by $V$. 

\begin{ex}\label{exgammasl2}
For $\mathfrak{g}=\mathfrak{sl}_{2}$, the infinite quiver is 
\begin{center}
\begin{tikzpicture}
\draw (-1, -0.5) node{$\tilde{\Gamma}$ : };
\draw (0,0) node{$\bullet$};
\draw (0,0) node[above]{(1,-2)};
\draw (2,0) node{$\bullet$};
\draw (4,0) node{$\bullet$};
\draw (6,0) node{$\bullet$};
\draw (2,0) node[above]{(1,0)};
\draw (4,0) node[above]{(1,2)};
\draw (6,0) node[above]{(1,4)};
\draw (1,-1) node{$\bullet$};
\draw (3,-1) node{$\bullet$};
\draw (5,-1) node{$\bullet$};
\draw (7,-1) node{$\bullet$};
\draw (1,-1) node[below]{(1,-1)};
\draw (3,-1) node[below]{(1,1)};
\draw (5,-1) node[below]{(1,3)};
\draw (7,-1) node[below]{(1,5)};
\draw [->] (0.3,0) -- (1.7,0);
\draw [->] (2.3,0) -- (3.7,0);
\draw [->] (4.3,0) -- (5.7,0);
\draw [->] (1.3,-1) -- (2.7,-1);
\draw [->] (3.3,-1) -- (4.7,-1);
\draw [->] (5.3,-1) -- (6.7,-1);
\draw (0,-1) node{$\cdots$};
\draw [->] (0.3,-1) -- (0.7,-1);
\draw (7,0) node{$\cdots$};
\draw [->] (6.2,0) -- (6.6,0);
\draw (-1,0) node{$\cdots$};
\draw [->] (-0.7,0) -- (-0.3,0);
\draw (8,-1) node{$\cdots$};
\draw [->] (7.2,-1) -- (7.6,-1);
\end{tikzpicture}
\end{center}
In that case, the choice of a connected component is the choice of a parity.
\end{ex}

\begin{defi}\citep{CABS}
Define the category $\OpZ$ (resp. $\OmZ$) as the subcategory of representations of $\Op$ (resp. $\Om$) whose simple components have a highest $\ell$-weight $\pmb\Psi$ such that the roots and poles of $\Psi_{i}(z)$ are of the form $q_{i}^{r}$, with $(i,r) \in V$.
\end{defi}

\begin{rem}\label{remOZ} \begin{enumerate}[(i)]	\item \citep[4.3]{CABS} One does not lose any information by only studying the subcategories $\mathcal{O}_{\mathbb{Z}}^{\pm}$, instead of $\cO^{\pm}$. Indeed, as in the case of finite dimensional representations, each simple object in $\mathcal{O}^{\pm}$ has a decomposition into a tensor product of simple objects which are essentially in $\mathcal{O}_{\mathbb{Z}}^{\pm}$.

	\item Moreover, these categories are in themselves interesting to study, because they are categorification of some cluster algebras (see \citep[Theorem 4.2]{CABS}).
\end{enumerate}
\end{rem}

\section{Limits of $q$-characters of standard modules}\label{sectlim}

In this section, we recall the definition of standard modules for finite-dimensional representations. Then we show that the $q$-characters of a specific sequence of standard modules converge to some limit. We conjecture that this limit is the $q$-character of some $\Uqb$-module which respects some of the structure of the finite-dimensional standard modules.

	\subsection{Standard modules for finite dimensional representations}\label{sectstand}
		
	For finite-dimensional representations of a quantum affine algebra, one can define the \textit{standard module} associated to a given highest $\ell$-weight. Let $m$ be a monomial $m= Y_{i_{1},a_{1}}Y_{i_{2},a_{2}}\cdots Y_{i_{N},a_{N}}$, then the associated standard module is the following tensor product of fundamental representations
	\begin{equation}\label{standstandard}
	M(m):= V_{i_{1},a_{1}}\otimes V_{i_{2},a_{2}}\otimes \cdots\otimes V_{i_{N},a_{N}},
	\end{equation}
where the tensor product is written so that: 
\begin{equation}\label{sstar}
\left( \text{if } k<l \text{, then } a_{l}/a_{k}\notin q^{\mathbb{N}}\right) .
\end{equation} 

Recall the following result, which is a weaker form of \citep[Theorem 5.1 and Corollary 5.3]{BGAT}:
\begin{lemme}\label{lemtwotens}
For $(a,b) \in \left(\mathbb{C}^{\times}\right)^{2}$ such that $a/b \notin q^{\mathbb{Z}}$, and all $i,j\in I$, then
\begin{equation*}
V_{i,a}\otimes V_{j,b} \simeq V_{j,b}\otimes V_{i,a}, \quad \text{ and this module is irreducible.}
\end{equation*}
\end{lemme}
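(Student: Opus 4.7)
The plan is to reduce the statement to the standard fact that, for generic spectral parameters, the tensor product of two fundamental modules is irreducible as a $\Uqg$-module, and that both orderings of the tensor product give isomorphic modules (hence in particular isomorphic as $\Uqb$-modules by restriction). Once this is done, the weaker $\Uqb$-statement of the lemma follows at once, since each $V_{i,a}$ is the restriction of a simple $\Uqg$-module by Proposition \ref{propfd}, and a simple $\Uqg$-module remains simple as a $\Uqb$-module by the proposition cited before Proposition \ref{propfd}.

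The first step I would carry out is to produce a highest $\ell$-weight vector in $V_{i,a}\otimes V_{j,b}$. Writing $v_{i,a}$, $v_{j,b}$ for the highest $\ell$-weight vectors of the two factors, the approximate coproduct formulas of Proposition \ref{propappcoprod} give that $v_{i,a}\otimes v_{j,b}$ is annihilated by all $x_{k,m}^{+}$ and is a joint eigenvector for the $\phi_{k,m}^{+}$ with eigenvalues corresponding to the product $Y_{i,a}Y_{j,b}$. Hence the submodule $W\subset V_{i,a}\otimes V_{j,b}$ generated by this vector is of highest $\ell$-weight $Y_{i,a}Y_{j,b}$, and admits $L(Y_{i,a}Y_{j,b})$ as its unique simple quotient.

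The second, and main, step is to establish cyclicity: under the genericity assumption $a/b\notin q^{\mathbb{Z}}$, $W=V_{i,a}\otimes V_{j,b}$. The conceptual input here is the theory of the normalized R-matrix: there is a $\Uqg$-linear intertwiner $\check{R}(a/b) : V_{i,a}\otimes V_{j,b}\to V_{j,b}\otimes V_{i,a}$ whose matrix coefficients are rational functions of $a/b$ whose poles lie in a prescribed subset of $q^{\mathbb{Z}}$ (of the form $a/b = q^{r}$ for finitely many integers $r$ depending on $i,j$). Away from these poles, $\check{R}(a/b)$ is an isomorphism, and applying the same reasoning with the roles of $V_{i,a}$ and $V_{j,b}$ swapped yields an inverse intertwiner. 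This simultaneously gives the commutation $V_{i,a}\otimes V_{j,b}\simeq V_{j,b}\otimes V_{i,a}$ and the cyclicity of both highest $\ell$-weight vectors. A symmetric argument with lowest $\ell$-weight vectors (using the bar-involution or, equivalently, working with the opposite Borel) shows that $v_{i,a}\otimes v_{j,b}$ also generates the module, and that it is cogenerated by the tensor of lowest vectors.

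The final step is to combine these to get irreducibility. Any nonzero submodule $U\subset V_{i,a}\otimes V_{j,b}$ contains a highest $\ell$-weight vector; but by the uniqueness coming from the first step (the weight space of weight $\varpi(Y_{i,a}Y_{j,b})$ is one-dimensional in the tensor product), $U$ must contain $v_{i,a}\otimes v_{j,b}$, hence $U = V_{i,a}\otimes V_{j,b}$ by cyclicity. Therefore $V_{i,a}\otimes V_{j,b}\simeq L(Y_{i,a}Y_{j,b})$, and likewise $V_{j,b}\otimes V_{i,a}\simeq L(Y_{j,b}Y_{i,a}) = L(Y_{i,a}Y_{j,b})$ since these are equal as monomials, yielding the isomorphism. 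The main obstacle is the second step: controlling the poles of $\check{R}(a/b)$ precisely enough to ensure they are confined to $q^{\mathbb{Z}}$, which in the original reference \citep{BGAT} requires an explicit analysis of the R-matrix on fundamental modules.
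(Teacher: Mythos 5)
First, note that the paper does not actually prove this lemma: it is stated as a weaker form of \citep[Theorem 5.1 and Corollary 5.3]{BGAT} and simply cited, so your sketch is really being measured against the argument of that reference rather than against anything in the paper. Your overall strategy --- exhibit the highest $\ell$-weight vector $v_{i,a}\otimes v_{j,b}$ via the coproduct, establish cyclicity of both orderings through the normalized R-matrix, then deduce irreducibility --- is indeed the standard route taken there, and you are right that the real technical content is the pole analysis of $\check{R}(a/b)$, which you (reasonably, exactly as the paper does) delegate to the reference.

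There is, however, a genuine gap in your final step. From ``any nonzero submodule $U$ contains a highest $\ell$-weight vector'' you conclude that $U$ contains $v_{i,a}\otimes v_{j,b}$ because the weight space of weight $\varpi(Y_{i,a}Y_{j,b})$ is one-dimensional. This inference is false: a highest $\ell$-weight vector of $U$ (a vector killed by all the $x^{+}_{k,m}$) need not lie in the top weight space --- in any reducible tensor product the proper submodule is generated by a singular vector of strictly lower weight, and one-dimensionality of the top weight space says nothing about it. What actually closes the argument is the statement you mention only in passing, namely cogeneration: one needs that $V_{i,a}\otimes V_{j,b}$ is cyclic \emph{and} that its dual $V_{j,b}^{*}\otimes V_{i,a}^{*}$ is cyclic (the duals are again fundamental modules with spectral parameters shifted by powers of $q$, so the hypothesis $a/b\notin q^{\mathbb{Z}}$ still applies to them). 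Dual cyclicity means the socle of $V_{i,a}\otimes V_{j,b}$ is simple with highest $\ell$-weight $Y_{i,a}Y_{j,b}$, hence meets the one-dimensional top weight space, hence contains $v_{i,a}\otimes v_{j,b}$ and equals the whole module by cyclicity. Relatedly, your claim that invertibility of $\check{R}(a/b)$ ``simultaneously gives'' cyclicity is backwards: an isomorphism $V_{i,a}\otimes V_{j,b}\simeq V_{j,b}\otimes V_{i,a}$ does not by itself imply that either side is generated by its top vector; in the cited reference cyclicity is established independently, and it is what forces $\check{R}$ to be surjective, hence bijective by dimension count.
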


The definition of the standard module $M(m)$ by the expression (\ref{standstandard}) is unambiguous, it does not depend on the order of the factors, as long as (\ref{sstar}) is satisfied. Indeed, with Lemma \ref{lemtwotens}, as long as the condition (\ref{sstar}) is satisfied, two such tensor products are isomorphic.

Moreover, each simple finite-dimensional representation has a highest $\ell$-weight $m$ that can be uniquely written as a monomial in the $Y_{i,a}$'s.

One would want to define analogs of the standard modules for a larger category of representations, namely the category $\mathcal{O}$.

	\subsection{Limits of $q$-character}\label{psi1infty}

It is known (\citep[Theorem 1.1]{tAqCKR} for simply laced types and \citep[Theorem 3.4]{KRC} in general) that the normalized $q$-characters of KR-modules, seen as polynomials in $A_{i,a}^{-1}$, have limits as formal power series. In \citep[Section 6.1]{ARDRF}, the normalized $q$-character of a negative prefundamental representation is explicitly obtained, as a formal power series in $\mathbb{Z}[[A_{i,a}^{-1}]]$,  as a limit of a sequence of normalized $q$-characters of KR-module (hence finite-dimensional representations).

The idea here is to consider the limit of a specific sequence of normalized $q$-characters of finite-dimensional standard modules. The formal power series we obtain is a conjectural normalized $q$-character for a potential infinite-dimensional standard module. 

For $i\in I$ and $a\in \mathbb{C}^{\times}$, consider the negative $\ell$-weight $\pmb\Psi = \pmb\Psi_{i,a}^{-1}$. 

One has $(\pmb\Psi)_{j}=0$ for $j\neq i$ and $\left(\pmb\Psi\right)_{i}(z)  = \frac{1}{1-az}$. Heuristically, one wants to write 
\begin{align*}
\frac{1}{1-az}  &\approx \frac{1-aq_{i}^{-2}z}{1-az}\times\frac{1-aq_{i}^{-4}z}{1-aq_{i}^{-2}z} \times \frac{1-aq_{i}^{-6}z}{1-aq_{i}^{-4}z} \times \frac{1-aq^{-8}z}{1-aq^{-6}z} \times \cdots \\
& \approx \left(\tilde{Y}_{i,aq_{i}^{-1}}\tilde{Y}_{i,aq_{i}^{-3}}\tilde{Y}_{i,aq_{i}^{-5}}\tilde{Y}_{i,aq_{i}^{-7}} \cdots\right)_{i}(z),
\end{align*}
with the normalized $\ell$-weights defined in (\ref{normlweight}).
Thus consider, for $N\geq 1$,
\begin{equation*}
m_{N} := \prod_{k=0}^{N-1} \tilde{Y}_{i,aq_{i}^{-2k-1}}.
\end{equation*}
As stated before, from \citep[Theorem 6.1]{ARDRF}, one knows that, as formal power series, the normalized $q$-characters satisfy
\begin{equation*}
\tilde{\chi_{q}}(L(m_{N})) \xrightarrow[N\to +\infty]{} \tilde{\chi_{q}}(L_{i,a}^{-}).
\end{equation*}

For $N\geq 1$, one can look at the normalized $q$-character of the standard module associated to the same $\ell$-weight $m_{N}$. Consider the standard module
\begin{equation}\label{fdsm}
S_{N}:= V_{i,aq_{i}^{-1}}\otimes V_{i,aq_{i}^{-3}}\otimes V_{i,aq_{i}^{-5}} \otimes \cdots \otimes V_{i,aq_{i}^{-2N+1}}.
\end{equation}
And its normalized $q$-character
\begin{equation*}
\tilde{\chi}_{N}:= \tilde{\chi}_{q}(S_{N}) = \prod_{k=0}^{N-1}\tilde{\chi}_{q}\left(V_{i,aq_{i}^{-2k-1}}\right) = \prod_{k=0}^{N-1}\tilde{\chi}_{q}\left(L(\tilde{Y}_{i,aq_{i}^{-2k-1}})\right).
\end{equation*}

Then 
\begin{theoreme}\label{proplimqchar}
For all $N\geq 1$, $\tilde{\chi}_{N} \in \mathbb{Z}[A_{j,aq^{l}}^{-1}]_{j\in I, l \in \mathbb{Z}}$. 

As a formal power series in $(A_{j,aq^{l}}^{-1})_{j\in I, l \in \mathbb{Z}}$, $\tilde{\chi}_{N}$ has a limit as $N\to +\infty$,
\begin{equation}\label{limqchargen}
\tilde{\chi}_{N} \xrightarrow[N\to +\infty]{} \chi_{i,a}^{\infty} \in \mathbb{Z}[[A_{j,aq^{l}}^{-1}]]_{j\in I, l \in \mathbb{Z}}.
\end{equation}
\end{theoreme}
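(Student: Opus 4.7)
The plan is to prove the two assertions in sequence: first the polynomial claim, then convergence as a formal power series, using a coefficient-by-coefficient stabilization argument.

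For the first assertion, the idea is to push the problem onto a single fundamental factor. By Theorem~\ref{theonormqchar}, each $\tilde{\chi}_{q}(V_{i,b})\in \mathbb{Z}[A_{j,c}^{-1}]_{j\in I,c\in\mathbb{C}^{\times}}$, and the known structure of $q$-characters of fundamental representations (via the Frenkel--Mukhin algorithm, or equivalently the explicit description of their $\ell$-weights as obtained by successive multiplication of $Y_{i,b}$ by $A_{j,c}^{-1}$) implies that there is a finite set $S_{i}\subset \mathbb{Z}$, depending only on $i$, such that every $A_{j,c}^{-1}$ appearing in $\tilde{\chi}_{q}(V_{i,b})$ has $c = b q^{s}$ with $s\in S_{i}$. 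Taking $b = aq_{i}^{-2k-1} = aq^{-d_{i}(2k+1)}$, each $c$ lies in $aq^{\mathbb{Z}}$, and therefore the product $\tilde{\chi}_{N}$ lies in $\mathbb{Z}[A_{j,aq^{l}}^{-1}]_{j\in I,l\in \mathbb{Z}}$.

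For the convergence, the key observation is that as $k$ grows, the new $A^{-1}$-factors contributed by $V_{i,aq_{i}^{-2k-1}}$ live in a window of bounded width shifting to $-\infty$ in the $q$-shift parameter. Write
\begin{equation*}
\tilde{\chi}_{q}(V_{i,aq_{i}^{-2k-1}}) = 1 + P_{k},
\end{equation*}
where $P_{k}$ is a $\mathbb{Z}$-linear combination of non-trivial monomials, each involving only factors $A_{j,aq^{l}}^{-1}$ with $l \in [-d_{i}(2k+1)-r_{0}, -d_{i}(2k+1)+r_{0}]$, for $r_{0}:=\max\{|s|:s\in S_{i},\,i\in I\}$. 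Expanding
\begin{equation*}
\tilde{\chi}_{N} = \prod_{k=0}^{N-1}(1+P_{k}) = \sum_{J\subseteq \{0,\ldots,N-1\}}\prod_{k\in J} P_{k},
\end{equation*}
one sees that for any fixed monomial $M = \prod_{(j,l)} A_{j,aq^{l}}^{-n_{j,l}}$ the summands contributing non-trivially to $M$ require $J$ to be contained in the finite set of $k$'s whose window meets the (finite) support of exponents of $M$. Hence for $N$ larger than the maximum such $k$, the coefficient of $M$ in $\tilde{\chi}_{N}$ is constant. Since $M$ is arbitrary, the sequence $\tilde{\chi}_{N}$ stabilizes coefficient-wise and the limit $\chi_{i,a}^{\infty}$ exists in $\mathbb{Z}[[A_{j,aq^{l}}^{-1}]]_{j\in I,l\in\mathbb{Z}}$.

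The main obstacle I anticipate is the input needed in the first paragraph: we must know that the $A^{-1}$ content of $\tilde{\chi}_{q}(V_{i,b})$ is confined to a $b$-independent finite window of $q$-shifts. In simply-laced types this is transparent from Nakajima's tableau description; in general it follows from the fact that $V_{i,b}$ is finite-dimensional together with the known recursive structure of its $\ell$-weights, but it must be invoked carefully (one may alternatively use the Nakajima partial ordering~(\ref{Nakajimaorder}) on monomials, in terms of which every monomial of $\tilde{\chi}_{q}(V_{i,b})$ is bounded between $1$ and an explicit lowest monomial, forcing the boundedness of the $q$-shifts). Once this uniform bound is secured, the convergence part is a purely combinatorial finiteness argument, and no further analytic or representation-theoretic input is needed.
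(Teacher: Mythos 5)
Your proposal is correct and follows essentially the same route as the paper: both arguments reduce to showing that, for a fixed monomial, only finitely many fundamental factors $\tilde{\chi}_{q}(V_{i,aq_{i}^{-2k-1}})$ can contribute non-trivially, so the coefficients stabilize. The only difference is the structural input invoked: the paper uses the fact (from the proof of Lemma 6.5 in \citep{CqC}) that every non-trivial monomial of $\tilde{\chi}_{q}(V_{j,b})$ is $\leq A_{j,bq_{j}}^{-1}$ in Nakajima's order, hence divisible by that specific factor, whereas you use the slightly coarser (but equally standard and sufficient) bounded-window property of the $q$-shifts appearing in $\tilde{\chi}_{q}(V_{i,b})$.
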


\begin{rem}\label{remfps}
Here we consider formal sums of monomials in the $(A_{j,aq^{l}}^{-1})_{j\in I, l \in \mathbb{Z}}$:
\begin{equation*}
\mathbb{Z}[[A_{j,aq^{l}}^{-1}]]_{j\in I, l \in \mathbb{Z}} \ni \sum_{\alpha\in \mathbb{N}^{(I\times \mathbb{Z})}} c_{\alpha} A^{-1}_{\alpha},
\end{equation*}
where each $\alpha$ is a finitely supported sequence of non-negative integers on $I\times \mathbb{Z}$, $A_{\alpha} = \prod_{(j,l)\in I\times \mathbb{Z}}A_{j,aq^l}^{\alpha(j,l)}$. This definition is necessary as the result we have to deal with has homogeneous parts (for the standard degree $\deg(A^{-1}_\alpha) = \sum_{(j,l)\in I\times\mathbb{Z}} \alpha(j,l)$) with infinitely many terms, which was not the case in the limits considered in \citep{ARDRF}. For example, here we want to be able to consider elements such as $\sum_{l\in \mathbb{Z}}A_{j,aq^l}^{-1}$, for all $j\in I$.
We get a well-defined ring of formal power series.

The topology on $\mathbb{Z}[[A_{j,aq^{l}}^{-1}]]_{j\in I, l \in \mathbb{Z}}$ is the one of the pointwise convergence: a sequence of its elements converges only if for each monomial $ A^{-1}_{\alpha}$ the corresponding coefficient converges, or more precisely is eventually constant, as these coefficients are integers.
\end{rem}

\begin{proof}
From Theorem \ref{theonormqchar}, for all $N\geq 1$, $\tilde{\chi}_{N} \in \mathbb{Z}[A_{j,b}^{-1}]_{j\in I, b \in \mathbb{C}^{\times}}$. Moreover, by \citep[Lemma 6.1]{CqC}, each $\tilde{\chi}_{q}(V_{i,aq_{i}^{-2k-1}} )$ is in $\mathbb{Z}[A_{j,aq^{l}}^{-1}]_{j\in I, l \in \mathbb{Z}}$, thus $\tilde{\chi}_{N}$ too.

By \citep[proof of Lemma 6.5]{CqC}, for all $j\in I, b \in \mathbb{C}^{\times}$, 
\begin{align*}
\tilde{\chi}_{q}(V_{j,b}) = 1 + \sum m,\text{ with } m \leq A_{j,bq_{j}}^{-1},
\end{align*}
for Nakajima's partial order on the monomials in $(Y_{j,b}^{\pm})_{j\in I, b \in \mathbb{C}^{\times}}$ \citep{QVtA}:
\begin{equation}\label{Nakajimaorder}
m\leq m' \Leftrightarrow m(m')^{-1} = \prod_{\text{finite}} A_{j,b}^{-1}.
\end{equation}

Let us fix a monomial $m \in \mathbb{Z}[A_{j,aq^{l}}^{-1}]_{j\in I, l \in \mathbb{Z}}$. If the factor $\tilde{\chi}_{q}(V_{i,aq_{i}^{-2k-1}})$ contributes to the multiplicity of $m$ in $\tilde{\chi}_{N}$, then $A_{i,aq_{i}^{-2k}}^{-1}$ is a factor of $m$. Thus, only a finite number of factors $\tilde{\chi}_{q}(V_{i,aq_{i}^{-2k-1}})$ can contribute to this multiplicity. In particular, for $k$ large enough, the factor $\tilde{\chi}_{q}(V_{i,aq_{i}^{-2k-1}})$ does not contribute to the multiplicity of $m$ in $\tilde{\chi}_{N}$, for all $N > k$. Thus the multiplicity of $m$ in $\tilde{\chi}_{N}$ is stationary, as $N\to +\infty$. 

As this is true for all monomials in $\mathbb{Z}[A_{j,aq^{l}}^{-1}]_{j\in I, l \in \mathbb{Z}}$, the limit of $\tilde{\chi}_{N}$ as $N \to +\infty$ is well defined as a formal power series (see Remark \ref{remfps}). 
\end{proof}

\begin{ex}
For $\mathfrak{g}=\hat{\mathfrak{sl}_{2}}$, one can compute this formula explicitly. Consider the $\ell$-weight $\pmb\Psi=\pmb\Psi_{1,1}^{-1}$. In this case, the normalized $q$-character of $S_{N}$ is known, 
\begin{equation*}
\tilde{\chi}_{N} =  \prod_{k=0}^{N-1}\left(1 + A_{1,q^{-2k}}^{-1} \right).
\end{equation*}
One can write
\begin{equation*}
\tilde{\chi}_{N}  = \sum_{m=0}^{N-1}\sum_{0\leq k_{1}<k_{2}<\cdots < k_{m} \leq N-1}\prod_{i=1}^{m}A_{1,q^{-2k_{i}}}^{-1}.
\end{equation*}
This formal power series has a limit
\begin{equation}\label{limitqchar}
\tilde{\chi}_{N} \xrightarrow[N\to+\infty]{}  \sum_{m=0}^{+\infty}\sum_{0\leq k_{1}<k_{2}<\cdots < k_{m}}\prod_{i=1}^{m}A_{1,q^{-2k_{i}}}^{-1} \quad \in \mathbb{Z}[[A_{1,q^{-2k}}^{-1}]]_{k\in\mathbb{N}}.
\end{equation}
\end{ex}

\begin{rem}\begin{enumerate}[(i)]\item Except for the weight 0, all weight spaces are infinite-dimensional. Thus this formula is not the (normalized) $q$-character of a representation in the category $\cO$. Nor is the result a formal power series is the "classical" sense. Nonetheless, it can still be a $q$-character, as stated in Remark \ref{remqchar}.

	\item We will see later that this $q$-character also has a decomposition into a sum of $q$-characters of simple representations, as in (\ref{decompGR}).
	\end{enumerate}
\end{rem}

Let us generalize the statement of Theorem \ref{proplimqchar}.
Let $\pmb\Psi$ be a negative $\ell$-weight in $P_{\ell}^{-}$. It can be written as a finite product of $(Y_{i,a})_{i\in I, a \in \mathbb{C}^{\times}}$, $(\pmb\Psi_{i,a}^{-1})_{i\in I, a \in \mathbb{C}^{\times}}$ and $[\omega ]$, with $\omega \in P_{Q}$. Let us write
\begin{equation}\label{decompPsiOm}
\pmb\Psi = [\omega ] \times \prod_{k=1}^{r} Y_{i_{k},a_{k}} \times \prod_{l=1}^{s} \pmb\Psi_{j_{l},b_{l}}^{-1}.
\end{equation}
Each factor $\pmb\Psi_{i,a}^{-1}$ can be seen as a limit of $\prod\tilde{Y}_{i,aq_{i}^{-2k-1}}$. For $N\geq 1$, consider the finite-dimensional standard module $S_{N}$, which is the tensor product of the $V_{i_{k},a_{k}}$, for $1\leq k\leq r$, and the $\overrightarrow{ \bigotimes_{k=0}^{N-1}}V_{j_{l},b_{l}q_{j_{l}}^{-2k-1}}$, for $1\leq l\leq s$, ordered so as to satisfy the condition (\ref{sstar}) of Section \ref{sectstand} (this is a direct generalization of the $S_{N}$ defined in (\ref{fdsm})). 
Then, 
\begin{cor}
The sequence of normalized $q$-characters of $S_{N}$ converges as $N\to +\infty$, as a formal power series. The limit $\chi^{\infty}_{\pmb\Psi}$ can be written
\begin{equation*}
\chi^{\infty}_{\pmb\Psi} = \prod_{k=1}^{r}\left(\tilde{\chi}_{q}(V_{i_{k},a_{k}}) \right)\cdot\prod_{l=1}^{s} \chi_{j_{l},b_{l}}^{\infty}\quad \in \mathbb{Z}[[A_{i,a}^{-1}]]_{i\in I, a \in \mathbb{C}^{\times}}.
\end{equation*} 
\end{cor}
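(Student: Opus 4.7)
The plan is to reduce directly to Theorem \ref{proplimqchar} by exploiting the multiplicativity of the normalized $q$-character on tensor products of finite-dimensional representations. First, since $\chi_{q}$ is a ring morphism on the Grothendieck ring of finite-dimensional $\Uqg$-modules, and since the highest $\ell$-weight of a tensor product of fundamental representations is the product of the highest $\ell$-weights of the factors, the normalized $q$-character of $S_{N}$ factorizes as
\begin{equation*}
\tilde{\chi}_{q}(S_{N}) = \prod_{k=1}^{r}\tilde{\chi}_{q}(V_{i_{k},a_{k}}) \cdot \prod_{l=1}^{s}\prod_{k=0}^{N-1}\tilde{\chi}_{q}\bigl(V_{j_{l},b_{l}q_{j_{l}}^{-2k-1}}\bigr).
\end{equation*}
In particular, the ordering condition (\ref{sstar}) used to define $S_{N}$ is irrelevant to this $q$-character computation, and the factor $[\omega]$ appearing in the decomposition (\ref{decompPsiOm}) of $\pmb\Psi$ does not enter at all since $S_{N}$ does not involve it.

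Next, I would observe that for each fixed $l \in \{1,\ldots,s\}$, the inner product $\prod_{k=0}^{N-1}\tilde{\chi}_{q}\bigl(V_{j_{l},b_{l}q_{j_{l}}^{-2k-1}}\bigr)$ coincides with the normalized $q$-character $\tilde{\chi}_{N}$ of Theorem \ref{proplimqchar} applied to the pair $(i,a) = (j_{l},b_{l})$. By that theorem, this sequence belongs to $\mathbb{Z}[A_{j,b_{l}q^{m}}^{-1}]_{j\in I, m\in \mathbb{Z}}$ and converges, as $N \to +\infty$, to $\chi_{j_{l},b_{l}}^{\infty}$ in the formal power series ring $\mathbb{Z}[[A_{i,a}^{-1}]]_{i\in I,a\in\mathbb{C}^{\times}}$.

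The remaining and main point to verify is that the product of these $s$ convergent factors, together with the finite prefactor $\prod_{k=1}^{r}\tilde{\chi}_{q}(V_{i_{k},a_{k}})$, converges to the product of the individual limits in $\mathbb{Z}[[A_{i,a}^{-1}]]_{i\in I, a\in\mathbb{C}^{\times}}$. I would argue exactly as in the proof of Theorem \ref{proplimqchar}: fix a monomial $M$ in the variables $A_{i,a}^{-1}$. From the estimate $\tilde{\chi}_{q}(V_{j,b}) = 1 + \sum m$ with $m \leq A_{j,bq_{j}}^{-1}$ that was already used in the proof of Theorem \ref{proplimqchar}, a factor $V_{j_{l},b_{l}q_{j_{l}}^{-2k-1}}$ can contribute a term different from $1$ to the multiplicity of $M$ only if $A_{j_{l},b_{l}q_{j_{l}}^{-2k}}^{-1}$ divides $M$. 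Since $M$ has only finitely many divisors, this excludes all but finitely many pairs $(l,k)$ in the $s$ infinite products. Hence the coefficient of $M$ in $\tilde{\chi}_{q}(S_{N})$ is eventually constant in $N$, and its limiting value is the coefficient of $M$ in the stated product, which yields simultaneously the convergence and the factorization formula.

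I expect the main obstacle to be purely bookkeeping: ensuring that the finite-contribution argument from the proof of Theorem \ref{proplimqchar} applies uniformly across all $l \in \{1,\ldots,s\}$ at once, so that the product of $s$ pointwise-convergent sequences really does converge to the product of the limits in the topology described in Remark \ref{remfps}. Since this is a finite union of the one-sequence arguments, and since the prefactor $\prod_{k=1}^{r}\tilde{\chi}_{q}(V_{i_{k},a_{k}})$ is a fixed polynomial, no new difficulty arises beyond what is already present in Theorem \ref{proplimqchar}.
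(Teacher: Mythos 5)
Your proposal is correct and follows exactly the route the paper intends: the corollary is stated without proof as an immediate consequence of Theorem \ref{proplimqchar}, and your argument (factorize $\tilde{\chi}_{q}(S_{N})$ via multiplicativity of $q$-characters, identify each infinite factor with the sequence of Theorem \ref{proplimqchar}, then rerun the finite-contribution argument monomial by monomial to pass to the product of limits) is precisely the expected justification. No gaps.
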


		\subsection{A conjecture}	

As explained in the previous section, the formal power series 
\begin{equation*}
\chi_{\pmb\Psi}^{\infty} = \lim_{N\to + \infty} \tilde{\chi}_{q}(S_{N})
\end{equation*}
is a good candidate for the normalized $q$-character of the asymptotical standard module associated to the negative $\ell$-weight $\pmb\Psi$.

We would like to build, for all negative $\ell$-weight $\pmb\Psi$, a $\Uqb$-module $M(\pmb\Psi)$ with finite-dimensional $\ell$-weight spaces, whose $q$-character is $[\pmb\Psi]\cdot\chi_{\pmb\Psi}^{\infty}$. 
The following conjecture claims that a module with the right $q$-character and which retains some structure of the finite-dimensional standard modules exists.

\begin{conj}\label{conjqchar}
For all negative $\ell$-weight $\pmb\Psi$, there exists a $\Uqb$-module $M(\pmb\Psi)$ with finite-dimensional $\ell$-weight spaces, such that $\chi_{q}(M(\pmb\Psi)) = [\pmb\Psi]\cdot\chi_{\pmb\Psi}^{\infty}$ and the sum of these $\ell$-weights spaces is a $\Uqbp$-module containing a sub-$\Uqb^{+}$-module isomorphic to $S_{N}$ for any $N\geq 0$.
\end{conj}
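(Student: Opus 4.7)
The plan is to construct $M(\pmb\Psi)$ in two stages, following the scheme announced for Section \ref{sectgenstand}: first build a $\Uqbp$-module $T$ whose normalized $q$-character equals $\chi_{\pmb\Psi}^{\infty}$ and which contains every $S_{N}$ as a $\Uqb^{+}$-submodule, then induce along $\Uqbp \hookrightarrow \Uqb$ to obtain a $\Uqb$-module with the prescribed $q$-character.

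For the first stage, I would set up a compatible system of $\Uqb^{+}$-equivariant embeddings $\iota_{N}\colon S_{N}\hookrightarrow S_{N+1}$ coming from the coproduct. Using Lemma \ref{lemtwotens} to reorder tensor factors, one can write $S_{N+1}\simeq S_{N}\otimes V_{j_{l},b_{l}q_{j_{l}}^{-2N-1}}$ for the appropriate newly added fundamental factor; denoting by $v_{N+1}^{+}$ its highest $\ell$-weight vector, the map $v\mapsto v\otimes v_{N+1}^{+}$ is a $\Uqb^{+}$-morphism because Proposition \ref{propappcoprod} gives $\Delta(x_{i,m}^{+})\in x_{i,m}^{+}\otimes 1+\Uqb\otimes (\Uqb X^{+})$ while $X^{+}\cdot v_{N+1}^{+}=0$. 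Setting $T:=\varinjlim_{N}S_{N}$ then produces a $\Uqb^{+}$-module. Its $\ell$-weight spaces are finite-dimensional by exactly the stabilization argument in the proof of Theorem \ref{proplimqchar}: for any fixed monomial $m$ in the $A_{j,b}^{-1}$ only finitely many of the added fundamental tensor factors can contribute to the coefficient of $m$, so each $\ell$-weight space of $T$ eventually coincides with its image in some $S_{N}$, and $\tilde{\chi}_{q}(T)=\chi_{\pmb\Psi}^{\infty}$.

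The second stage is to promote the $\Uqb^{+}$-action on $T$ to a $\Uqbp$-action. Here one uses the other half of Proposition \ref{propappcoprod}: the approximation $\Delta(h_{i,r})\in h_{i,r}\otimes 1+1\otimes h_{i,r}+\tilde{\mathcal{U}}_{q}^{-}(\mathfrak{b})\otimes \tilde{\mathcal{U}}_{q}^{+}(\mathfrak{b})$ shows that, after absorbing the eigenvalue of $h_{i,r}$ on $v_{N+1}^{+}$ into the normalization, the action of $h_{i,r}$ on $\iota_{N}(S_{N})$ differs from its original action on $S_{N}$ only by elements of $\tilde{\mathcal{U}}_{q}^{-}(\mathfrak{b})\cdot\tilde{\mathcal{U}}_{q}^{+}(\mathfrak{b})\cdot v_{N+1}^{+}=0$. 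Hence the operators are compatible with $\iota_{N}$ and descend to well-defined endomorphisms of $T$; one then checks that they verify the defining relations of $\Uqbp$ on each $\ell$-weight vector (inherited from the $S_{N}$) and that the eigenvalues on the highest $\ell$-weight vector of $T$ are precisely $\pmb\Psi$.

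Finally, I would set $M(\pmb\Psi):=\Uqb\otimes_{\Uqbp} T$ and check that $\chi_{q}(M(\pmb\Psi))=[\pmb\Psi]\cdot\chi_{\pmb\Psi}^{\infty}$ with finite-dimensional $\ell$-weight spaces. The hard part is exactly this last verification: induction along $\Uqbm$ a priori enlarges each weight space drastically, so one needs a sharp description of how $\Uqbm$ acts in order to separate the relevant $\ell$-weight spaces and match them to the coefficients of $\chi_{\pmb\Psi}^{\infty}$. For $\mathfrak{g}=\hat{\mathfrak{sl}_{2}}$, the explicit presentation of $\Uqbm$ recalled in Remark \ref{remUqsl2m} allows a hands-on analysis, which is what Theorem \ref{theoPsi1'} carries out; for general $\mathfrak{g}$ the lack of such a presentation is the essential obstruction and the reason the statement must be formulated as a conjecture.
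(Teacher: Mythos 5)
Your proposal follows essentially the same two-stage route as the paper: realize the limit $\chi_{\pmb\Psi}^{\infty}$ on an infinite tensor product carrying a $\Uqbp$-action via the approximate coproduct formulas of Proposition \ref{propappcoprod} (this is Theorem \ref{theoPsi1}, including the normalization of the $h_{r}$-eigenvalue coming from the highest-weight tail), and then induce to $\Uqb$ and use the PBW-type basis and gradings of $\Uqbm$ to show the $\ell$-weight vectors of the induced module all lie in $1\otimes T$, so the $q$-character is preserved (Theorem \ref{theoPsi1'} via Lemma \ref{lemlweight} and Proposition \ref{proptheo'}). You are also right that this only settles the statement for $\mathfrak{g}=\hat{\mathfrak{sl}_{2}}$, which is exactly what the paper does: the statement is a conjecture for general $\mathfrak{g}$, and the obstruction you name --- controlling the $\ell$-weight spaces after induction along $\Uqbm$ without an explicit presentation of that algebra --- is precisely where the paper's argument is type-specific.
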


\begin{rem}
Let us note here that the resulting module is not necessarily equal to the sum of its $\ell$-weight spaces. We will see later that even for $\mathfrak{g}=\hat{\mathfrak{sl}_{2}}$, for certain $\ell$-weights $\pmb\Psi$, there is no $\Uqb$-module $M(\pmb\Psi)$ satisfying the properties of the Conjecture, which is the sum of its $\ell$-weight spaces.
\end{rem}

Section \ref{sectgenstand} will prove this conjecture when $\mathfrak{g}=\hat{\mathfrak{sl}_{2}}$.

	\section{Asymptotical standard modules}\label{sectgenstand}
	
From now on, let us assume that $\mathfrak{g}=\hat{\mathfrak{sl}_{2}}$. We hope in other work to extend these results to the type A and then to other types.

For simplicity, we omit the notation for the node from now on: $Y_{1,a}=Y_{a}$ and $\Psi_{1,a}=\Psi_{a}$.

The aim of this section is to prove Conjecture \ref{conjqchar} for $\mathfrak{g}=\hat{\mathfrak{sl}_{2}}$. We build, for every negative $\ell$-weight $\pmb\Psi$ such that $L(\pmb\Psi)$ is in $\OmZ$, an asymptotical standard $\Uqb$-module $M(\pmb\Psi)$. This module has finite-dimensional $\ell$-weight spaces and its $q$-character is the formal power series $\chi_{\pmb\Psi}^{\infty}$.

The construction will take place in two parts 
\begin{enumerate}
	\item Recall the definition of $\Uqbp$ in (\ref{defUqbp}). First, we build a $\Uqbp$-module $T$, with finite-dimensional $\ell$-weight spaces and the correct $q$-character. 
	\item Then, we build by induction from $T$ a $\Uqb$-module $T^{c}$. We show that $T^{c}$ still has finite-dimensional $\ell$-weight spaces and the same $q$-character.
\end{enumerate}

	\subsection{Construction of the $\Uqbp$-module.}
		\subsubsection{Fundamental example: the case where $\pmb\Psi=\pmb\Psi_{1}^{-1}$}\label{exPsi1}

We build a $\Uqbp$-module $T$ with finite-dimensional $\ell$-weight spaces whose $q$-character is exactly the limit in (\ref{limitqchar}).

For $N\geq 1$, consider the standard module $S_{N}$ as  defined in (\ref{fdsm}), and the normalized standard module $\tilde{S}_{N}$, obtained by normalizing the $\ell$-weights in each factor of $S_{N}$.
\begin{equation}\label{tildeSn}
\tilde{S_{N}} : = L(\tilde{Y}_{q^{-1}})\otimes L(\tilde{Y}_{q^{-3}})\otimes L(\tilde{Y}_{q^{-5}})\otimes \cdots \otimes L(\tilde{Y}_{q^{-2N+1}}).
\end{equation}

As in example \ref{exfond}, each one of the factors of $\tilde{S}_{N}$ is a two dimensional representation
\begin{equation*}
L(\tilde{Y}_{q^{-2r-1}}) = \mathbb{C}v_{r}^{+}\oplus \mathbb{C}v_{r}^{-},
\end{equation*}
where $v_{r}^{+}$ is of $\ell$-weight $\tilde{Y}_{q^{-2r-1}}$ and $v_{r}^{-}$ is of $\ell$-weight $[-2\omega_{1}](\tilde{Y}_{q^{-2r+1}})^{-1}$.

Let $\mathcal{P}_{f}(\mathbb{N})$ be the set of finite subsets of $\mathbb{N}$. Let
\begin{equation}\label{Tpsi1}
T:= \bigoplus_{J\in \mathcal{P}_{f}(\mathbb{N})}\mathbb{C}v_{J}.
\end{equation}

For $J\in \mathcal{P}_{f}(\mathbb{N})$, the element $v_{J}$ represents the infinite simple tensor:
\begin{equation*}
v_{J}= v_{-1}^{\epsilon_{0}(J)}\otimes v_{-3}^{\epsilon_{1}(J)}\otimes v_{-5}^{\epsilon_{2}(J)}\otimes  v_{-7}^{\epsilon_{3}(J)}\otimes \cdots,
\end{equation*}
where $\epsilon_{r}(J) = \left\lbrace \begin{array}{ll}
									- &   \text{if } r\in J, \\
									+ &  \text{if } r\notin J.
\end{array} \right.$
\begin{ex}
For example, $\left\lbrace \begin{array}{l}
				 v_{\emptyset} = v_{-1}^{+}\otimes v_{-3}^{+}\otimes v_{-5}^{+} \otimes v_{-7}^{+}\otimes \cdots \\
				 v_{\{2\}}=  v_{-1}^{+}\otimes v_{-3}^{+}\otimes v_{-5}^{-}\otimes v_{-7}^{+}\otimes \cdots \end{array} \right.$.
\end{ex}

Now, we endow the vector space $T$ with a $\Uqbp$-module structure. The key ingredients are the approximate coproducts formulas of Proposition \ref{propappcoprod}. Let us recall their expressions in this context. For $r >0$ and $m\in \mathbb{Z}$, 
\begin{equation}\label{apprcoprod1}
\Delta(h_{r}) \in h_{r}\otimes 1 + 1\otimes h_{r} + \tilde{\mathcal{U}}_{q}^{-}(\mathfrak{b})\otimes \tilde{\mathcal{U}}_{q}^{+}(\mathfrak{b}),
\end{equation}
\begin{equation}\label{apprcoprod2}
\Delta(x_{m}^{+}) \in x_{m}^{+}\otimes 1 + \Uqb\otimes (\Uqb X^{+}),
\end{equation}
where $\tilde{\mathcal{U}}_{q}^{+}(\mathfrak{b})$ (resp. $\tilde{\mathcal{U}}_{q}^{-}(\mathfrak{b})$) is the subalgebra of $\Uqb$ consisting of elements of positive (resp. negative) degree, and $X^{+} = \sum_{m\in \mathbb{Z}}\mathbb{C}x_{m}^{+} \subset \mathcal{U}_{q}^{+}(\mathfrak{b})$.

Let $J \in \mathcal{P}_{f}(\mathbb{N})$ and $N> i_{0}:=\max (J)$. Consider the truncated simple tensor $v_{J}^{(N)}$, which is an element of the $\Uqb$-module $\tilde{S}_{N}$ defined in (\ref{tildeSn}),
\begin{equation*}
v_{J}^{(N)} := v_{-1}^{\epsilon_{0}(J)}\otimes v_{-3}^{\epsilon_{1}(J)}\otimes v_{-5}^{\epsilon_{2}(J)}\otimes \cdots \otimes  v_{-2N+1}^{\epsilon_{N}(J)}\quad \in \tilde{S}_{N},
\end{equation*}
where $\epsilon_{r}(J) = \left\lbrace \begin{array}{ll}
									- &   \text{if } r\in J, \\
									+ &  \text{if } r\notin J.
\end{array} \right.$

	\paragraph{Action of $\mathcal{U}_{q}(\mathfrak{b})^{+}$:}

The algebra $\mathcal{U}_{q}(\mathfrak{b})^{+}$ is generated by the $(x_{m}^{+})_{m\geq 0}$. 

 For $N> i_{0}$, write 
 \begin{equation}\label{truncvI}
 v_{J}^{(N)} = v_{J}^{(i_{0})}\otimes v_{-2i_{0}-3}^{+}\otimes \cdots \otimes v_{-2N+1}^{+} = v_{J}^{(i_{0})}\otimes u^{(N)},
 \end{equation}
Then, using (\ref{apprcoprod2}), one has, for $m\geq 0$,
  \begin{equation*}
   x_{m}^{+}\cdot  v_{J}^{(N)} = \left(x_{m}^{+}\cdot  v_{J}^{(i_{0})}\right)\otimes u^{(N)} + 0,
  \end{equation*}
  as $\Uqb X^{+}$ acts by $0$ on $u^{(N)}$, which is a highest $\ell$-weight vector and $v_{J}^{(i_{0})}\in \tilde{S}_{i_{0}}$, which is a $\Uqb$-module.
  That way the action of $(x_{m}^{+})_{m\geq 0}$ on $v_{J}$ is defined as 
   \begin{equation}\label{actionxm}
   x_{m}^{+}\cdot  v_{J}:= \left(x_{m}^{+}\cdot  v_{J}^{(i_{0})}\right)\otimes u,
  \end{equation}
 where $u =  v_{-2i_{0}-3}^{+}\otimes \cdots \otimes v_{-2N+1}^{+}\otimes \cdots$.
As $x_{m}^{+}\cdot  v_{J}^{(i_{0})} \in \tilde{S}_{i_{0}}$, it is a linear combination of $v_{K}^{(i_{0})}$, with $\max(K)\leq i_{0}$. Hence,  $x_{m}^{+}\cdot  v_{J}$ is the same linear combination, but with the $v_{K}$ instead of the $v_{K}^{(i_{0})}$.
 
	\paragraph{Action of $\mathcal{U}_{q}(\mathfrak{b})^{0}$:}

The algebra $\mathcal{U}_{q}(\mathfrak{b})^{0}$ is generated by the $(h_{r},k_{1}^{\pm 1})_{r\geq 1}$.  
 
As we normalized the action of $k_{1}$, for all $N>i_{0}$, $k_{1}\cdot v_{\mathcal{I}}^{(N)} = q^{-2|J|}v_{J}^{(N)}$. 
Hence, naturally 
\begin{equation}\label{actionk}
k_{1}\cdot v_{J} := q^{-2|J|}v_{J}.
\end{equation}
For the action of the $h_{r}$'s, let us write, for $N>i_{0}$, $v_{J}^{(N)}$ as in (\ref{truncvI}). Then, using (\ref{apprcoprod1}), one has, for $r\geq 1$, 
\begin{align*}
h_{r}\cdot v_{J}^{(N)}  = \left(h_{r}\cdot v_{J}^{(i_{0})}\right)\otimes u^{(N)}+ v_{J}^{(i_{0})}\otimes \left( h_{r}\cdot u^{(N)}\right)+0, 
\end{align*}
as $\tilde{\mathcal{U}}_{q}^{+}(\mathfrak{b})$ sends $u^{(N)}$ to a higher weight space, which is $\{ 0\}$.

The vector $u^{(N)}\in \tilde{S}_{N}$ is a highest $\ell$-weight vector of $\ell$-weight $\tilde{Y}_{q^{-2i_{0}-3}}\tilde{Y}_{q^{-2i_{0}-5}}\cdots \tilde{Y}_{q^{-2N+1}}$. Hence, $h_{r}\cdot u^{(N)} = \frac{q^{-2i_{0}-2}-q^{-2N}}{q-q^{-1}}u^{(N)}$. 

Thus it is natural to define, for $r\geq 1$, and $N>i_{0}$
\begin{equation}\label{actionhr}
h_{r}\cdot v_{J} := \left(\left(h_{r} + \frac{q^{-2i_{0}-2}}{q-q^{-1}}\id\right)\cdot v_{J}^{(i_{0})}\right)\otimes u.
\end{equation}
As before, $\left(h_{r} + \frac{q^{-2i_{0}-2}}{q-q^{-1}}\id\right)\cdot v_{J}^{(i_{0})} \in \tilde{S}_{i_{0}}$ is a linear combination of $v_{K}^{(i_{0})}$, and $h_{r}\cdot v_{J} $ is the same linear combination of $v_{K}$.

\begin{ex} As $\Delta(h_{1}) = h_{1}\otimes 1 + 1 \otimes h_{1} - (q^{2}-q^{-2})x_{1}^{-}\otimes x_{0}^{+}$, then $h_{1}\cdot \left(v_{-1}^{+}\otimes v_{-3}^{-}\right)  = -q^{-1}(q^{2}-q^{-2})v_{-1}^{-}\otimes v_{-3}^{+}$. Hence,
\begin{equation*}
h_{1}\cdot v_{\{1\}} = \frac{q^{-4}}{q-q^{-1}}v_{\{1\}} - q^{-1}(q^{2}-q^{-2})v_{\{0\}}.
\end{equation*}

Moreover, $h_{1}\cdot v_{\{0\}} = (-q+ \frac{q^{-2}}{q-q^{-1}})v_{\{0\}}$, and $h_{1}(v_{\{1\}}-v_{\{0\}}) = \frac{q^{-4}}{q-q^{-1}}(v_{\{1\}}-v_{\{0\}})$. We will see that $v_{\{0\}}$ is of $\ell$-weight $(\pmb\Psi_{1})^{-1}A_{1}^{-1}$ and $v_{\{1\}}-v_{\{0\}}$ is of $\ell$-weight $(\pmb\Psi_{1})^{-1}A_{q^{-2}}^{-1}$.
\end{ex}

The combination of the last two paragraphs gives us the following result. As the actions of the generators of $\Uqbp$ on $T$ defined in (\ref{actionxm}), (\ref{actionk}) and (\ref{actionhr}) are based on actions on finite-dimensional $\Uqb$-modules, they naturally satisfy the relations in $\Uqbp$.

\begin{prop}
The vector space $T$ has a $\Uqbp$-module structure.
\end{prop}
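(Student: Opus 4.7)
The plan is to verify the defining relations of $\Uqbp$ on $T$ by pulling them back, through a central scalar shift, from the $\Uqb$-module structure on the finite-dimensional normalised standard modules $\tilde{S}_{N}$. For each $N \geq 1$, set
\[
T_{N} \;:=\; \bigoplus_{K \subseteq \{0,\ldots,N-1\}} \mathbb{C}\,v_{K}
\]
and introduce the linear bijection $\pi_{N} : \tilde{S}_{N} \to T_{N}$, $v_{K}^{(N)} \mapsto v_{K}$. From the formulas (\ref{actionxm}), (\ref{actionk}) and (\ref{actionhr}) each $T_{N}$ is stable under the generators of $\Uqbp$ acting on $T$: for $v_{J} \in T_{N}$ with $i_{0} = \max J$, every generator acts first on $v_{J}^{(i_{0})} \in \tilde{S}_{i_{0}}$ and so produces only $v_{K}$'s with $\max K \leq i_{0} < N$. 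Since $T = \bigcup_{N} T_{N}$, it suffices to verify the relations of $\Uqbp$ on each $T_{N}$.

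Fix $N$ and $v_{J} \in T_{N}$. For $g = x_{m}^{+}$ or $g = k_{1}^{\pm 1}$, the approximate coproduct (\ref{apprcoprod2}), together with the fact that the highest $\ell$-weight tail $u^{(N)}$ is killed by $\Uqb X^{+}$, and a direct weight check, give $g \cdot v_{J} = \pi_{N}(g \cdot v_{J}^{(N)})$, so the two actions intertwine with no correction. For $g = h_{r}$, (\ref{apprcoprod1}) and the fact that $h_{r}$ acts on $u^{(N)}$ by a scalar $\lambda_{r}(i_{0},N)$ yield, after comparison with (\ref{actionhr}),
\[
h_{r} \cdot v_{J} \;=\; \pi_{N}\bigl((h_{r} + \mu_{r,N})\cdot v_{J}^{(N)}\bigr),
\]
where $\mu_{r,N}$ is a scalar depending only on $r$ and $N$: the $i_{0}$-dependent part of $\lambda_{r}(i_{0},N)$ cancels against the shift designed into the definition (\ref{actionhr}).

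Consider now the modified action $\hat{\rho}_{N}$ of $\Uqbp$ on $\tilde{S}_{N}$ in which $x_{m}^{+}$ and $k_{1}^{\pm 1}$ are unchanged while $\hat{h}_{r,N} := h_{r} + \mu_{r,N}\id$. Every defining relation of $\Uqbp$ involving an $h_{r}$, namely $[h_{r},h_{s}] = 0$, $[k_{1},h_{r}] = 0$ and the commutation (\ref{relhx}), is a commutator unaffected by central scalar shifts of the $h_{r}$'s; the Serre-type relations of the form (\ref{eqUqbmg}) among the $x_{m}^{+}$'s involve no $h_{r}$'s and are therefore automatically preserved. Hence $\hat{\rho}_{N}$ is a bona fide $\Uqbp$-module structure on $\tilde{S}_{N}$, and since $\pi_{N}$ intertwines $\hat{\rho}_{N}$ with the action defined on $T_{N}$ by (\ref{actionxm})--(\ref{actionhr}), the latter inherits all the defining relations of $\Uqbp$. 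This proves that $T$ carries a $\Uqbp$-module structure.

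The main technical point is the explicit bookkeeping of the scalar $\mu_{r,N}$ for arbitrary $r$ and the verification that it is independent of $v_{J}$: one must compute the $h_{r}$-eigenvalue of the tail $u^{(N)}$ from its $\ell$-weight $\prod_{k=i_{0}+1}^{N-1} \tilde{Y}_{q^{-2k-1}}$ and check that the $i_{0}$-dependent contributions exactly cancel the shift built into (\ref{actionhr}). It is precisely this cancellation that makes the shift $\mu_{r,N}$ a scalar globally on $\tilde{S}_{N}$ and hence makes $\hat{\rho}_{N}$ a representation of $\Uqbp$.
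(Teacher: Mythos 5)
Your proof is correct and takes essentially the same route as the paper, whose own proof is the single observation that the action on $T$ is "based on actions on finite-dimensional $\Uqb$-modules"; your write-up simply makes precise the one genuinely non-trivial point, namely that via $\pi_N$ the $h_r$-action on $T_N$ differs from the honest action on $\tilde{S}_N$ by a scalar $\mu_{r,N}$ that is independent of $J$, so that all relations (being commutators in the $h_r$'s plus relations among the $x_m^+$ alone) are inherited. One caveat worth recording: for the $i_0$-dependent contributions to cancel as you claim, the shift in (\ref{actionhr}) must be taken to be the general-$r$ tail eigenvalue $\frac{[r]_q}{r}\sum_{k>i_0}q^{-(2k+1)r}$ (the displayed constant $\frac{q^{-2i_0-2}}{q-q^{-1}}$ is its $r=1$ specialization), which is exactly how you propose to compute it from the $\ell$-weight of $u^{(N)}$.
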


Moreover, the action of $\Uqbh$ on the $v_{J}$'s is upper-triangular, which allows us to explicit $\ell$-weight vectors. 

\paragraph{Partial order on $\mathcal{P}_{f}(\mathbb{N})$:} 
We define some order on $\mathcal{P}_{f}(\mathbb{N})$. It has nothing to do with Nakajima's partial order on the $\ell$-weights recalled in (\ref{Nakajimaorder}). 
\begin{rem}
For all $J\in \PfN$, $v_{J}$ is a weight vector of weight $[|J|\omega_{1}]$. Moreover, from (\ref{actionhr}), for all $r\geq 1$,
\begin{equation*}
h_{r}\cdot v_{J} \in \bigoplus_{|K|=|J|, K\subset \text{Conv}(J)}\mathbb{C}v_{K},
\end{equation*}
where $\text{Conv}(J)$ is the convex hull of $J$, $\text{Conv}(J) = \left\lbrace j\in \mathbb{N} \mid \min(J) \leq j \leq \max(J) \right\rbrace$.

In particular, sets in $\mathcal{P}_{f}(\mathbb{N})$ correspond to $\ell$-weights which are generically incomparable for Nakajima's partial order.
\end{rem}
\begin{defi}\label{deforder}
For all $N\in \mathbb{N}$, let $\mathcal{P}_{N}(\mathbb{N}) :=\{ J\in \PfN \mid |J| = N \}$. The set $\mathcal{P}_{N}(\mathbb{N})$ is equipped with the lexicographic order on $N$-tuples, noted $\preceq$ .
\end{defi}

\begin{lemme}\label{lemtrig}
For $J\in \PfN$ and $r\geq 1$,
\begin{equation*}
h_{r}\cdot v_{J} \in h_{r,J}v_{J} + \sum_{K\subset \text{Conv}(J), K\prec J}\mathbb{C}v_{K},
\end{equation*}
where the $h_{r,J}$ are the coefficients arising from the action of $h_{r}$ on each component of $v_{J}$.
\end{lemme}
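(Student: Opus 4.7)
The plan is to reduce everything to a finite computation inside the tensor product $\tilde{S}_{i_{0}}$: by (\ref{actionhr}) one has $h_{r}\cdot v_{J}=\bigl[(h_{r}+\tfrac{q^{-2i_{0}-2}}{q-q^{-1}}\id)\cdot v_{J}^{(i_{0})}\bigr]\otimes u$, so the scalar shift gets absorbed into the diagonal coefficient $h_{r,J}$, and it remains to analyse the action of $h_{r}$ on the finite simple tensor $v_{J}^{(i_{0})}\in\tilde{S}_{i_{0}}$. I would then expand this action by iterating the approximate coproduct (\ref{apprcoprod1}), writing $\Delta^{(i_{0})}(h_{r})$ as a sum of simple tensors $Y_{0}\otimes\cdots\otimes Y_{i_{0}}$ whose factors lie either in $\tilde{\mathcal{U}}_{q}^{-}(\mathfrak{b})$, in $\tilde{\mathcal{U}}_{q}^{+}(\mathfrak{b})$, or in the degree-zero subalgebra (in particular the summands $1$ and $h_{r}$). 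The ``diagonal'' summands $1^{\otimes k}\otimes h_{r}\otimes 1^{\otimes i_{0}-k}$ act on $v_{J}^{(i_{0})}$ by the sum of the scalars by which $h_{r}$ acts on each factor $v_{k}^{\epsilon_{k}(J)}$; combined with the shift this is exactly $h_{r,J}\,v_{J}$.

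For each remaining, off-diagonal, term I would argue using the $Q$-grading: a tensor factor of strictly positive degree can only act nontrivially on a basis vector $v_{k}^{-}$ (sending it to a multiple of $v_{k}^{+}$), so its position $k$ must lie in $J$; similarly a factor of strictly negative degree must act at a position outside $J$. Total degree zero then forces equal cardinalities $|A|=|B|$, where $A\subset J$ is the set of ``raised'' positions and $B\subset\{0,\ldots,i_{0}\}\setminus J$ the set of ``lowered'' ones. The outcome of the action is a scalar multiple of $v_{K}^{(i_{0})}$ with $K=(J\setminus A)\cup B$, and the inclusion $K\subset\text{Conv}(J)$ is already the content of the remark preceding the lemma. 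The heart of the argument is then to show that any non-trivial off-diagonal contribution must satisfy $\min(B)<\min(A)$, which propagates the ``negative-on-the-left, positive-on-the-right'' structure of (\ref{apprcoprod1}). I would prove this by induction on $i_{0}$ using coassociativity, together with (\ref{apprcoprod2}) and its $Q$-graded analogue for $\tilde{\mathcal{U}}_{q}^{-}(\mathfrak{b})$, to track how further splittings redistribute $\pm$-factors along the tensor product, using that a $\tilde{\mathcal{U}}_{q}^{+}$-element acting on a $+$-position and a $\tilde{\mathcal{U}}_{q}^{-}$-element acting on a $-$-position both give zero and thus kill the a priori ``wrong-side'' terms.

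Once the inequality $\min(B)<\min(A)$ is established, the lex-order conclusion is immediate: writing $J=\{j_{1}<\cdots<j_{N}\}$ and $K=\{k_{1}<\cdots<k_{N}\}$ as ordered tuples, the two sets coincide below $\min(A\cup B)=\min(B)$, and at the first differing coordinate $K$ carries the entry $\min(B)$ while $J$ carries an entry lying in $[\min(B)+1,\min(A)]$, so $K\prec J$ in the sense of Definition \ref{deforder}. The main obstacle is clearly the structural claim $\min(B)<\min(A)$, because the iterated coproduct can in principle mix positive and negative factors through intermediate Cartan splittings; the induction must check that such ``mixed'' summands either cancel combinatorially or vanish on the specific simple tensor $v_{J}^{(i_{0})}$.
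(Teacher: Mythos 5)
Your proposal is correct and follows essentially the same route as the paper: reduce to the finite tensor $v_{J}^{(i_{0})}$, iterate the approximate coproduct (\ref{apprcoprod1}), read off the diagonal coefficient $h_{r,J}$, and use the fact that every off-diagonal summand has its leftmost non-identity factor in $\tilde{\mathcal{U}}_{q}^{-}(\mathfrak{b})$ to force the first modified position to be a lowering at a position outside $J$, whence $K\prec J$. The structural claim you flag as the main obstacle is exactly the paper's displayed formula for $\Delta^{N}(h_{r})$, and its induction is harmless because the constraint only concerns the leftmost non-identity factor (split the last tensor slot at each step), so no cancellation of mixed summands is needed.
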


\begin{proof}
Using (\ref{apprcoprod1}) recursively, one has, for $N\geq 1$, 
\begin{multline*}
\Delta^{N}(h_{r})\in \sum_{k=1}^{N} 1\otimes \cdots\otimes \underbrace{h_{r}}_{k}\otimes 1 \otimes \cdots \otimes 1 + 
	\tilde{\mathcal{U}}_{q}^{-}(\mathfrak{b})\otimes \Uqb \otimes \cdots \otimes \Uqb \\+  1\otimes \tilde{\mathcal{U}}_{q}^{-}(\mathfrak{b})\otimes \Uqb \otimes \cdots \otimes \Uqb + \cdots +
1\otimes \cdots\otimes 1 \otimes \tilde{\mathcal{U}}_{q}^{-}(\mathfrak{b})\otimes\tilde{\mathcal{U}}_{q}^{+}(\mathfrak{b}).
\end{multline*}
Hence, using the previous notations, for $N>i_{0}$,
\begin{align*}
h_{r}\cdot v_{J} & = \left(\left(h_{r} + \frac{q^{-2i_{0}-2}}{q-q^{-1}}\id\right)\cdot v_{J}^{(i_{0})}\right)\otimes u 
 \quad \in h_{r,J}v_{J} + \sum_{K\subset \text{Conv}(J), K\prec J}\mathbb{C}v_{K}.
\end{align*}
\end{proof}

\begin{prop}\label{prop1Psi11}
The vector space $T$ has a basis of $\ell$-weights vectors. More precisely, 
\begin{equation*}
T = \bigoplus_{J\in \mathcal{P}_{f}(\mathbb{N})} \mathbb{C}w_{J}, \quad \text{with } w_{J} \text{ of } \ell \text{-weight } (\pmb\Psi_{1})^{-1}\prod_{j\in J}A_{q^{-2j}}^{-1}.
\end{equation*}
As the $\ell$-weight spaces are finite dimensional, one can define a $q$-character for $T$,
\begin{equation}\label{XqT}
\chi_{q}(T) = \sum_{J\in \mathcal{P}_{f}(\mathbb{N})}[\pmb\Psi_{1}^{-1}]\prod_{j\in J}A_{q^{-2j}}^{-1}.
\end{equation}
\end{prop}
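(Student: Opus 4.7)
The plan is to use the upper-triangular structure of the Cartan action provided by Lemma \ref{lemtrig} to construct, by induction along the order $\preceq$, a basis of joint $\Uqbh$-eigenvectors of $T$, one for each $J \in \PfN$.

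First, note that by (\ref{actionk}) the operator $k_1$ already separates $T$ into the weight decomposition $T = \bigoplus_{N\geq 0}T_N$ with $T_N = \bigoplus_{|J|=N}\mathbb{C} v_J$, and that for each $J$ the subspace
\begin{equation*}
V_J \;:=\; \bigoplus_{K\subseteq \text{Conv}(J),\;|K|=|J|}\mathbb{C} v_K
\end{equation*}
is finite-dimensional and stable under every $h_r$, by Lemma \ref{lemtrig}. I would then compute the diagonal entry $h_{r,J}$ explicitly: each factor $v_k^{\pm}$ of the truncated tensor $v_J^{(i_0)}$ is an $\ell$-weight eigenvector of $h_r$ in the two-dimensional module $L(\tilde{Y}_{q^{-2k-1}})$, so $h_{r,J}$ is a telescoping sum of scalar contributions, shifted by the normalization term from (\ref{actionhr}). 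Matching the resulting rational function with the generating function of $\pmb\Psi_J := \pmb\Psi_1^{-1}\prod_{j\in J}A_{q^{-2j}}^{-1}$ (using that $A_{q^{-2j}}^{-1}$ corresponds to $(1-q^{-2j+2}z)/(1-q^{-2j-2}z)$) identifies $h_{r,J}$ with the eigenvalue prescribed by $\pmb\Psi_J$.

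Next, one checks that the $(\pmb\Psi_J)_{J\in\PfN}$ are pairwise distinct, which follows from unique factorization in $\ell$-weights (concretely, the family $\{A_{q^{-2j}}^{-1}\}_{j\geq 0}$ generates a free abelian monoid of $\ell$-weight characters). Combined with the commutativity of $\Uqbh$ and the upper triangularity on each finite-dimensional $V_J$, this makes the action simultaneously diagonalizable: starting from the minimum $J=\{0,1,\dots,|J|-1\}$ of $\mathcal{P}_{|J|}(\mathbb{N})$ in $\preceq$ (where $V_J = \mathbb{C} v_J$ and one sets $w_J := v_J$), one defines $w_J \in V_J$ inductively as
\begin{equation*}
w_J = v_J + \sum_{K \prec J,\; K\subseteq \text{Conv}(J)} \alpha^{J}_{K}\, v_K,
\end{equation*}
with coefficients uniquely determined by requiring $h_r w_J = h_{r,J} w_J$ for every $r$. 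The triangular change of basis $(v_J)\leftrightarrow (w_J)$ then exhibits $T$ as a direct sum of one-dimensional $\ell$-weight spaces $T_{\pmb\Psi_J}=\mathbb{C} w_J$, and the $q$-character formula (\ref{XqT}) follows by summation. The main obstacle is the eigenvalue computation in the second paragraph: one must check that the correction term in (\ref{actionhr}) is tuned precisely so that the telescoping sum of factor eigenvalues converges, as the hidden limit $N\to \infty$, to the eigenvalue associated with the rational $\ell$-weight $\pmb\Psi_J$; once this identification is in place, the diagonalization is standard finite-dimensional linear algebra.
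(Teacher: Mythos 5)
Your proposal follows essentially the same route as the paper's proof: upper triangularity of the $\Uqbh$-action from Lemma \ref{lemtrig}, explicit computation of the diagonal coefficients $h_{r,J}$ and their identification with the generating series of $\pmb\Psi_{1}^{-1}\prod_{j\in J}A_{q^{-2j}}^{-1}$, then simultaneous diagonalization by a triangular change of basis $w_{J}\in v_{J}+\vect(v_{K},\ K\prec J)$. Your explicit appeal to the pairwise distinctness of the $\ell$-weights $\pmb\Psi_{J}$ to pass from triangularizability to diagonalizability is a worthwhile detail that the paper leaves implicit, but it does not change the argument.
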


\begin{rem}
The normalized $q$-character of $T$ is exactly the limit $\chi_{1,1}^{\infty}$ of the normalized $q$-characters of the sequence of standard modules $(S_{N})_{N\geq 1}$ obtained in (\ref{limitqchar}).
\end{rem}

\begin{proof}
By Lemma \ref{lemtrig}, for all $r\in \mathbb{N}^{*}$, $J\in \mathcal{P}_{f}(\mathbb{N})$, 
\begin{equation*}
h_{r}\cdot v_{J} \in \left( \sum_{m=0}^{+\infty }h_{r}^{J,m}\right)v_{J} + \sum_{K\subset \text{Conv}(J), K\prec J}\mathbb{C}v_{K},
\end{equation*}
where as before $h_{r}^{J,m}$ is the coefficient coming from the action of $h_{r}$ on the $m$-th component of $v_{J}$. 

Hence the action of the $(h_{r})_{r\in \mathbb{N}^{*}}$ is simultaneously diagonalizable. Let us look at the diagonal terms. For $r\geq 1, J \in \PfN$,
\begin{align*}
h_{r}^{J,m} = \left\lbrace\begin{array}{l}
				q^{-(2m+1)r}\frac{[r]_{q}}{r} \text{ if } m\notin J \\
				-q^{-(2m-1)r}\frac{[r]_{q}}{r} \text{ if } m\in J
\end{array}\right. .
\end{align*}
Hence,
\begin{equation*}
\sum_{r=1}^{+\infty}\left(\sum_{m=0}^{+\infty }h_{r}^{J,m}\right)z^{r}  = \frac{1}{q-q^{-1}} \log \left(\frac{1}{1-z}\prod_{m\in J}\frac{1-q^{-2m+2}z}{1-q^{-2m-2}z}\right).
\end{equation*}
Thus the vector space $T$ contains a basis of $\ell$-weight vectors $(w_{J})_{J\in \mathcal{P}_{f}(\mathbb{N})}$, where for each $J\in \mathcal{P}_{f}(\mathbb{N})$, $w_{J}$ is a $\ell$-weight $(\pmb\Psi_{1})^{-1}\prod_{j\in J}A_{q^{-2j}}^{-1}$. Moreover, 
\begin{equation*}
w_{J}\in v_{J}+ \vect(v_{K}, K\preceq J, K\neq J).
\end{equation*}
\end{proof}

\begin{rem}However, the action cannot be extended to the full Borel algebra $\Uqb$ the same way. For example, for $N\in \mathbb{N}$ if we consider the truncated pure tensor vector,
\begin{equation*}
v_{\emptyset}^{(N)} = v_{-1}^{+}\otimes v_{-3}^{+}\otimes v_{-5}^{+} \otimes  \cdots v_{-2N+1}^{+}\otimes v_{-2N-1}^{+},
\end{equation*}
then $x_{1}^-$ acts on $v_{\emptyset}^{(N)} \in L(\tilde{Y}_{q^{-1}})\otimes L(\tilde{Y}_{q^{-3}})\otimes  \cdots \otimes L(\tilde{Y}_{q^{-2N-1}}),$ as
\begin{equation*}
x_{1}^{-}\cdot v_{\emptyset}^{(N)}  = \sum_{k=0}^{N}q^{-2k-1}v_{\{k\}}.
\end{equation*}
Which does not have a limit in $T$ as $N \rightarrow +\infty$.
\end{rem}
\begin{rem}\label{remunique}
Hence, if we consider the Conjecture \ref{conjqchar}, then necessarily, the sum of the $\ell$-weight spaces of any potential asymptotical standard module $M(\pmb\Psi_1^{-1})$ is the infinite tensor product $T$ (it is the only $\Uqbp$-module containing a sub-$\Uqb^{+}$-module isomorphic to $S_{N}$ for any $N\geq 0$). As the action on $T$ cannot be extended, the module $M(\pmb\Psi_1^{-1})$ must be defined differently.
\end{rem}

			\subsubsection{Generalization of this construction}\label{GenConst}

Let $\pmb\Psi \in P_{\ell}^{\mathfrak{r}}$ be a negative $\ell$-weight such that $L(\pmb\Psi)$ is in $\OmZ$. In this section, we generalize Proposition \ref{prop1Psi11} to this context.

As seen in example \ref{exgammasl2}, the roots and poles of $\Psi_{1}(z)$ all have the same parity. Let us write $\pmb\Psi$ as a finite product
\begin{equation*}
\pmb\Psi = [\omega]\times m\times \left(\prod_{r=R_{1}}^{R_{2}}\pmb\Psi_{q^{2r}}^{-b_{r}} \right),
\end{equation*}
where $\omega \in P_{\mathbb{Q}}$, $m$ is a monomial in the $(Y_{q^{2l+1}})_{l\in \mathbb{Z}}$ and $b_{r}\in \mathbb{N}$. 

In Section \ref{psi1infty}, we have seen that each $\pmb\Psi_{q_{2r}}^{-1}$ can be written as an infinite product of $(\tilde{Y}_{q^{2k-1}})_{k\leq r}$. Hence, the $\ell$-weight $\pmb\Psi$ can be seen as a limit of a series of monomials, 
\begin{equation*}
\pmb\Psi \approx \lim_{N\to + \infty}[\omega']\prod_{r=-N}^{R}\tilde{Y}_{q^{2r-1}}^{a_{r}}  \quad\left( = \lim_{N\to + \infty} m_{N}\right),
\end{equation*}
where the sequence $(a_{r})_{-\infty < r\leq R}$ is ultimately stationary. 
\begin{rem}
If $\pmb\Psi$ is only a product of $[\omega]$ and  $(Y_{q^{2l+1}})_{l\in \mathbb{Z}}$ (if $L(\pmb\Psi)$ is finite-dimensional, with Proposition \ref{propfd}), this sequence is stationary. In that case, the vector space $T$ is finite-dimensional. 
\end{rem}
We also know that the sequence of normalized $q$-characters of the standard modules associated to $m_{N}$ converges as $N\to + \infty$. With notations from Section \ref{psi1infty}, 
\begin{equation}\label{limchiqpsi}
\chi_{\pmb\Psi}^{\infty} = \lim_{N\to + \infty}\left( \tilde{\chi}_{q}(S_{N})\right) = \lim_{N\to + \infty}\left( \prod_{r=-N}^{R}\left(1+A_{q^{2r}}^{-1}\right)^{a_{r}}\right) \quad\in \mathbb{Z}[[A_{q^{2r}}^{-1}]]_{r\leq R}.
\end{equation}
\begin{theoreme}\label{theoPsi1}
There exists a $\Uqbp$-module $T_{\pmb\Psi}$, with finite dimensional $\ell$-weight spaces, whose $q$-character is
\begin{equation*}
\chi_{q}(T_{\pmb\Psi})=[\pmb\Psi]\cdot\chi_{\pmb\Psi}^{\infty}.
\end{equation*}
\end{theoreme}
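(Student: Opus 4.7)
The plan is to generalize the construction of Section \ref{exPsi1} from $\pmb\Psi = \pmb\Psi_1^{-1}$ to arbitrary negative $\pmb\Psi$ in $\OmZ$, by enlarging the index set from $\mathcal{P}_f(\mathbb{N})$ to finite subsets of a set that accounts for the multiplicities $(a_r)$ in the approximating monomials $m_N$. Write $\pmb\Psi = [\omega'] \prod_{r\leq R} \tilde{Y}_{q^{2r-1}}^{a_r}$ (with $(a_r)$ ultimately stationary as $r\to -\infty$) and set
\begin{equation*}
\Omega := \{(r, k) : r \leq R,\ 1 \leq k \leq a_r\},
\end{equation*}
equipped with the total order $(r,k) \prec (r',k')$ iff $r > r'$, or $r = r'$ and $k < k'$; this order respects the standard-module convention (\ref{sstar}) across distinct spectral parameters, with arbitrary tie-breaking within one. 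To each position $(r,k)\in\Omega$ attach a copy of the two-dimensional fundamental $V_{q^{2r-1}} = \mathbb{C} v^+_{r,k}\oplus \mathbb{C} v^-_{r,k}$, and define
\begin{equation*}
T_{\pmb\Psi} := \bigoplus_{J\in \mathcal{P}_f(\Omega)} \mathbb{C}\, v_J,
\end{equation*}
where $v_J$ is the formal infinite tensor (in the order $\prec$) carrying $v^-$ at positions in $J$ and $v^+$ elsewhere, shifted globally by the one-dimensional module $[\omega']$.

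I then transport the $\Uqbp$-action from finite truncations to $T_{\pmb\Psi}$ exactly as in Section \ref{exPsi1}. For $J\in \mathcal{P}_f(\Omega)$ and $N$ large enough that every $(r,k)\in J$ has $r>-N$, the truncated vector $v_J^{(N)}$ lives in the finite-dimensional normalized standard module $\tilde{S}_N$ obtained by tensoring the $V_{q^{2r-1}}$ for $r > -N$ (with their multiplicities) in the order $\prec$. Using the approximate coproducts (\ref{apprcoprod1})--(\ref{apprcoprod2}), I define $x_m^+\cdot v_J$, $k_1\cdot v_J$ and $h_r\cdot v_J$ by the analogues of (\ref{actionxm})--(\ref{actionhr}): the infinite highest-weight tail is killed by $\tilde{\mathcal{U}}_q^+(\mathfrak{b})$ and by $X^+$, while for $h_r$ the diagonal $\Uqbh$-eigenvalue of the tail is absorbed into a scalar shift (generalizing $\frac{q^{-2i_0-2}}{q-q^{-1}}\id$) calibrated so that the formula is independent of the truncation $N$. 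Since any composition of these operators ultimately factors through a finite-dimensional $\Uqbp$-action on some $\tilde{S}_N$, all defining relations of $\Uqbp$ hold automatically on $T_{\pmb\Psi}$.

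Finally, the analogues of Lemma \ref{lemtrig} and Proposition \ref{prop1Psi11} carry through. Iterating (\ref{apprcoprod1}) shows that each $h_r$ is upper-triangular with respect to the lex order on $\mathcal{P}_f(\Omega)$, so Gaussian elimination in each (finite-dimensional) weight space produces simultaneous $\Uqbh$-eigenvectors $w_J$. A calculation of the diagonal entries strictly parallel to the proof of Proposition \ref{prop1Psi11}, summing position-wise the eigenvalues coming from each $v^\pm_{r,k}$, identifies the $\ell$-weight of $w_J$ as $[\pmb\Psi]\prod_{(r,k)\in J} A_{q^{2r}}^{-1}$. Summing over $J$ gives
\begin{equation*}
\chi_q(T_{\pmb\Psi}) \;=\; [\pmb\Psi] \sum_{J\in\mathcal{P}_f(\Omega)}\prod_{(r,k)\in J} A_{q^{2r}}^{-1} \;=\; [\pmb\Psi] \prod_{r\leq R}\bigl(1+A_{q^{2r}}^{-1}\bigr)^{a_r} \;=\; [\pmb\Psi]\cdot \chi_{\pmb\Psi}^\infty
\end{equation*}
by (\ref{limchiqpsi}).

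The main obstacle is calibrating the $h_r$-shift when the multiplicities $a_r>1$ and the sequence $(a_r)$ remains positive for $r$ arbitrarily negative, so that enlarging the truncation from $N$ to $N+1$ introduces only the prescribed additive constant, with the calibration simultaneously consistent for all $r\geq 1$. Once this bookkeeping is established the remainder is a direct extension of the fundamental case of Section \ref{exPsi1}.
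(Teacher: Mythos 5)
Your proposal is correct and follows essentially the same route as the paper: the same index set $\mathcal{J}=\{(r,k)\mid r\leq R,\ 1\leq k\leq a_{r}\}$ of tensor positions, the same space $\bigoplus_{J}\mathbb{C}v_{J}$ of infinite pure tensors with finitely many lowest-weight components, the same transport of the $\Uqbp$-action from finite truncations via the approximate coproducts (\ref{apprcoprod1})--(\ref{apprcoprod2}), and the same upper-triangularity of $\Uqbh$ with respect to a lexicographic order to extract $\ell$-weight vectors $w_{J}$ of $\ell$-weight $\pmb\Psi\prod_{(r,k)\in J}A_{q^{2r}}^{-1}$. The calibration issue you flag at the end is handled in the paper by the observation that the action on the truncations $\tilde{S}_{N}$ stabilizes as $N\to+\infty$, exactly as in the fundamental case of Section \ref{exPsi1}.
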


Let us introduce a few notations. Define the set
\begin{equation*}
\mathcal{J}=\lbrace (r,k) \mid r\leq R, 1\leq k\leq a_{r} \rbrace.
\end{equation*}
We consider the following order on $\mathcal{P}_{f}(\mathcal{J})$ (generalized from the order defined in \ref{deforder}).

\begin{defi}\label{deforder2}
Let $N\in \mathbb{N}$ and $J,K \in \mathcal{P}_{f}(\mathcal{J})$ such that $|K|=|J|=N$. 

We say that $J\preceq K$ if and only if they are ordered that way for the lexicographical order on $N$-tuples, while elements of $\mathcal{J}$ are also ordered lexicographically.
\end{defi}

\begin{proof}
Heuristically, the vector space $T_{\pmb\Psi}$  is constructed to be the infinite tensor product
\begin{equation*}
[\omega']\otimes \left(L(\tilde{Y}_{q^{2R-1}})\right)^{\otimes a_{R}}\otimes \left(L(\tilde{Y}_{q^{2R-3}})\right)^{\otimes a_{R-1}}\otimes \cdots \left(L(\tilde{Y}_{q^{-1}})\right)^{\otimes a_{0}}\otimes\cdots \left(L(\tilde{Y}_{q^{-2r-1}})\right)^{\otimes a_{-r}}\otimes \cdots
\end{equation*} 
Hence it is generated by the pure tensors:
\begin{multline}
v_{2R-1}^{\pm^{(1)}}\otimes v_{2R-1}^{\pm^{(2)}}\otimes \cdots v_{2R-1}^{\pm^{(a_{R})}}\otimes v_{2R-3}^{\pm^{(1)}}\otimes \cdots v_{2R-3}^{\pm^{(a_{R-1})}}\otimes \cdots v_{-1}^{\pm^{(1)}}\otimes v_{-1}^{\pm^{(a_{0})}}\otimes \cdots \\
\otimes v_{-2r-1}^{\pm^{(1)}}\otimes \cdots v_{-2r-1}^{\pm^{(a_{-r})}}\otimes\cdots,
\end{multline}
with a finite number of lowest weight components ($-$).

Formally, as in (\ref{Tpsi1}), let $T_{\pmb\Psi}$ be the vector space 
\begin{equation*}
T_{\pmb\Psi}:= \bigoplus_{J\in \mathcal{P}_{f}(\mathcal{J})}\mathbb{C} v_{J}.
\end{equation*}
As before, as the \textit{infinite pure tensors} have a finite number of lowest weight components, and thanks to the approximate coproduct formulas (\ref{apprcoprod1}) and (\ref{apprcoprod2}), the action of $\Uqbp$ on the finite tensor products $\tilde{S}_{N}=[\omega']\bigotimes_{r=-N}^{R}\left(L(\tilde{Y}_{q^{2r-1}})\right)^{\otimes a_{r}}$ stabilizes as $N \rightarrow + \infty$ and the limit can be taken as the action of $\Uqbp$ on $T_{\pmb\Psi}$.

With the decomposition in the proof of Lemma \ref{lemtrig}, one can see that the action of the $\ell$-Cartan subalgebra $\Uqbh$ on the $v_{J}$'s is upper triangular, for the order on $\mathcal{P}_{f}(\mathcal{J})$ defined in (\ref{deforder2}).

Then, one has, for all $J\in \mathcal{P}_{f}(\mathcal{J})$, $r\geq 1$,
\begin{equation*}
h_{r}.v_{J} \in \lambda_{r,J}v_{J} + \bigoplus_{K\prec J, K\subset\text{Conv}(J)} \mathbb{C}v_{K},
\end{equation*}
where the $(\lambda_{r,J})_{r\geq 1}$ satisfy
\begin{equation*}
\sum_{r\geq 1}\lambda_{r,J}z^{r} = \frac{1}{q-q^{-1}} \log(\Phi_{J}(z)),
\end{equation*}
with
\begin{equation*}
\Phi_{J}(z) = \sum_{m\geq 0} \phi_{m,J}z^{m}, \quad \text{and } (\phi_{m,J})_{m\geq 0} = \pmb\Psi\prod_{(r,k)\in J}A_{q^{2r}}^{-1}.
\end{equation*}
Hence the vector space $T_{\pmb\Psi}$ has a basis of $\ell$-weight vectors. Let us write:
\begin{equation*}
T_{\pmb\Psi} = \bigoplus_{J\in \mathcal{P}_{f}(J)} \mathbb{C}w_{J},
\end{equation*}
where, for all $J\in \mathcal{P}_{f}(J)$, $w_{J}$ is an $\ell$-weight vector of $\ell$-weight $\pmb\Psi\prod_{(r,k)\in J}A_{q^{2r}}^{-1}$ (different $w_{J}$ can contribute to the same $\ell$-weight space). 

Thus, $T_{\pmb\Psi}$ has finite dimensional $\ell$-weight spaces and its $q$-character is 
\begin{equation*}
\chi_{q}(T_{\pmb\Psi}) = \sum_{J\in \mathcal{P}_{f}(J)}\pmb\Psi\prod_{(r,k)\in J}A_{q^{2r}}^{-1} = [\pmb\Psi]\cdot\chi_{\pmb\Psi}^{\infty}.
\end{equation*}
\end{proof}

		\subsection{Construction of induced modules}
		
As stated in Remark \ref{remunique}, to obtain a $\Uqb$-module structure on the infinite tensor product, one needs to extend these modules. This is why our asymptotical standard modules will be obtained by induction.

Let $M$ be a $\Uqbp$-module. Define the $\Uqb$-module $M^c$ induced from $M$:
\begin{equation*}
M^c = \Uqb \otimes_{\Uqbp}M \cong \Uqbm\otimes_{\mathbb{C}} M.
\end{equation*}

This induction preserves the $q$-character of the module. More precisely, we have Proposition \ref{proptheo'}, which is obtained from Lemma \ref{lemlweight} bellow.

First of all, we use the following Lemma on the structure of $\Uqbm$.

\begin{lemme}\label{lemUq-}\citep{BPBW}
The elements $\left(x_{m_{1}}^{-}x_{m_{2}}^{-}x_{m_{3}}^{-}\cdots x_{m_{s}}^{-}\right)$, where $s\geq 0$ and $1\leq m_{1}\leq m_{2}\leq m_{3}\leq\cdots\leq m_{s} $, form a basis of $\Uqbm$.
\end{lemme}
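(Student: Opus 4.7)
The plan is to prove this PBW-type basis statement in two steps: (i) the proposed ordered monomials span $\Uqbm$, and (ii) they are linearly independent.

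For spanning, I would use a straightening argument from the defining quadratic relations of Remark \ref{remUqsl2m}. Specialised to $\hat{\mathfrak{sl}_{2}}$, relation (\ref{eqUqbmg}) reads
\begin{equation*}
x_{m+1}^{-} x_{l}^{-} - q^{-2} x_{l}^{-} x_{m+1}^{-} = q^{-2} x_{m}^{-} x_{l+1}^{-} - x_{l+1}^{-} x_{m}^{-},
\end{equation*}
valid for $m,l \geq 1$. Taking $m = M-1$, $l = L$ with $M > L \geq 1$ (so that $M \geq 2$) one solves for the ``out of order'' product:
\begin{equation*}
x_{M}^{-} x_{L}^{-} = q^{-2} x_{L}^{-} x_{M}^{-} + q^{-2} x_{M-1}^{-} x_{L+1}^{-} - x_{L+1}^{-} x_{M-1}^{-}.
\end{equation*}
The first term on the right is now ordered. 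A brief case analysis on $M-L$ shows that among the remaining two terms at most one is out of order, and its index gap $M-L-2$ is strictly smaller than $M-L$; the gap-one case $M=L+1$ collapses directly to $x_{L+1}^{-}x_{L}^{-}=q^{-2}x_{L}^{-}x_{L+1}^{-}$. A double induction, first on the word length $s$ and then on the maximal gap of a disordered adjacent pair (with the total index sum preserved at each rewrite), shows that every element of $\Uqbm$ lies in the linear span of the ordered monomials.

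For linear independence, I would invoke Beck's PBW theorem \citep{BPBW}. That result supplies a PBW basis of $\mathcal{U}_{q}^{-}(\hat{\mathfrak{sl}_{2}})$ compatible with the triangular decomposition (\ref{trigdecompUqg}), and its restriction to the subalgebra generated by the Drinfeld generators $(x_{m}^{-})_{m \geq 1}$, which equals $\Uqbm$, is exactly the family of ordered monomials in the statement. An alternative route is to compute the graded dimension (in the $Q$-grading of (\ref{Qgrad})) of the abstract algebra presented by the relations of Remark \ref{remUqsl2m}, bound it above by the number of weakly increasing tuples of the appropriate total degree using the spanning step, and match it to the graded dimension of $\Uqbm$ given by Beck's theorem.

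The main obstacle is step (ii): because the quadratic relations genuinely intertwine several monomials, a purely combinatorial proof of non-vanishing for an arbitrary linear combination of ordered monomials is technically delicate. The cleanest path is to import linear independence from the general PBW theory; a self-contained alternative would be to exhibit a faithful $\Uqbm$-module (for instance one of the induced modules $T^{c}$ constructed later in the paper, or a Verma-type module as in \citep[Section 3.1]{ARDRF}) on which the ordered monomials send a highest $\ell$-weight vector to linearly independent vectors, but this requires work comparable to what is done in Beck's proof.
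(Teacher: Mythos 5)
Your proposal is sound, and it matches the paper in the essential respect: the paper does not prove this lemma at all, it simply cites Beck's PBW theorem \citep{BPBW}, so deferring linear independence to that reference is exactly what the author does. Your added straightening argument for spanning is correct --- solving (\ref{eqUqbmg}) for $x_{M}^{-}x_{L}^{-}$ with $M>L$ does produce one ordered term plus terms whose disorder gap drops from $M-L$ to $M-L-2$, with length and total index sum preserved, so the rewriting terminates --- though it is strictly more than the paper supplies. The one point of genuine divergence is that the paper's suggested self-contained route is different from yours: Remark \ref{remOre} observes that the subalgebras $\left\langle x_{m}^{-}\right\rangle_{1\leq m\leq N}$ are iterated Ore extensions (following the methods of \citep{QGK}), which yields the ordered monomials as a basis of each finite stage --- spanning and linear independence in one stroke --- and then one passes to the union. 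That approach buys independence without either importing Beck's full PBW machinery or constructing a faithful module, whereas your two-step outline isolates the combinatorial spanning argument explicitly but leaves the delicate half resting on the external citation, as you candidly note.
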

\begin{rem}\label{remOre}
This result can also be obtained by seeing the $\left\langle x_m^-\right\rangle_{1\leq m \leq N} \subset\Uqbm$ as successive Ore extensions. Then, with the methods used in \citep{QGK}, we see that the elements $ (x_1^-)^{i_1}(x_2^-)^{i_2}\cdots (x_N^-)^{i_N}$ form a basis of $\left\langle x_m^-\right\rangle_{1\leq m \leq N}$.
\end{rem}
Let us recall relation (\ref{eqUqbmg}) and relation (\ref{relhx}) in this context:
\begin{equation}\label{eqUqbm}
x_{m+1}^- x_l^- -q^{-2}x_l^- x_{m+1}^- = q^{-2}x_m^- x_{l+1}^- - x_{l+1}^- x_m^-, \text{ for all } m,l\geq 1,
\end{equation}
\begin{equation}\label{eqhrxmm}
[h_r,x_m^-] = -\frac{[2r]_q}{r}x_{m+r}^-, \text{ for all } r,m\geq 1.
\end{equation}
Note that the algebra $\Uqbm$ has a natural $\mathbb{N}$-graduation, which is different from the graduation coming from the $Q$-graduation on $\Uqg$ (\ref{Qgrad}). We note:
\begin{equation}\label{Zgrad}
\deg(x_{m_{1}}^{-}x_{m_{2}}^{-}x_{m_{3}}^{-}\cdots x_{m_{s}}^{-}) = \sum_{i=1}^s m_i.
\end{equation}
Thanks to the relation (\ref{eqUqbm}), this is a well-defined graduation on $\Uqbm$.

 \begin{lemme}\label{lemlweight}
 Let $v\in M^c$ be an eigenvector of $h_r$ for a certain $r\geq 1$. Then $v$ belongs to the subspace $1\otimes M$.
 \end{lemme}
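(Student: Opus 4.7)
My plan is to exploit the $\mathbb{N}$-grading on $\Uqbm$ given by (\ref{Zgrad}), writing its $d$-th homogeneous piece as $\Uqbm^{(d)}$. Through the vector-space identification $M^c \cong \Uqbm \otimes M$, every element decomposes as a finite sum $v = \sum_{d\geq 0} v_d$ with $v_d \in \Uqbm^{(d)} \otimes M$, and I aim to prove $v_d = 0$ for every $d \geq 1$. On a pure tensor $u \otimes m$ with $u \in \Uqbm$, writing $h_r u = u h_r + [h_r, u]$ inside $\Uqb$ gives
\begin{equation*}
h_r \cdot (u \otimes m) = u \otimes (h_r \cdot m) + [h_r, u] \otimes m,
\end{equation*}
and by (\ref{eqhrxmm}) the commutator $[h_r, \cdot]$ is a derivation of $\Uqbm$ strictly raising the grading by $r$. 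Hence $h_r \cdot v - \lambda v = 0$ decomposes into homogeneous components of various degrees.

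Let $D = \max\{d : v_d \neq 0\}$ and write $v_D = \sum_i u_{D,i} \otimes m_{D,i}$ using the PBW basis of $\Uqbm^{(D)}$ from Lemma \ref{lemUq-}, with not all $m_{D,i}$ zero. Since $v_{D+r} = 0$, the degree-$(D+r)$ part of the eigenvector equation becomes
\begin{equation*}
\sum_i [h_r, u_{D,i}] \otimes m_{D,i} = 0 \quad \text{in } \Uqbm^{(D+r)} \otimes M.
\end{equation*}
The lemma therefore reduces to the following key injectivity claim: for every $D \geq 1$, the $\mathbb{C}$-linear map $[h_r, \cdot] \colon \Uqbm^{(D)} \to \Uqbm^{(D+r)}$ is injective. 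Granting this, the images $\{[h_r, u_{D,i}]\}_i$ are linearly independent in $\Uqbm^{(D+r)}$, and linear independence propagates through the tensor product to force every $m_{D,i} = 0$, contradicting $v_D \neq 0$.

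For the injectivity, I would further refine by the number of factors $s$ in a PBW monomial, which is preserved by $[h_r, \cdot]$, reducing to injectivity on each finite-dimensional component $\Uqbm^{(D,s)}$. On PBW monomials $\mu = x_{m_1}^- \cdots x_{m_s}^-$ with $m_1 \leq \cdots \leq m_s$, I would use the reverse-lexicographic order on the tuples $(m_s, m_{s-1}, \ldots, m_1)$. The Leibniz rule yields
\begin{equation*}
[h_r, \mu] = -\frac{[2r]_q}{r} \sum_{j=1}^s x_{m_1}^- \cdots x_{m_j + r}^- \cdots x_{m_s}^-,
\end{equation*}
and, after PBW re-expansion via the exchange relation (\ref{eqUqbm}), the strategy is to show that the unique maximal PBW monomial appearing in this expansion is $\nu_\mu := x_{m_1}^- \cdots x_{m_{s-1}}^- x_{m_s + r}^-$, with a nonzero coefficient proportional to $[k]_q$, where $k$ is the multiplicity of $m_s$ in $\mu$. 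Since $\mu \mapsto \nu_\mu$ is then strictly order-preserving, the corresponding square submatrix of $[h_r, \cdot]$ is upper-triangular with nonzero diagonal, giving the desired injectivity.

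The main technical obstacle lies in the PBW re-expansion underlying the leading-term analysis: applying (\ref{eqUqbm}) to an out-of-order pair $x_a^- x_b^-$ with $a > b$ produces, beyond the principal swapped term $q^{-2} x_b^- x_a^-$, two extra terms indexed by the closer pair $(a-1, b+1)$ whose maximum $a-1$ is strictly below $a$. One must iterate this exchange carefully and confirm by induction that every non-leading PBW contribution has maximal index strictly less than $m_s + r$, while the $k$ contributions to $\nu_\mu$ (coming from positions $j = s-k+1, \ldots, s$) combine into a geometric sum $q^{-(k-1)}[k]_q$; combined with the genericity of $q$ (ensuring $[k]_q \neq 0$ and $[2r]_q \neq 0$), this secures the upper-triangularity and completes the proof.
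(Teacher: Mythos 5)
Your argument is correct and follows essentially the same route as the paper: both work in the PBW basis of Lemma \ref{lemUq-}, exploit the $Q$- and $\mathbb{N}$-gradings together with (\ref{eqhrxmm}) and the exchange relation (\ref{eqUqbm}), and extract the leading PBW monomial $x_{m_1}^-\cdots x_{m_{s-1}}^-x_{m_s+r}^-$, whose coefficient $1+q^{-2}+\cdots+q^{-2(k-1)}$ is nonzero because $q$ is not a root of unity. The only difference is bookkeeping: you take the maximal total degree $D$ and package the leading-term analysis as injectivity of $[h_r,\cdot]\colon \Uqbm^{(D)}\to\Uqbm^{(D+r)}$, whereas the paper fixes the bidegree and extracts the terms ending in $x_{\overline{m_s}+r}^-$ for the maximal last index $\overline{m_s}$; your top-degree choice makes the vanishing of the non-commutator contribution immediate, but the proof is the same.
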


\begin{proof}
Let us write, with the result of Lemma \ref{lemUq-}:
\[ v= \sum_{s\geq 0}\sum_{1\leq m_{1}\leq m_{2}\leq\cdots\leq m_{s}}x_{m_{1}}^{-}x_{m_{2}}^{-}\cdots x_{m_{s}}^{-}\otimes u_{m_{1},m_{2},\ldots,m_{s}},
\]
 where all but finitely many of the $u_{m_{1},m_{2},\ldots,m_{s}}\in M$ are 0. By (\ref{eqhrxmm}), one has, for all $s\geq 1$, and all $1\leq m_{1}\leq m_{2}\leq\cdots\leq m_{s}$:
 \[ h_r \left( x_{m_{1}}^{-}x_{m_{2}}^{-}\cdots x_{m_{s}}^{-} \right) = x_{m_{1}}^{-}x_{m_{2}}^{-}\cdots x_{m_{s}}^{-} h_{r} -\frac{[2r]_q}{r}\sum_{j=1}^s x_{m_{1}}^{-}\cdots x_{m_j+r}^-\cdots x_{m_{s}}^{-}.
 \]
 Thus, from the definition of the induced module,
 \begin{multline*}
  h_r\cdot v  = 1\otimes h_r u_{1,\ldots,1} + \sum_{s\geq 1} \left( \sum_{1\leq m_{1}\leq m_{2}\leq\cdots\leq m_{s}} \left( x_{m_{1}}^{-}x_{m_{2}}^{-}\cdots x_{m_{s}}^{-}\otimes h_{r}u_{m_{1},m_{2},\ldots,m_{s}} \right.\right.\\
  \left.\left. -\frac{[2r]_q}{r}\sum_{j=1}^s x_{m_{1}}^{-}\cdots x_{m_j+r}^-\cdots x_{m_{s}}^{-}\otimes u_{m_{1},m_{2},\ldots,m_{s}}\right)\right).
 \end{multline*}
By hypothesis, there exists $\lambda\in\mathbb{C}$ such that $h_r \cdot v = \lambda v$. Thus, from the $Q$-graduation (\ref{Qgrad}) on $\Uqbm$, one has $1\otimes h_r u_{1,\ldots,1}  = \lambda 1\otimes u_{1,\ldots,1}$, and, for all $s\geq 1$,
\begin{multline*}
\sum_{1\leq m_{1}\leq m_{2}\leq\cdots\leq m_{s}} \left( x_{m_{1}}^{-}x_{m_{2}}^{-}\cdots x_{m_{s}}^{-}\otimes h_{r}u_{m_{1},m_{2},\ldots,m_{s}} \right.\\
  \left. -\frac{[2r]_q}{r}\sum_{j=1}^s x_{m_{1}}^{-}\cdots x_{m_j+r}^-\cdots x_{m_{s}}^{-}\otimes u_{m_{1},m_{2},\ldots,m_{s}}\right)\\
   = \lambda \sum_{1\leq m_{1}\leq m_{2}\leq\cdots\leq m_{s}}x_{m_{1}}^{-}x_{m_{2}}^{-}\cdots x_{m_{s}}^{-}\otimes u_{m_{1},m_{2},\ldots,m_{s}}.
\end{multline*}
Moreover, from the $\mathbb{N}$-graduation (\ref{Zgrad}) on $\Uqbm$, one has, for all $s\geq 1$, for all $N\geq 1$,
\begin{multline}\label{grosseeqxmm}
\sum_{\substack{
1\leq m_{1}\leq m_{2}\leq\cdots\leq m_{s} \\
\sum m_i =N +r
}} x_{m_{1}}^{-}x_{m_{2}}^{-}\cdots x_{m_{s}}^{-}\otimes (h_r - \lambda)u_{m_{1},m_{2},\ldots,m_{s}} \\
-\frac{[2r]_q}{r} \sum_{\substack{
1\leq m_{1}\leq m_{2}\leq\cdots\leq m_{s} \\
\sum m_i =N 
}}\sum_{j=1}^s x_{m_{1}}^{-}\cdots x_{m_j+r}^-\cdots x_{m_{s}}^{-}\otimes u_{m_{1},m_{2},\ldots,m_{s}} = 0.
\end{multline}
Let us note that the $x_{m_{1}}^{-}\cdots x_{m_j+r}^-\cdots x_{m_{s}}^{-}$ are not necessarily in the basis of $\Uqbm$ of Lemma \ref{lemUq-}, but from (\ref{eqUqbm}) they can be written as a linear combination of these basis elements. Moreover, with Remark \ref{remOre}, $x_{m_{1}}^{-}\cdots x_{m_j+r}^-\cdots x_{m_{s}}^{-} \in \left\langle x_m^- \right\rangle_{1\leq m\leq \overline{m}} \subset\Uqbm$, where $\overline{m}=\max(m_j+r,m_s)$, and can be decomposed in the basis of this subalgebra (note that $m_j+r$ is possibly greater than $m_s$).

Now, suppose that $v$ does not belong to the subspace $1\otimes M$. Then, for all $s\geq 1$ and all $N\geq 1$ such that there exists $(m_{1},m_{2},\ldots,m_{s})$ with $\sum m_i=N$ and $u_{m_{1},m_{2},\ldots,m_{s}} \neq 0$, consider
\begin{equation}\label{defms}
\overline{m_s}= \max\left\lbrace m_s\in \mathbb{N} \mid \exists s\geq 1,\exists (m_{1},m_{2},\ldots,m_{s-1}), \sum m_i =N,u_{m_{1},\ldots,m_{s-1},\overline{m_{s}}}\neq 0 \right\rbrace .
\end{equation} 

Then, we decompose the left-hand term of relation (\ref{grosseeqxmm}) into sums of pure tensors whose first factor is in the basis of Lemma \ref{lemUq-}. Then we extract from this sum the terms whose first factor is of the form $x_{m_1}^- \cdots x_{m_{s-1}}^- x_{\overline{m_s}+r}^-$. We get a sum equal to 0, from relation (\ref{grosseeqxmm}). Let us explicit the terms we obtain. As $r>0$, the first term of the left-hand-side of (\ref{grosseeqxmm}) does not contribute. The remaining terms have $-\frac{[2r]_q}{r}$ as a scalar factor. The terms obtained from $j=s$ are of the form 
\begin{equation*}
\sum_{\substack{
1\leq m_{1}\leq m_{2}\leq\cdots\leq m_{s-1}\leq \overline{m_s} \\
\sum m_i =N}} x_{m_1}^-\cdots x_{m_{s-1}}^- x_{\overline{m_s}+r}^-\otimes u_{m_1,\ldots,m_{s-1},\overline{m_s}},
\end{equation*}
where the first factors of each terms are elements of the considered basis, so no permutation of the $x_m^-$ is needed. 

By rewriting (\ref{eqUqbm}),
\begin{equation*}
x_{\overline{m_s}+r}^- x_{\overline{m_s}}^-=q^{-2}x_{\overline{m_s}}^- x_{\overline{m_s}+r}^- + \underbrace{q^{-2}x_{\overline{m_s}+r-1}^- x_{\overline{m_s}+1}^- - x_{\overline{m_s}+1}^- x_{\overline{m_s}+r-1}^-}_{\text{does not contribute}},
\end{equation*}
thus, the terms obtained from $j=s-1$ are of the form
\begin{equation*}
\sum_{\substack{
1\leq m_{1}\leq\cdots\leq m_{s-2}\leq m_{s-1}=\overline{m_s} \\
\sum m_i =N}} q^{-2}x_{m_1}^-\cdots x_{m_{s-2}}^- x_{\overline{m_s}}^- x_{\overline{m_s}+r}^-\otimes u_{m_1,\ldots,m_{s-2},\overline{m_s},\overline{m_s}},
\end{equation*}
as only the terms for which $m_{s-1}=\overline{m_s}$ contribute to the considered sum.

Recursively, the terms obtained from j=1 are of the form
\begin{equation*}
q^{-2(s-1)}\left(x_{\overline{m_s}}^-\right)^{s-1}x_{\overline{m_s}+r}^- \otimes u_{\overline{m_s},\overline{m_s},\ldots,\overline{m_s}},
\end{equation*}
if $N=s\overline{m_s}$, and 0 otherwise.
Thus, the resulting formula is:
\begin{equation}
0 = \sum_{\substack{
1\leq m_{1}\leq\cdots\leq m_{s-1}\leq \overline{m_s} \\
\sum m_i =N}} C_{m_1,\ldots,m_{s-1}} x_{m_1}^-\cdots x_{m_{s-1}}^- x_{\overline{m_s}+r}^-\otimes u_{m_1,\ldots,m_{s-1},\overline{m_s}},
\end{equation}
where
\begin{equation}
C_{m_1,\ldots,m_{s-1}} = \sum_{k=0}^{\mathcal{N}} q^{-2k} \text{ , with } \mathcal{N} = \sharp\{ m_i\mid m_i=\overline{m_s} \}.
\end{equation}
As $q$ is not a root of unity, these $C_{m_1,\ldots,m_{s-1}}$ are non-zero. This implies that all the $u_{m_1,\ldots,m_{s-1},\overline{m_s}}$ are 0, which is a contradiction with the hypothesis taken in the definition of $\overline{m_s}$ in (\ref{defms}).
\end{proof}

\begin{rem}
Recall the definition of $\ell$-weight vectors from Section \ref{HlWM}: $v$ in the $\Uqb$-module $V$ is an $\ell$-weight vector if there is an $\ell$-weight $\pmb\Psi$ and $p\in \mathbb{N}$ such that:
\begin{equation*}
\forall i \in I, \forall m\geq 0, (\phi_{i,m}^{+} - \psi_{i,m})^{p}v = 0 .
\end{equation*}
\end{rem}

\begin{prop}\label{proptheo'}
The $\ell$-weights vectors of $M^c$ are exactly the $1\otimes u$, where $u$ is an $\ell$-weight vector of $M$.
\end{prop}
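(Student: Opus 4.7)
The plan is to prove both inclusions. For the forward direction, suppose $u \in M$ is an $\ell$-weight vector of $\ell$-weight $\pmb\Psi$, so $(\phi_{m}^{+} - \psi_{m})^{p} u = 0$ for some $p$ and all $m$. Because $\phi_{m}^{+} \in \Uqbp$ and the tensor product is balanced over $\Uqbp$, one has $\phi_{m}^{+} \cdot (1 \otimes u) = 1 \otimes \phi_{m}^{+} u$, whence $(\phi_{m}^{+} - \psi_{m})^{p}(1 \otimes u) = 1 \otimes (\phi_{m}^{+} - \psi_{m})^{p} u = 0$. Thus $1 \otimes u$ is an $\ell$-weight vector of $M^{c}$ of $\ell$-weight $\pmb\Psi$.

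For the converse, let $v \in M^{c}$ be an $\ell$-weight vector. The generating-series identity $\phi^{+}(z) = k_{1} \exp\!\bigl((q-q^{-1}) \sum_{r>0} h_{r} z^{r}\bigr)$ expresses the commuting families $(\phi^{+}_{m})_{m \geq 0}$ and $(h_{r})_{r \geq 1}$ as polynomials in one another (modulo $k_{1}^{\pm}$), so $v$ is automatically a generalized eigenvector of every $h_{r}$. Fixing $r = 1$, choose $\mu \in \mathbb{C}$ and $p \geq 1$ with $(h_{1} - \mu)^{p} v = 0$ and induct on $p$; the base case $p = 1$ is exactly Lemma \ref{lemlweight}.

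For the inductive step, set $w := (h_{1} - \mu) v$; then $(h_{1} - \mu)^{p-1} w = 0$, so by the inductive hypothesis $w \in 1 \otimes M$. The PBW decomposition of Lemma \ref{lemUq-} identifies $M^{c} \cong \Uqbm \otimes_{\mathbb{C}} M$, which the $Q$-grading on $\Uqbm$ splits as $M^{c} = (1 \otimes M) \oplus N_{+}$ with $N_{+} := \bigoplus_{s \geq 1} \Uqbm_{-s\alpha_{1}} \otimes M$. Since $h_{1}$ has $Q$-degree $0$ and, by (\ref{eqhrxmm}) and the Leibniz rule, $[h_{1}, x^{-}_{m_{1}} \cdots x^{-}_{m_{s}}]$ remains a sum of products of exactly $s$ generators $x^{-}_{\cdot}$, both $1 \otimes M$ and $N_{+}$ are $h_{1}$-stable. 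Writing $v = v_{0} + v_{+}$ accordingly, the $N_{+}$-component of $w$ equals $(h_{1} - \mu) v_{+}$; this vanishes because $w \in 1 \otimes M$, so $v_{+}$ is a genuine $h_{1}$-eigenvector. Lemma \ref{lemlweight} applied to $v_{+}$ places it in $(1 \otimes M) \cap N_{+} = \{0\}$, giving $v = v_{0} \in 1 \otimes M$. Writing $v = 1 \otimes u$, the computation from the first paragraph shows $u$ is an $\ell$-weight vector of $M$ with the same $\ell$-weight as $v$.

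The only real technical point is the reduction from the generalized-eigenvector hypothesis to the eigenvector hypothesis that Lemma \ref{lemlweight} was designed to handle; this passes through the induction on $p$ together with the observation that the $Q$-grading cleanly separates the $1 \otimes M$ part of $M^{c}$ from its complement in an $h_{r}$-equivariant way, so that Lemma \ref{lemlweight} can do the actual work on $v_{+}$.
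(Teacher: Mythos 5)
Your proof is correct and takes essentially the same route as the paper: both handle the forward direction by balancedness of the tensor product, and both handle the converse by splitting $M^c = (1\otimes M)\oplus M_{\geq 1}$ into $h_{1}$-stable summands and reducing the generalized-eigenvector situation to the honest-eigenvector case covered by Lemma \ref{lemlweight}. Your induction on the nilpotency order $p$ is just a reorganization of the paper's minimal-counterexample argument (the paper applies $(\phi_1^+-\psi_1)^{p_0-1}$ to a putative counterexample to produce an eigenvector lying in both summands), so the two proofs are substantively identical.
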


\begin{proof}
First of all, if $u$ is an $\ell$-weight vector of $M$, then by construction of $M^c$, $1\otimes u$ is an $\ell$-weight vector of $M^c$.

Conversely, let us write:
\begin{equation}\label{decompMc}
M^c = \left(1\otimes M\right) \oplus M_{\geq 1},
\end{equation}
where $M_{\geq 1}$ is generated by the pure tensors for which the first factor is of non-zero $Q$-degree. Notice that from (\ref{eqhrxmm}), $M_{\geq 1}$ is a sub-$\Uqbh$-module.

For all $\ell$-weight vector $v$ of $M^c$, there exists:
\begin{equation*}
p_{v} = \min \left\lbrace p\in \mathbb{N}^+ \mid \exists \psi_1 \in \mathbb{C}, (\phi_1^+ - \psi_1)^p v =0 \right\rbrace.
\end{equation*}

Now suppose there exists non-zero $\ell$-weight vectors in $M^c$ which is not in $1\otimes M$. Then its projection on $M_{\geq 1}$ in the decomposition (\ref{decompMc}) is a non-zero $\ell$-weight vector in $M_{\geq 1}$. Consider such a $v_0$ which minimizes $p_v$. Let $p_0=p_{v_0}$. There is $\psi_1\in \mathbb{C}$ such that $(\phi_1^+ - \psi_1)^{p_0} v_0 =0$. By definition,
\begin{equation*}
w = (\phi_1^+ - \psi_1)^{p_0-1} v_0 \neq 0.
\end{equation*}
But, $\phi_1^+ w = \psi_1 w$, as $h_1=k_1^{-1}\phi_1^+$, and $v_0$ and $w$ are weight vectors, then $w$ is an eigenvector of $h_1$.   By Lemma \ref{lemlweight}, $w \in 1\otimes M$. However, $w \in M_{\geq 1}$ as $M_{\geq 1}$ is stabilized by $\Uqbh$. Hence $w=0$, which is a contradiction with the definition of $v_0$.
\end{proof}

Now everything is in place to define the asymptotical standard modules. Let $\pmb\Psi$ be a negative $\ell$-weight such that $L(\pmb\Psi)$ is in $\OmZ$. Define the induced $\Uqb$-module from the $\Uqbp$-module $T_{\pmb\Psi}$ constructed in the previous section:
\begin{equation*}
T_{\pmb\Psi}^{c} := \Uqb \otimes_{\Uqbp} T_{\pmb\Psi}.
\end{equation*}

\begin{theoreme}\label{theoPsi1'}
The vector space $T_{\pmb\Psi}^{c}$ is a $\Uqb$-module, such that its $\ell$-weight spaces are finite-dimensional. As such, $T_{\pmb\Psi}^{c}$ satisfies the Conjecture \ref{conjqchar} for $\pmb\Psi$.  One has:
\begin{equation*}
\chi_{q}(T_{\pmb\Psi}^c)=[\pmb\Psi]\cdot\chi_{\pmb\Psi}^{\infty}.
\end{equation*}
\end{theoreme}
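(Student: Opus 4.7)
The plan is to derive Theorem \ref{theoPsi1'} essentially as a corollary of Proposition \ref{proptheo'} applied to $M=T_{\pmb\Psi}$. First, I would observe that $T_{\pmb\Psi}^c$ is a $\Uqb$-module by the universal property of induction, and that via the triangular decomposition $\Uqb\simeq \Uqbm\otimes \Uqb^{0}\otimes \Uqb^{+}$ we have an isomorphism of $\mathbb{C}$-vector spaces
\begin{equation*}
T_{\pmb\Psi}^c \cong \Uqbm\otimes_{\mathbb{C}} T_{\pmb\Psi},
\end{equation*}
under which the map $\iota \colon u\mapsto 1\otimes u$ realizes $T_{\pmb\Psi}$ as a sub-$\Uqbp$-module of $T_{\pmb\Psi}^c$.

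Next, I would invoke Proposition \ref{proptheo'}: every $\ell$-weight vector of $T_{\pmb\Psi}^c$ is of the form $1\otimes u$ where $u\in T_{\pmb\Psi}$ is itself an $\ell$-weight vector, and the $\ell$-weight is preserved by $\iota$. Combined with the fact established in Theorem \ref{theoPsi1} that $T_{\pmb\Psi}$ is the direct sum of its (finite-dimensional) $\ell$-weight spaces, this yields an identification $(T_{\pmb\Psi}^c)_{\pmb\Phi}=\iota((T_{\pmb\Psi})_{\pmb\Phi})$ for every $\pmb\Phi\in P_{\ell}$. Hence each $\ell$-weight space of $T_{\pmb\Psi}^c$ is finite-dimensional, and the $q$-characters coincide:
\begin{equation*}
\chi_q(T_{\pmb\Psi}^c)=\chi_q(T_{\pmb\Psi})=[\pmb\Psi]\cdot\chi_{\pmb\Psi}^{\infty}.
\end{equation*}

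To verify Conjecture \ref{conjqchar}, it remains to check that the sum of the $\ell$-weight spaces of $T_{\pmb\Psi}^c$ is a sub-$\Uqbp$-module containing, for each $N\geq 0$, a sub-$\Uqb^{+}$-module isomorphic to $S_N$. By the previous paragraph this sum is exactly $\iota(T_{\pmb\Psi})=1\otimes T_{\pmb\Psi}$, which is stable under $\Uqbp$ by construction of the induced module. Moreover, the proof of Theorem \ref{theoPsi1} shows that the action of $\Uqb^{+}$ on the subspace spanned by the truncated pure tensors $v_J^{(N)}$ coincides with the action inherited from the finite-dimensional standard module $S_N$, so $1\otimes T_{\pmb\Psi}$ does contain each $S_N$ as a sub-$\Uqb^{+}$-module.

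The conceptual difficulty has in fact been absorbed into Lemma \ref{lemlweight} and Proposition \ref{proptheo'}: the nontrivial point is that induction does not produce additional $\ell$-weight vectors living outside $1\otimes T_{\pmb\Psi}$, which would otherwise be forced on us by the infinite-dimensionality of $\Uqbm$ and would destroy the finite-dimensionality of the $\ell$-weight spaces of the induced module. With that result in hand, the proof of Theorem \ref{theoPsi1'} reduces to the routine bookkeeping described above.
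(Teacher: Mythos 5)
Your proposal is correct and follows essentially the same route as the paper: both deduce the theorem from Proposition \ref{proptheo'} (no new $\ell$-weight vectors appear outside $1\otimes T_{\pmb\Psi}$) together with Theorem \ref{theoPsi1}, and then identify the sum of $\ell$-weight spaces with $T_{\pmb\Psi}$ to check the remaining conditions of Conjecture \ref{conjqchar}. Your write-up is if anything slightly more explicit than the paper's about why each $\ell$-weight space of $T_{\pmb\Psi}^c$ coincides with the corresponding one of $T_{\pmb\Psi}$.
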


\begin{proof}
From Proposition \ref{proptheo'} we know that the $\ell$-weight vectors of $T_{\pmb\Psi}^c$ are exactly the $1\otimes u$, where $u$ is an $\ell$-weight vector of $T_{\pmb\Psi}$. Thus, with the result of Theorem \ref{theoPsi1}, we know that $T_{\pmb\Psi}^c$ has finite-dimensional $\ell$-weight spaces which satisfy:
\begin{equation*}
\chi_{q}(T_{\pmb\Psi}^c)= \chi_{q}(T_{\pmb\Psi})=[\pmb\Psi]\cdot\chi_{\pmb\Psi}^{\infty}.
\end{equation*}
Furthermore, let us look at the way $T_{\pmb\Psi}^c$ is built to see that it satisfies Conjecture \ref{conjqchar}. Recall the standard modules defined in Section \ref{psi1infty} and used in Section \ref{GenConst}. For all $N\geq 1$, let
	\begin{equation*}
	S_{N}= \overrightarrow{\bigotimes_{r=-N}^{R}}\left( V_{q^{-2r-1}}^{\otimes a_{r}}\right).
	\end{equation*}
Then the sum of $\ell$-weights spaces of $T_{\pmb\Psi}^{c}$ is the $\Uqbp$-module $T_{\pmb\Psi}$, which contains for all $N\geq 1$ a sub-$\Uqb^{+}$-modules isomorphic to $S_{N}$.
\end{proof}

\begin{rem}
The space $T_{\pmb\Psi}^{c}$ contains submodules without $\ell$-weight vectors. For example, for $\pmb\Psi=\pmb\Psi_{1}^{-1}$, the submodule $M=<x_{r}^{-}\otimes w_{\emptyset} - 1\otimes w_{\{0\}}\mid r\in \mathbb{N}^{*}>$ does not contain any $\ell$-weight vectors.
\end{rem}

From now on, for all $\ell$-weights $\pmb\Psi$ such that $L(\pmb\Psi)$ is in the category $\OmZ$, 
\begin{equation*}
M(\pmb\Psi) : = T_{\pmb\Psi}^{c},
\end{equation*}
will denote the generalized standard module associated to the $\ell$-weight $\pmb\Psi$.

	\section{Decomposition of the $q$-character of asymptotical standard modules}\label{sectdecomp}
	
As stated in the introduction, for the category $\mathscr{C}$ of finite-dimensional representations of $\Uqb$, it is known (\citep{QVFDR}) that the classes of the standard modules $[M(m)]$ form a second basis of the Grothendieck ring $K(\mathscr{C})$ (in addition to the classes of simple modules). Moreover, the two bases are triangular with respect to Nakajima's partial ordering of dominant monomials (see \citep{QVtA})):
\begin{equation}\label{KLdf}
[M(m)] = [L(m)] + \sum_{m'< m} P_{m',m}[L(m')],
\end{equation} 
where the coefficients $P_{m',m} \in \mathbb{Z}$ are non-negative.

The aim of this Section is to show that the $q$-characters of the modules we have just built have similar decomposition into sum of $q$-characters of simple modules.

		\subsection{Partial order on $\Plr$}\label{partordr}
		
For a formula of the type (\ref{KLdf}) to make sense, one needs to define a partial order on $\Plr$, which is the index set of both the simple modules and the standard modules in our context. 

We draw our inspiration from the partial order defined in the proof of \citep[Lemma 6.4]{CABS}, which is itself a generalization of the order Nakajima used in \citep{QVtA}, recalled in (\ref{Nakajimaorder}).

\begin{defi}
Let $\pmb\Psi,\pmb\Psi' \in \Plr$. we say that $\pmb\Psi'\leq \pmb\Psi$ if $\pmb\Psi'(\pmb\Psi)^{-1}$ is a monomial in the $A_{i,a}^{-1}$, with $i\in I, a\in \mathbb{C}^{\times}$.
\end{defi}

\begin{rem}
Contrary to the finite-dimensional case, every $\ell$-weight has an infinite number of \textit{lower} $\ell$-weights.
\end{rem}

		\subsection{Decomposition for $M(\pmb\Psi_{1}^{-1})$}

Let us recall the $q$-character of the $\Uqb$-module $M(\pmb\Psi_{1}^{-1})$:
\begin{equation*}
\chi_{q}(M(\pmb\Psi_{1}^{-1})) = \sum_{J\in \mathcal{P}_{f}(\mathbb{N})}[\pmb\Psi_{1}^{-1}]\prod_{j\in J}A_{q^{-2j}}^{-1}.
\end{equation*}

\begin{theoreme}\label{theodecomp}
The $q$-character of $M(\pmb\Psi_{1}^{-1})$ has a decomposition into a sum of $q$-characters of simple modules.  More precisely, 
\begin{equation*}
\chi_{q}(M(\pmb\Psi_{1}^{-1})) = \sum_{m=0}^{+\infty} \sum_{\stackrel{1\leq r_{1}<r_{2}<\cdots < r_{m}}{r_{i+1}>r_{i}+1}} \chi_{q}(L(\pmb\Psi_{1}^{-1}A_{q^{-2r_{1}}}^{-1}A_{q^{-2r_{2}}}^{-1}\cdots A_{q^{-2r_{m}}}^{-1})).
\end{equation*}
\end{theoreme}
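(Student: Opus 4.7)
The plan is to rewrite both sides as formal sums of monomials in the variables $x_j := A_{q^{-2j}}^{-1}$ ($j\in\mathbb{N}$), prefaced by $[\pmb\Psi_1^{-1}]$, and to match them via a combinatorial bijection on the index set $\PfN$. The LHS is already in this form,
\[
\chi_q(M(\pmb\Psi_1^{-1})) = [\pmb\Psi_1^{-1}]\sum_{J\in\PfN}\prod_{j\in J}x_j.
\]
I will partition $\PfN$ via a combinatorial invariant $\sigma$, identify each block with the monomials in the $q$-character of one simple module on the RHS, and then sum.

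For $J\in\PfN$, set $\sigma(J) := \{j\in J\mid j\geq 1,\ j-1\notin J\}$, the set of positive run-starts of $J$. Writing $\sigma(J) = \{r_1<\cdots<r_m\}$, the minimum is $\geq 1$ and, since a gap of length $\geq 1$ separates each maximal run of $J$ from the next, $r_{i+1}>r_i+1$; hence $\sigma(J)$ is a valid tuple for the RHS. Conversely, for each valid $R$, the fibre $\{J:\sigma(J)=R\}$ is parametrized by an initial prefix $\{0,\ldots,c\}$ with $c\in\{-1,\ldots,r_1-2\}$, bounded run-lengths $l_i\in\{1,\ldots,r_{i+1}-r_i-1\}$ at each $r_i$ for $i<m$, and an unbounded run-length $l_m\geq 1$ at $r_m$. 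A direct expansion will yield the factorization
\[
\sum_{J:\,\sigma(J)=R}\prod_{j\in J}x_j = \Bigl(\prod_{i=1}^m x_{r_i}\Bigr)\, P_{r_1}(x)\prod_{i=1}^{m-1}K_{r_i,r_{i+1}}(x)\cdot T_{r_m}(x),
\]
with $P_{r_1}(x) = 1+x_0+x_0x_1+\cdots+x_0\cdots x_{r_1-2}$, $K_{r_i,r_{i+1}}(x) = 1+x_{r_i+1}+\cdots+x_{r_i+1}\cdots x_{r_{i+1}-2}$, and $T_{r_m}(x) = 1+x_{r_m+1}+x_{r_m+1}x_{r_m+2}+\cdots$; the case $R=\emptyset$ gives simply $1+x_0+x_0x_1+\cdots$.

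Next I will identify $L(\pmb\Psi_1^{-1}\prod_i A_{q^{-2r_i}}^{-1})$ with an explicit tensor product of prime simples of $\Om$. Starting from $Y_a = q\pmb\Psi_{aq^{-1}}\pmb\Psi_{aq}^{-1}$, I derive $\pmb\Psi_b = q^{-1}Y_{bq}\pmb\Psi_{bq^2}$ and $A_a^{-1} = q^{-2}\pmb\Psi_{aq^{-2}}^{-1}\pmb\Psi_{aq^2}$. Expanding $\pmb\Psi_1^{-1}\prod_i A_{q^{-2r_i}}^{-1}$ with these identities and simplifying via the cancellations $\pmb\Psi_c^{-1}\pmb\Psi_c=1$ will bring the highest $\ell$-weight, up to an overall $q$-power and $[\omega]$-twist, into the form
\[
\Bigl(\prod_{k=1}^{r_1-1}Y_{q^{2k-2r_1+1}}\Bigr)\prod_{i=1}^{m-1}\Bigl(\prod_{k=1}^{r_{i+1}-r_i-2}Y_{q^{2k-2r_{i+1}+1}}\Bigr)\,\pmb\Psi_{q^{-2(r_m+1)}}^{-1},
\]
a product of Kirillov--Reshetikhin highest weights and one negative prefundamental highest weight. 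The analog for $\Om$ of \cite[Theorem~7.9]{CABS} will then identify $L(\pmb\Psi_1^{-1}\prod_i A_{q^{-2r_i}}^{-1})$ with the tensor product
\[
W_{r_1-1,\,q^{-2r_1+3}}\otimes\bigotimes_{i=1}^{m-1}W_{r_{i+1}-r_i-2,\,q^{-2r_{i+1}+3}}\otimes L_{1,\,q^{-2(r_m+1)}}^-.
\]
By multiplicativity of $\chi_q$ on tensor products, together with the formulas $\tilde\chi_q(W_{k,a}) = \sum_{l=0}^k\prod_{i=0}^{l-1}A_{aq^{2k-2i-1}}^{-1}$ and $\tilde\chi_q(L_{1,a}^-) = \sum_{k\geq 0}\prod_{j=0}^{k-1}A_{aq^{-2j}}^{-1}$, the normalized $q$-character of this tensor product will equal $P_{r_1}\prod_i K_{r_i,r_{i+1}}\,T_{r_m}$, so that $\chi_q(L(\pmb\Psi_1^{-1}\prod_i A_{q^{-2r_i}}^{-1})) = [\pmb\Psi_1^{-1}]\sum_{J:\sigma(J)=R}\prod_{j\in J}x_j$.

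Summing over all valid $R$ and invoking the bijection $J\mapsto \sigma(J)$ of paragraph two completes the proof. The main obstacle will be in paragraph three: verifying that the displayed tensor product is simple, so it is genuinely isomorphic to $L(\pmb\Psi_1^{-1}\prod_i A_{q^{-2r_i}}^{-1})$, and carrying out the telescoping simplification of the highest $\ell$-weight with careful $q$-power and $[\omega]$-shift bookkeeping. The strict spacing $r_{i+1}>r_i+1$ in the theorem statement is precisely what ensures that the spectral parameters of the consecutive KR factors and of the tail prefundamental $L_{1,q^{-2(r_m+1)}}^-$ are pairwise non-linked, so that the Chari--Hernandez type factorization of simple modules in $\Om$ applies and the tensor product remains irreducible.
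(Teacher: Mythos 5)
Your proposal is correct and follows essentially the same route as the paper's proof: the paper likewise rewrites each $\pmb\Psi_{1}^{-1}A_{q^{-2r_{1}}}^{-1}\cdots A_{q^{-2r_{m}}}^{-1}$ as $[(-r_{m}+1)\omega_{1}]$ times a product of Kirillov--Reshetikhin highest weights and the tail $\pmb\Psi_{q^{-2r_{m}-2}}^{-1}$, invokes \citep[Theorem 7.9]{CABS} (the $q$-sets being pairwise in general position, which is exactly your non-linking condition $r_{i+1}>r_{i}+1$) to get simplicity of the tensor product, multiplies the known $q$-characters of the factors, and matches the result against the subsets $J\in\PfN$ whose maximal runs start at $0,r_{1},\ldots,r_{m}$ or $r_{1},\ldots,r_{m}$ --- precisely your fibres of $\sigma$. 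Your only organizational difference is making the run-start bijection explicit up front rather than reading it off from the product of $q$-characters at the end.
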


\begin{rem}
This formula is multiplicity-free. 
\end{rem}

\begin{proof}
Let $m\in \mathbb{N}^{*}$ and $(r_{1},r_{2},\ldots,r_{m})\in \left(N^{*}\right)^{m}$, satisfying $r_{i+1}>r_{i}+1$ for all $1\leq i \leq m$. One has
\begin{align*}
\pmb\Psi_{1}^{-1}A_{q^{-2r_{1}}}^{-1}A_{q^{-2r_{2}}}^{-1}\cdots A_{q^{-2r_{m}}}^{-1} & =  [(-r_{m}+1)\omega_{1}]\left(Y_{q^{-1}}\cdots Y_{q^{-2r_{1}+3}}\right)\left(Y_{q^{-2r_{1}-3}}\cdots Y_{q^{-2r_{2}+3}}\right) \cdots\\
& \cdots \left(Y_{q^{-2r_{m-1}-3}}\cdots Y_{q^{-2r_{m}+3}}\right) \pmb\Psi_{q^{-2r_{m}-2}}^{-1}, 
\end{align*}
The $q$-sets $\left\lbrace q^{-1}, q^{-3},\ldots, q^{-2r_{1}+3}\right\rbrace$, $\left\lbrace q^{-2r_{i}-3}, q^{-2r_{i}-5},\ldots, q^{-2r_{i+1}+3}\right\rbrace$, for $1\leq i\leq m-1$, and $\left\lbrace q^{-2r_{m}-1}, q^{-2r_{m}-3} , \ldots \right\rbrace$ are pairwise in general position. Hence, following \citep[Theorem 7.9]{CABS}, the following tensor product is simple:
\begin{equation*}
L(Y_{q^{-1}}\cdots Y_{q^{-2r_{1}+3}})\otimes L(Y_{q^{-2r_{1}-3}}\cdots Y_{q^{-2r_{2}+3}}) \otimes \cdots \otimes L(Y_{q^{-2r_{m-1}-3}}\cdots Y_{q^{-2r_{m}+3}})\otimes L(\pmb\Psi_{q^{-2r_{m}-2}}^{-1})
\end{equation*}
and of highest $\ell$-weight $\pmb\Psi_{1}^{-1}A_{q^{-2r_{1}}}^{-1}A_{q^{-2r_{2}}}^{-1}\cdots A_{q^{-2r_{m}}}^{-1}$.
Thus, 
\begin{multline*}
L(\pmb\Psi_{1}^{-1}A_{q^{-2r_{1}}}^{-1}A_{q^{-2r_{2}}}^{-1}\cdots A_{q^{-2r_{m}}}^{-1}) = [(-r_{m}+1)\omega_{1}] \otimes L(Y_{q^{-1}}\cdots Y_{q^{-2r_{1}+3}})\otimes \\
L(Y_{q^{-2r_{1}-3}}\cdots Y_{q^{-2r_{2}+3}}) \otimes \cdots \otimes L(Y_{q^{-2r_{m-1}-3}}\cdots Y_{q^{-2r_{m}+3}})\otimes L(\pmb\Psi_{q^{-2r_{m}-2}}^{-1}).
\end{multline*}
One can then compute the $q$-character of $L(\pmb\Psi_{1}^{-1}A_{q^{-2r_{1}}}^{-1}A_{q^{-2r_{2}}}^{-1}\cdots A_{q^{-2r_{m}}}^{-1})$.
\begin{align*}
[L(\pmb\Psi_{1}^{-1}A_{q^{-2r_{1}}}^{-1}A_{q^{-2r_{2}}}^{-1}\cdots A_{q^{-2r_{m}}}^{-1})] & = [(-r_{m}+1)\omega_{1}] [L(Y_{q^{-1}}\cdots Y_{q^{-2r_{1}+3}})] \\
  [L(Y_{q^{-2r_{1}-3}}\cdots Y_{q^{-2r_{2}+3}})]& \cdots [L(Y_{q^{-2r_{m-1}-3}}\cdots Y_{q^{-2r_{m}+3}})] [L(\pmb\Psi_{q^{-2r_{m}-2}}^{-1})] \\
&  = [\pmb\Psi_{1}^{-1}] \sum_{\stackrel{I\in \mathcal{P}_{f}(\mathbb{N})}{(\star)}}\prod_{j\in J}A_{q^{-2j}}^{-1}. 
\end{align*}
where in $(\star)$ we consider the finite sets of $\mathbb{N}$ with $m$  connected components, starting with $r_{1},r_{2},\ldots, r_{m}$ respectively, and with $m+1$ connected components, starting with $0,r_{1},r_{2},\ldots, r_{m}$ respectively.

Thus, 
\begin{multline*}
\sum_{m=0}^{+\infty} \sum_{\stackrel{1\leq r_{1}<r_{2}<\cdots < r_{m}}{r_{i+1}>r_{i}+1}} [L(\pmb\Psi_{1}^{-1}A_{q^{-2r_{1}}}^{-1}A_{q^{-2r_{2}}}^{-1}\cdots A_{q^{-2r_{m}}}^{-1})] \\
	= [\pmb\Psi_{1}^{-1}] \sum_{J\in \mathcal{P}_{f}(\mathbb{N})}\prod_{j\in J}A_{q^{-2j}}^{-1} = \chi_{q}(M(\pmb\Psi_{1}^{-1}))
\end{multline*}
\end{proof}

\begin{rem}
One has indeed
\begin{equation*}
\chi_{q}(M(\pmb\Psi_{1}^{-1})) \in \sum_{\pmb\Psi\leq \pmb\Psi_{1}^{-1}} \mathbb{N}[L(\pmb\Psi)],
\end{equation*}
for the partial order on $P_{\mathfrak{r}}^{\ell}$ defined in Section \ref{partordr}.
\end{rem}

		\subsection{General decomposition}\label{sectgendecomp}

Consider a negative $\ell$-weight $\pmb\Psi$, as in Section \ref{GenConst}, such that $L(\pmb\Psi)$ is in $\OmZ$. It can be written as a finite product

\begin{equation}\label{psi}
\pmb\Psi  = [\omega]\times m \times \prod \pmb\Psi_{q^{2r}}^{-v_{r}},
\end{equation}
where $\lambda\in P_{\mathbb{Q}}$, $m$ is a monomial in $\mathbb{Z}[Y_{q^{2r-1}}]_{r\in \mathbb{Z}}$ and $v_{r}\in  \mathbb{N}$.
First we need the following Lemma.
\begin{lemme}
For $\pmb\Psi^{1},\pmb\Psi^{2}$, negative $\ell$-weights as in (\ref{psi}), one has 
\begin{equation}\label{multqchar}
\chi_{q}(M(\pmb\Psi^{1}\pmb\Psi^{2})) = \chi_{q}(M(\pmb\Psi^{1}))\chi_{q}(M(\pmb\Psi^{2})).
\end{equation}
\end{lemme}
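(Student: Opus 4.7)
The plan is to reduce the multiplicativity to the fact that $\chi_q$ is a ring morphism on the Grothendieck ring of finite-dimensional representations, and then pass to the limit. By Theorem \ref{theoPsi1'}, $\chi_q(M(\pmb\Psi^i)) = [\pmb\Psi^i] \cdot \chi_{\pmb\Psi^i}^{\infty}$ for $i=1,2$, and likewise for the product. Since $[\pmb\Psi^1 \pmb\Psi^2] = [\pmb\Psi^1] \cdot [\pmb\Psi^2]$ in $\mathcal{E}_\ell$, the identity (\ref{multqchar}) reduces to establishing
$$\chi_{\pmb\Psi^1 \pmb\Psi^2}^{\infty} = \chi_{\pmb\Psi^1}^{\infty} \cdot \chi_{\pmb\Psi^2}^{\infty}.$$

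For this, I would exploit the finite-dimensional approximations. Writing each $\pmb\Psi^i$ in the form (\ref{decompPsiOm}), the Corollary following Theorem \ref{proplimqchar} provides a sequence $(S_N^i)_{N\geq 1}$ of finite-dimensional standard modules whose normalized $q$-characters converge to $\chi_{\pmb\Psi^i}^{\infty}$. Concatenating the two decompositions yields a decomposition of $\pmb\Psi^1 \pmb\Psi^2$, and the corresponding sequence $(S_N^{12})_{N\geq 1}$ has, as underlying multiset of tensor factors, the union of the factors of $S_N^1$ and $S_N^2$, possibly reordered to satisfy condition (\ref{sstar}). Since $\chi_q$ factors through the Grothendieck ring of $\mathscr{C}$ and is in particular insensitive to the order of tensor factors, this reordering has no effect, and we obtain
$$\tilde{\chi}_q(S_N^{12}) = \tilde{\chi}_q(S_N^1) \cdot \tilde{\chi}_q(S_N^2)$$
for every $N$.

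It remains to pass to the limit $N\to +\infty$ using Theorem \ref{proplimqchar}. The main thing to verify is that multiplication in the formal power series ring introduced in Remark \ref{remfps} is continuous for the pointwise-convergence topology. This is immediate: for any fixed monomial $A^{-1}_\alpha$, the coefficient of $A^{-1}_\alpha$ in a product is a finite sum indexed by decompositions $\alpha = \alpha^1 + \alpha^2$ (finitely many, since $\alpha$ has finite support), so eventual stabilization of the coefficients of the two factors forces eventual stabilization of the coefficients of the product. I do not anticipate a serious obstacle; the only mildly delicate point is that the reordering involved in forming $S_N^{12}$ generally concerns fundamental representations whose spectral parameters lie in a single $q^{\mathbb{Z}}$-orbit, so that Lemma \ref{lemtwotens} does not apply directly, but this is inconsequential at the level of $q$-characters.
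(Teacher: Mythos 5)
Your proof is correct and follows essentially the same route as the paper: the paper likewise reduces to the finite-dimensional case via the approximating sequences of standard modules from (\ref{limchiqpsi}) and then ``takes the product of the limits.'' Your write-up simply makes explicit two points the paper leaves implicit, namely the multiplicativity $\tilde{\chi}_q(S_N^{12})=\tilde{\chi}_q(S_N^1)\tilde{\chi}_q(S_N^2)$ at each finite stage and the continuity of multiplication for the pointwise topology on $\mathbb{Z}[[A_{q^{2r}}^{-1}]]$.
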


\begin{proof}
If $\pmb\Psi^{1}$ and $\pmb\Psi^{2}$ are monomials in $\mathbb{Z}[Y_{q^{2r-1}}]_{r\in \mathbb{Z}}$, then the standard modules $M(\pmb\Psi^{1})$ and $M(\pmb\Psi^{2})$ are finite dimensional and the result is known.

Else, as in (\ref{limchiqpsi}), their normalized $q$-characters are limits of normalized $q$-characters of finite dimensional standard modules:
\begin{align*}
\tilde{\chi}_{q}(M(\pmb\Psi^{i})) = \lim_{N\to+\infty} \tilde{\chi}_{q}(M(m^{i}_{N})), \forall i \in\{1,2\}.
\end{align*}
The result is obtained by taking the products of the limits.
\end{proof}

Thus, the $q$-character of the standard module $M(\pmb\Psi)$ is a product,
\begin{align*}
\chi_{q}(M(\pmb\Psi)) = [\omega]\cdot \chi_{q}(m) \cdot \prod \chi_{q}(\pmb\Psi_{a})^{-v_{a}},
\end{align*}
for which each term has a decomposition into a sum of $q$-characters of simple modules, corresponding to \textit{lower} highest $\ell$-weights. 

Finally, using the following lemma, which is straightforward from the definition of the order.
\begin{lemme}
The order on the negative $\ell$-weights is compatible with the product. More precisely, for $\pmb\Psi^{1},\pmb\Psi^{2},\pmb\Psi$ and $\pmb\Psi'$ some negative $\ell$-weights,
\begin{align*}
\left( \pmb\Psi\leq \pmb\Psi^{1} \text{ and } \pmb\Psi'\leq \pmb\Psi\pmb\Psi^{2}\right) \Rightarrow \pmb\Psi'\pmb\leq \pmb\Psi^{1}\pmb\Psi^{2}.
\end{align*}
\end{lemme}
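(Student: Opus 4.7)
The plan is to unpack the definition of the partial order recalled in Section \ref{partordr} and observe that the set of monomials in the $(A_{i,a}^{-1})_{i\in I,\, a\in\mathbb{C}^\times}$ forms a submonoid of the abelian group $P_\ell$. First I would rewrite the two hypotheses: $\pmb\Psi \leq \pmb\Psi^{1}$ means exactly that $\pmb\Psi\,(\pmb\Psi^{1})^{-1}$ is a (finite) monomial in the $A_{i,a}^{-1}$, and $\pmb\Psi' \leq \pmb\Psi\,\pmb\Psi^{2}$ means that $\pmb\Psi'\,(\pmb\Psi\,\pmb\Psi^{2})^{-1}$ is such a monomial.

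To conclude, I would use the commutativity of the product of $\ell$-weights to rewrite
\[
\pmb\Psi'\,(\pmb\Psi^{1}\pmb\Psi^{2})^{-1} \;=\; \bigl( \pmb\Psi'\,(\pmb\Psi\,\pmb\Psi^{2})^{-1} \bigr) \cdot \bigl( \pmb\Psi\,(\pmb\Psi^{1})^{-1} \bigr),
\]
where each factor on the right-hand side is by hypothesis a monomial in the $A_{i,a}^{-1}$; hence so is their product, which is exactly the definition of $\pmb\Psi' \leq \pmb\Psi^{1}\pmb\Psi^{2}$. I do not anticipate any genuine obstacle here: the lemma simply reflects that $\leq$ is the preorder induced on $P_\ell$ by the submonoid $\langle A_{i,a}^{-1}\rangle$, and the negativity assumption on the $\ell$-weights plays no role in the manipulation beyond ensuring that the products considered remain in the class of $\ell$-weights for which the statement is meaningful.
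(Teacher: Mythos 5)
Your argument is correct and is exactly the computation the paper has in mind: it states the lemma is ``straightforward from the definition of the order'' and gives no further proof, and your factorization $\pmb\Psi'(\pmb\Psi^{1}\pmb\Psi^{2})^{-1}=\bigl(\pmb\Psi'(\pmb\Psi\pmb\Psi^{2})^{-1}\bigr)\bigl(\pmb\Psi(\pmb\Psi^{1})^{-1}\bigr)$ together with closure of monomials in the $A_{i,a}^{-1}$ under products is precisely that verification.
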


One can finally conclude,
\begin{cor}
For every negative $\ell$-weight $\pmb\Psi$ such that $L(\pmb\Psi)$ is in $\OmZ$, one has
\begin{equation}\label{decominfinite}
\chi_{q}(M(\pmb\Psi)) \in \sum_{\pmb\Psi'\leq \pmb\Psi} \mathbb{N}\chi_{q}(L(\pmb\Psi')).
\end{equation}
\end{cor}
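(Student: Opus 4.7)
The plan is to combine the multiplicativity identity (\ref{multqchar}) with the decomposition results already established for each type of factor appearing in (\ref{psi}). I would start by writing $\pmb\Psi = [\omega] \cdot m \cdot \prod_{r} \pmb\Psi_{q^{2r}}^{-v_{r}}$ as in the preceding paragraph, so that (\ref{multqchar}) gives
\[
\chi_{q}(M(\pmb\Psi)) = [\omega] \cdot \chi_{q}(M(m)) \cdot \prod_{r} \chi_{q}(M(\pmb\Psi_{q^{2r}}^{-1}))^{v_{r}}.
\]
The class $[\omega]$ is itself the $q$-character of the simple module $L(\pmb\Psi_{\omega})$, so it contributes trivially to the desired expansion.

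For the finite-dimensional factor $\chi_{q}(M(m))$ with $m$ a monomial in the $Y$'s, Nakajima's classical decomposition (\ref{KLdf}) gives a sum $\sum_{m' \leq m} P_{m',m} \chi_{q}(L(m'))$ with $P_{m',m} \in \mathbb{N}$. For each factor $\chi_{q}(M(\pmb\Psi_{q^{2r}}^{-1}))$, Theorem \ref{theodecomp}, applied after a straightforward translation of spectral parameter from $1$ to $q^{2r}$, provides a decomposition with non-negative integer coefficients into $q$-characters of simple modules, each indexed by an $\ell$-weight lower than $\pmb\Psi_{q^{2r}}^{-1}$ for the partial order of Section \ref{partordr}. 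Substituting these expansions into the product, I obtain a non-negative integer combination of finite products of the form $\chi_{q}(L(\pmb\Psi^{(1)})) \cdots \chi_{q}(L(\pmb\Psi^{(k)}))$, where each $\pmb\Psi^{(i)}$ is a negative $\ell$-weight and $L(\pmb\Psi^{(i)})$ lies in $\OmZ$.

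To conclude I need to convert each such product back into a non-negative integer combination of $\chi_{q}(L(\pmb\Psi'))$. Since $\chi_{q}$ is an injective ring morphism on the Grothendieck ring of $\cO$, each product equals the class of the corresponding tensor product in $K_{0}(\OmZ)$, which by (\ref{decompGR}) expands as a sum with non-negative multiplicities on simple modules in $\OmZ$. The compatibility lemma just stated, applied iteratively, then forces every simple module appearing in the final expansion to have highest $\ell$-weight $\leq \pmb\Psi$, yielding exactly the desired inclusion (\ref{decominfinite}).

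The main subtlety I anticipate is verifying that no $\ell$-weight strictly greater than $\pmb\Psi^{(1)} \cdots \pmb\Psi^{(k)}$ can arise as the highest $\ell$-weight of a composition factor of the tensor product $L(\pmb\Psi^{(1)}) \otimes \cdots \otimes L(\pmb\Psi^{(k)})$. The non-negativity of the composition multiplicities is a formal consequence of the abelian monoidal structure of $\OmZ$, but the bound on the highest $\ell$-weights needs the standard argument that the tensor product of highest $\ell$-weight modules is itself of highest $\ell$-weight equal to the product, together with the observation that the remaining $\ell$-weights appearing are obtained by multiplying by factors of the form $A_{i,a}^{-1}$. This is exactly the content of the partial order defined in Section \ref{partordr}, so the argument closes up cleanly.
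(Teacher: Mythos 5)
Your proposal follows essentially the same route as the paper: factor $\pmb\Psi$ as in (\ref{psi}), apply the multiplicativity identity (\ref{multqchar}), decompose each factor (Nakajima's (\ref{KLdf}) for the finite-dimensional part, Theorem \ref{theodecomp} for each $\pmb\Psi_{q^{2r}}^{-1}$), and close the argument with the lemma on compatibility of the order with products. Your treatment is if anything slightly more explicit than the paper's, since you spell out the step of re-expanding products of simple $q$-characters via tensor product multiplicities in the Grothendieck ring, which the paper leaves implicit.
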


\begin{rem}
We showed that the coefficients in the decomposition (\ref{decominfinite}) are non-negative. It would be interesting to show that these coefficients can be interpreted as dimensions, which would explain their non-negativity. 
\end{rem}

\bibliographystyle{alpha}
\bibliography{article}

\flushleft{
\textsc{Université Paris-Diderot, \\
CNRS Institut de mathématiques de Jussieu-Paris Rive Gauche, UMR 7586,} \\
Bâtiment Sophie Germain, Boite Courrier 7012, \\
8 Place Aurélie Nemours - 75205 PARIS Cedex 13,\\
E-mail: \texttt{lea.bittmann@imj-prg.fr}
}

\end{document}